\newtheorem{Theorem}{Theorem}[section]
\newtheorem{Lemma}[Theorem]{Lemma}
\newtheorem{Proposition}[Theorem]{Proposition}
\newtheorem{Remark}[Theorem]{Remark}
\newtheorem{example}{Example}[section]
\newtheorem{Definition}{Definition}[section]
\numberwithin{equation}{section}
\numberwithin{figure}{section}
\begin{document}

\title[]{Monotone methods for semilinear parabolic  and elliptic equations on graphs}

\author[Y. Hu]{Yuanyang Hu$^1$}
\author[C. Lei]{Chengxia Lei$^2$}
\thanks{$^1$ School of Mathematics and Statistics,
	Henan University, Kaifeng, Henan 475004, P. R. China.}
\thanks{$^2$School of Mathematics and Statistics, Jiangsu Normal University,
	Xuzhou, 221116, Jiangsu Province, China.}

\thanks{{\bf Emails:} {\sf yuanyhu@mail.ustc.edu.cn} (Y. Hu).}
\thanks{{\bf Emails:} {\sf leichengxia001@163.com(C. Lei)}}
\thanks{Y. Hu was partially supported by National Natural Science Foundation of
	He' nan Province of China (Grant No. 222300420416) and China Postdoctoral Science Foundation (No. 2022M711045).}
\thanks{C. Lei was partially supported by NSF of China (No.11801232,11971454), the NSF of Jiangsu Province (No.BK20180999) and the Foundation of Jiangsu Normal University (No. 17XLR008).}
\date{\today}

\begin{abstract}
		This paper is devoted to investigate the extinction and propagation properties of solutions to the graph Laplacian parabolic problems with Kpp type or Allen-Cahn type forcing terms on graphs. To this end, we establish the (strong) maximum principle and the upper and lower solutions method for parabolic and elliptic problems on graphs. The stability of equilibrium solutions is studied by constructing suitable upper and lower solutions. Moreover, we give an example and numerical experiments to demonstrate one of our main results. 
\end{abstract}

\keywords{ Graph Laplacian, Maximum principle, Extinction and propagation of solutions, Stability}

\maketitle

\section{Introduction}
The evolution of species has been paied much attention by biologists during the rencent decades, see \cite{CC} and references therein. Numerous mathematical models have been proposed to study the spreading of the species through  the random population diffusion. One of the models based on the reaction-diffusion equation over the entire space  
\begin{equation}\label{fk}
	\begin{cases}
		u_{t}=\Delta u+f(u)~\text{in}~\mathbb{R}^{n}\times\mathbb{R}^{+},\\
		u(x,0)=u_0 (x)~\text{in}~\mathbb{R}^{n}\times\{ 0\},
	\end{cases}
\end{equation}
has been extensively discussed, please see \cite{AW,AWa,F,KPP,LZ} and references therein. Here the Laplacian operator $\Delta $ is used to describe the mobility of species, where the probability of moving in all directions is the same, and the nonlinear function $f(u)$ satisfies
\begin{equation}\label{11}
	f\in C^{1}[0,1],~f(0)=f(1)=0.
\end{equation}
Furthermore, either
\begin{equation}\label{52}
	f^{'}(0)>0,~f(u)>0~\text{for}~u\in(0,1),
\end{equation}
or 
\begin{equation}\label{b}
	\begin{cases}
		\text{there exists}~ \alpha\in (0,1)~ \text{such that}~f(u)<0~\text{for all}~u\in (0,\alpha),\\~f(u)>0~\text{for all}~u\in(\alpha,1),
		~f^{'}(0)<0,~\int_{0}^{1} f(u) du> 0.
	\end{cases}
\end{equation}
Following common jargon, if the nonlinear function $f(u)$ satisfies \eqref{11} and \eqref{52}, then we call that $f$ is a KPP type anf if $f$ satisfies \eqref{11} and \eqref{b}, then $f$ is said to be of Allen-Cahn type. 

The travelling waves and spreading speeds of Cauchy problems have been discussed by many authors, see \cite{FZ,W1,YZ} for more details, which are used to describe the propagation of species. Specifically, Fisher \cite{F} and Kolmogorov, Petrovsky and Piskunov \cite{KPP} studied the extinction and propagation of solutions and the spreading speed of Cauchy problem \eqref{fk}  for one dimensional space ($n=1$). Then those conclusions were extended to high dimension for Cauchy problem \eqref{fk} by Aronson and Weinberger \cite{AWa}. In particular, if $u_0$ has compact support, $f$ satisfies \eqref{52} and
$$
	\lim\limits_{u\to 0^{+}} \frac{f(u)}{u^{1+\frac{2}{n}}} >0,
$$	
then the authors (see \cite{AWa} for details) came to the conclusion that there exists $c^{*}>0$ such that 
\begin{equation*}
	\lim\limits_{t\to +\infty} \sup\limits_{0\le x< ct} \left| u(x,t)-1 \right| =0 ~\text{for any}~c\in (0,c^{*}), 
\end{equation*}
\begin{equation*}
	\lim\limits_{t\to +\infty} \sup\limits_{ ct\le x } \left| u(x,t) \right| =0 ~\text{for any}~c> c^{*}. 
\end{equation*}
Naturally, a similar result holds for $x\le 0$; if $f$ satisfies $\liminf\limits_{t\to +\infty}u(x,t) \ge l$ uniformly on all compact subset of $\mathbb{R}^{n}$ for some $l\in (0,1]$, then for any $c\in(0,c^{*})$ and any $y\in \mathbb{R}^{n}$ $\lim\limits_{t\to +\infty}\min\limits_{| x-y|\le ct}u(x,t) \ge l$.

Recently, Matano, Punzo and Tesei \cite{MPT} considered the Cauchy problem with the Laplace-Beltrami operator 
 \begin{equation}\label{h}
	\begin{cases}
		u_{t}-\Delta_{H} u= f(u)~\text{in}~H^{n}\times(0,\infty),\\
		u=u_{0}(x)~\text{in}~H^{n}\times\{0\}	
	\end{cases}
\end{equation} 
on the hyperbolic space $H^{n}(n\ge 2)$. The study of Laplace-Beltrami operator has attracted much attention for long time, see \cite{B,G,NS} for more details. 

  Matano et. al. \cite{MPT} prove results for problem \eqref{h} which are analogous to those for problem \eqref{fk}. Furthermore, they obtained some new interesting conclusions. Let us recall the two new results of \eqref{h} in \cite{MPT}.

(i) if $f$ satisfies \eqref{11}, \eqref{52}
and $\sup\limits_{u\in (0,1]} \frac{f(u)}{u}=f^{'}(0)$, then extinction prevails provided that $c_0< n-1$ and $u_0(x)$ has compact support, whereas there is propagation if $c_0> n-1$, where $c_0 =2 \sqrt{f^{'}(0)}$. However, the propagation of solutions always happen (``hair-trigger effect'') for the Euclidean case \eqref{fk} if $f$ is a KPP type. 

(ii) Suppose that $f$ satisfies \eqref{11} and \eqref{b}. Then extinction occurs if the initial data function $u_0$ is sufficiently small, respectively propagation happens if $u_0$ is sufficiently large and $c_0> n-1$.

The classical Laplacian operator $\Delta$ and the Laplace-Beltrami operator $\Delta_{H}$  are arrived by the assumption that the probability of species moving in all directions is the same. But some species move towards resource abundance, for example, migratory bird migrate for food in the winter \cite{LY}. 
Let $G=G(V,E)$ and $\bar{\Omega}=\Omega\cup \partial \Omega $ be finite connected weighted graphs (see section 2). In the light of the above factor, we consider the following discrete Cauchy problem  
 \begin{equation}\label{1pb}
	\begin{cases}
		u_{t}-\Delta_{{V}} u= f(u),~&x\in V,~t>0, \\
		u(x,0)=u_0(x),~&x\in V.
	\end{cases}
\end{equation}
on $G$ to describe the anisotropic diffusion of species, where $f$ is a Kpp type or an Allen-Cahn type and $u_0 :V \to \mathbb{R}^{+}$ is a given function.

There are many reaction-diffusion equations with initial-boundary values on bounded domains in Eulicd space used to describe the spread of species, see \cite{CC} and references therein. But in the real world, sptial discrete model is more reasonable in explaining some ecological phenomena. (for example, see e.g. \cite{LT}). Thus, we also study the following parabolic and elliptic boundary value problem
\begin{equation}\label{1p1}
	\begin{cases}
		u_{t}-\Delta_{{\Omega}} u= u(a-b u),~&x\in \Omega,~t>0, \\
		\mathcal{B}u=0,~&x\in \partial{\Omega},~t>0,\\
		u(x,0)=u_0(x),~&x\in \Omega,
	\end{cases}
\end{equation}
where $\mathcal{B}u=u$ or $\mathcal{B}u=\frac{\partial u}{\partial{\omega} n}$ (see \eqref{23}), $\Delta_{\Omega}$ is the usual graph Laplacian on $\Omega$ (see \eqref{24}) and $u_0 :\Omega\to \mathbb{R}^{+}$ is a given function.

Both \eqref{1pb} and \eqref{1p1} can be regarded as the discrete version of \eqref{fk}.
Recently, increasing efforts have been devoted to the development of analysis on graphs. In \cite{CB}, Chung and Berenstein  studied the inverse conductivity problem and proved the solvability of the Dirichlet boundary value problem and the Neumann boundary value problem on finite graphs. Lin and Wu \cite{LW} established the existence and nonexistence of global solutions for the semilinear heat equation on graphs. Huang, Lin and Yau studied the mean field equation on graphs, and they used the upper and lower solutions method to prove an existence result \cite[Theorem 2.2]{HLY}, which is consist with the conclusion of Caffarelli and Yang \cite{CY} on doubly periodic regions in $\mathbb{R}^2$. In a weighted network SIRS epidemic model, Liu and Tian \cite{LT} applied the upper and lower solutions approach to show that the disease-free equilibrium is asymptotically stable if the basic reproduction number is lower than one.  In \cite{KC1}, Kim and Chung established a comparison principle for the $p$-Laplacian on networks. As an effective method to study PDEs, the upper and lower method has been attracted much attention from a lot of researchers for many years, see \cite{BDJ,KC,P,S,W} and references therein. Motivated by the above literature, we develop the lower and upper solutions method for the following nonlinear discrete parabolic problem
with initial condition
\begin{equation}\label{34}
	\begin{cases}
		u_{t}-{\Delta_{{V}}} u=f(x,t,u),~x\in V,~t>0,\\
		u(x,0)=u_0(x)~,x\in V,
	\end{cases}
\end{equation}
 and the discrete elliptic equation
\begin{equation}\label{tc}
	-\Delta_{{V}} u-\vec{b}\cdot\nabla u+c(x)u= f(x,u),~x\in V,
\end{equation} on $G$, and
the discrete parabolic initial-boundary value problem
\begin{equation}\label{1s}
	\begin{cases}
		u_t-\Delta_{\Omega} u= f(x,t,u),~&(x,t)\in \Omega\times (0,T], \\
		\mathcal{B}u= g,~&(x,t)\in \partial \Omega\times (0,T],\\
		u(x,0)= \phi,~&x\in \Omega,		
	\end{cases}	
\end{equation}
and the following discrete elliptic boundary value problem
\begin{equation}\label{t}
	\begin{cases}
		-\Delta_{{\Omega}} u= f(x,u),~&x\in \Omega,  \\
		\mathcal{B} u= \phi(x),~&x\in \partial{\Omega},
	\end{cases}
\end{equation}
 on $\bar{\Omega}$, where $\mathcal{B}u=u$ or $\mathcal{B}u=\frac{\partial u}{\partial{\omega} n}$ (see \eqref{23}), $\Delta_{\Omega}$ is the usual graph Laplacian on $\Omega$ (see \eqref{24}) and $\phi:\Omega\to \mathbb{R}^{+}$ is a given function.

 By  the upper and lower solutions method, we obtain the existence and uniqueness of nonnegative global solutions to \eqref{1pb} and \eqref{1p1} and give the long time behavior of  solutions to \eqref{1pb} and \eqref{1p1}. We prove results for problem \eqref{1pb} which are analogous to those above for problems \eqref{fk} and \eqref{h}, yet exhibit remarkably novel features compared to the Euclidean case and the Hyperbolic case.

The rest of the paper is arranged as below. In section 2, we introduce the preliminary concepts on graphs and some properties of eigenvalues and eigenfunctions of the graph Laplacian operetors. In section 3, we establish the maximum principle for elliptic and parabolic problems on finite graphs. To this end, we prove the existence and uniqueness of solutions to a class of linear parabolic and elliptic equations on graphs. The lower and upper solutions method for some elliptic and parabolic problems is investigated in section 4.
In section 5, the monotonicity and convergence of solutions of some initial-boundary value and initial value problems on graphs are discussed. And a lot of examples are given to show how the method of upper and lower solutions can be employed.  In particular, we investigate the problems \eqref{1pb} and \eqref{1p1} and describe the long time dynamic behavior of solutions to these problems.

\section{Preliminaries}
\subsection{Some definitions on graphs}
 (see \cite{CB} for details). 
A graph is represention of a set of objects, called vertices, where some pairs of vertices are connected by links, which is called edges, and is denoted by $G=G(V,E)$ where $V$ is the set of vertices and $E$ is the set of edges, that is, $E$ consists of some couples $(x,y)$ where $x,y\in V$. We write $x\sim y$ ($x$ is connected to $y$, or $x$ is joint to $y$, or $x$ is adjacent to $y$, or $x$ is a neighbor of $y$) if $(x,y)\in E$. We denote the edge $(x,y)$ by $\overline{xy}$, and call $x$, $y$ are the endpoints of this edge. The edge $\overline{xy}$ is called a loop if it has the same endpoints (should it exist), i.e., $x=y$. A graph is called simple if it has neither loops or multiple edges.  

A graph is called undirected or unoriented, if the couples $(x,y)$ are unordered, that is, $(x,y)=(y,x)$.

For a notational convenience, we write either $x\in G$ or $x\in V$ if $x$ is a vertex in $G(V,E)$.

If the number of vertices of a graph $G$ is finite, then we say $G$ is finite.

A finite sequence $\{x_{k} \}_{k=0}^{n}$ of vertices on a graph is called a path if $x_{k}\sim x_{k+1}$ for all $k=0,1,\dots,n-1$. A graph $G=(V,E)$ is said connected if, for any two vertices $x,y\in V$, there exists a path connecting $x$ and $y$, that is, a path $\{x_{k}\}_{k=0}^{n}$ such that $x_0=x$ and $x_n= y$.

A graph $\Omega^{}=\Omega^{}(V^{'},E^{'})$ is called to be a subgraph of $G=(V,E)$ if $V^{'}\subset V$ and $E^{'}\subset E$. Then, we call $G$ a host graph of $\Omega$. If $E^{'}$ consists of all the edges from $E$ which connect the vertices of $V^{'}$ in its host graph $G$, then $\Omega$ is called an induced subgraph.

A weighted graph is a graph $G=(V,E)$ associated with a weight function $\omega:~V\times V\to [0,+\infty) $ satisfying

(i) $\omega_{x y}=\omega_{yx }$, $x,y\in V$;

(ii) $\omega_{x y}=0$ if and only if $\overline{xy}\not\in E$. 

 For a subgraph $\Omega$ of a graph $G=G(V,E)$, the (vertex) boundary $\partial{\Omega}$ of $\Omega$ is the set of all vertices $z\in V$ not in $\Omega$ but adjacent to some vertex in $\Omega$, i.e., 
\begin{equation*}
	\partial{\Omega}:=\{z\in V \backslash \Omega | z\sim y ~\text{for some}~y\in \Omega \}.
\end{equation*} 
 
 Denote $\bar{\Omega}$ a graph whose vertices and edges are in $\Omega$ and vertices in $\partial{\Omega}$.
 
 {\bfseries Unless otherwise specified, all the graphs in our concern will be finite, undirected and connected, all the subgraphs in our concern are supposed to be induced, simple, undirected and connected subgraphs of a weighted graph, and we always denote } $\bm{\Omega}$ {\bfseries be a subgraph of a host graph}  {\bfseries with boundary $\bm{\partial\Omega \not=\emptyset}$.}
 
   The (outward) normal derivative $\frac{\partial u}{\partial_{\Omega} n}(z)$ at $z\in\partial \Omega$ is defined to be 
 \begin{equation}\label{23}
 	\frac{\partial u}{\partial_{\Omega} n}(z):=\sum_{y\in {\Omega}} (u(z)-u(y))\frac{\omega_{zy}}{\mu(z)}.
 \end{equation}

Let $G=(V,E)$ be a finite graph and $\mu:V\to \mathbb{R}$ be a finite measure. The $\omega-$Laplacian $\Delta_{{V}}$ of a function $u:V\to\mathbb{R}$ on a graph $G=(V,E)$ is defined by 
\begin{equation*}
	\Delta_{\omega} u(x)=\Delta_{{V}} u(x):=\sum_{y \in V} (u(y)-u(x))\frac{\omega_{y x}}{\mu(x)},~x\in V,
\end{equation*}
and the $\omega-$Laplacian $\Delta_{\Omega}$ of $u$ on a subgraph $\Omega$ of $G$ is defined by 
\begin{equation}\label{24}
	\Delta_{\omega} u(x)=\Delta_{\Omega} u(x):=\sum_{y \in \bar{\Omega}} (u(y)-u(x))\frac{\omega_{y x}}{\mu(x)},~x\in \Omega.
\end{equation}

In what follows, for an interval $I\subset \mathbb{R}$, we say that a function $f: V\times I\to \mathbb{R}$ belong to $C^{n}(V\times I)$ if for each $x\in V$, the function $f(x,\cdot)$ is $n-$times differentiable in $I$ and $(\frac{d}{dt})^{n} f(x,\cdot)$ is continuous in $I$, and that $f\in L^{1}(V\times I)$, if for each $x\in V$, $f(x,\cdot)$ is integrable in $I$.


Denote a vector $\left( b(x)\right)_{x\in V} $ by $\vec{b}$.
The gradient  $\nabla$ of function $f$ is defined by a vector 
$$\nabla f (x):=\left(  \left[ f(y)-f(x)\right]  \sqrt{\frac{w_{xy}}{2\mu (x)}} \right)_{y\sim x} .$$

 The gradient form of $u$ reads 
\begin{equation*}
	\Gamma(u, v)(x)=\frac{1}{2 \mu(x)} \sum_{y \sim x} \omega_{x y}(u(y)-u(x))(v(y)-v(x)).
\end{equation*}
We denote the length of the gradient of $u$ by
\begin{equation*}
	|\nabla u|(x)=\sqrt{\Gamma(u,u)(x)}=\left(\frac{1}{2 \mu(x)} \sum_{y \sim x} \omega_{x y}(u(y)-u(x))^{2}\right)^{1 / 2}.
\end{equation*}

Denote, for any function $
u: V \rightarrow \mathbb{R}
$, an integral of $u$ on $V$ by $$\int \limits_{V} u d \mu=\sum\limits_{x \in V} \mu(x) u(x).$$ 

Denote  $|V|$ the number of the vertices of the graph $G=(V,E)$ and
$ \text{Vol}(V)=\sum \limits_{x \in V} \mu(x)$ the volume of $V$. For $p \ge 1$, denote $|| u ||_{L^{p}(V)}:=(\int \limits_{V} |u|^{p} d \mu)^{\frac{1}{p}}$. Define a sobolev space and a norm on it by 
\begin{equation*}
	W^{1,2}(V)=\left\{u: V \rightarrow \mathbb{R}: \int \limits_{V} \left(|\nabla u|^{2}+u^{2}\right) d \mu<+\infty\right\},
\end{equation*}
and \begin{equation*}
	\|u\|_{H^{1}(V)}=	\|u\|_{W^{1,2}(V)}=\left(\int \limits_{V}\left(|\nabla u|^{2}+u^{2}\right) d \mu\right)^{1 / 2}.
\end{equation*}

Let $T\in\{\mathbb{R},+\infty\}$, $a\lesssim T$ means that $a<T$ if $T=\infty$; $a\le T$ if $T\le \infty$.  

Let $G=(V,E)$ be a graph, denote $RV$ a set consisting of all the functions on $V$.
\subsection{The eigenvalue problem on networks} 
To solve boundary value problems on graphs, we collect some results about eigenvalues of graph Laplacian operators. For a graph $G=G(V,E)$. We can consider the function $f$ as a $N-$dimensional vector, where $N$ denotes the number of vertices of the graph. Consider in $RV$ the following inner product: for any two functions $f,g\in RV$, set $$(f,g):=\int_{V} fg d\mu.$$  In fact, $-\Delta_{{V}}$ is a positive definite 
 symmetric operator with respect to the inner product. 
By \cite{BH,C,GA}, we obtain the following facts:
\begin{Proposition}\label{20220803} There exist eigenpairs $(\mu_{i},\Psi_{i})$ $(i=0,\cdots,N-1)$ of $-\Delta_{V}$ satisfying the following properties:\\
$(1)$ $0=\mu_0<\mu_{1}\le \mu_{2} \le\cdots \le \mu_{N-1}$;\\
$(2)$ $\sum_{x \in V} \Psi_{i}(x)\Psi_{j}(x)\mu(x)=0,~0 \le i \not= j\le N-1$;\\
$(3)$ $	\sum_{x \in V} |\Psi_{i}(x)|^2 \mu(x)=1,~0\le i \le N-1$.\\
\end{Proposition}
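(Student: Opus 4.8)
The plan is to read Proposition~\ref{20220803} as the finite-dimensional spectral theorem for the operator $-\Delta_{V}$, supplemented by two graph-theoretic observations that identify the bottom of the spectrum. Since $V$ has $N$ vertices and $\mu(x)>0$ for every $x$, the space $RV$ equipped with $(f,g)=\sum_{x\in V}\mu(x)f(x)g(x)$ is an $N$-dimensional real inner product space. The first step is to record the discrete Green identity: for all $u,v\in RV$,
\begin{equation*}
	(-\Delta_{V}u,v)=\frac{1}{2}\sum_{x\in V}\sum_{y\in V}\omega_{xy}\bigl(u(y)-u(x)\bigr)\bigl(v(y)-v(x)\bigr),
\end{equation*}
which follows by expanding the definition of $\Delta_{V}$, multiplying by $\mu(x)$, and using $\omega_{xy}=\omega_{yx}$ to symmetrize the double sum. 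The right-hand side is symmetric in $u$ and $v$, so $-\Delta_{V}$ is self-adjoint with respect to $(\cdot,\cdot)$, and taking $v=u$ shows it is positive semidefinite.

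Next I would invoke the spectral theorem for a self-adjoint operator on a finite-dimensional real inner product space: there is an orthonormal basis $\{\Psi_{i}\}_{i=0}^{N-1}$ of $RV$ consisting of eigenfunctions of $-\Delta_{V}$, with real eigenvalues, which we list in increasing order as $\mu_{0}\le\mu_{1}\le\cdots\le\mu_{N-1}$. Orthonormality of this basis in $(\cdot,\cdot)$ is precisely conclusions $(2)$ and $(3)$, and positive semidefiniteness gives $\mu_{i}\ge 0$ for all $i$. (Equivalently, one may conjugate the matrix of $-\Delta_{V}$ by $\mathrm{diag}\bigl(\sqrt{\mu(x)}\bigr)_{x\in V}$ to obtain a genuinely symmetric positive semidefinite matrix and apply the usual spectral theorem for symmetric matrices.)

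It remains to establish $\mu_{0}=0<\mu_{1}$. The constant function $\mathbf{1}$ satisfies $\Delta_{V}\mathbf{1}\equiv 0$, so $0$ is an eigenvalue and hence $\mu_{0}=0$. For simplicity of the zero eigenvalue, suppose $-\Delta_{V}u=0$; then $(-\Delta_{V}u,u)=0$, so by the Green identity $\sum_{x,y}\omega_{xy}(u(y)-u(x))^{2}=0$, which forces $u(x)=u(y)$ whenever $\omega_{xy}>0$, i.e.\ whenever $x\sim y$. Since $G$ is connected, walking along a path between any two vertices shows $u$ is constant; thus the eigenspace of $0$ is one-dimensional and $\mu_{1}>0$. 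The argument is essentially routine linear algebra; the only genuinely non-algebraic point — and the main obstacle, such as it is — is this last step, where connectedness of $G$ upgrades $0\le\mu_{1}$ to $0<\mu_{1}$, together with the bookkeeping needed to place the weights $\mu(x)$ into the correct inner product so that self-adjointness actually holds.
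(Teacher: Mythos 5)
Your proof is correct. Note that the paper does not prove this proposition at all -- it is quoted as a known fact with a citation to its references on spectral graph theory -- so your argument simply supplies the standard proof that the paper delegates: the weighted Green identity makes $-\Delta_{V}$ self-adjoint and positive semidefinite on $\bigl(RV,(\cdot,\cdot)\bigr)$, the finite-dimensional spectral theorem yields the $\mu$-orthonormal eigenbasis (conclusions $(2)$ and $(3)$), constants give the eigenvalue $0$, and connectedness forces the zero eigenspace to be exactly the constants, so $\mu_{0}=0<\mu_{1}$. The only hypothesis worth making explicit is $\mu(x)>0$ for all $x$ (implicit in the paper, since $\mu(x)$ appears in denominators and the inner product must be positive definite), which you do assume; with that, every step is complete.
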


For a subgraph $\Omega$ of a host graph $\bar{\Omega}$ with a weight $\omega$ and $n=|\Omega|$. For the Dirichlet eigenvalue problem \begin{equation}\label{D}
	\begin{cases}
		-\Delta_{\Omega} u=\lambda u~\text{in}~\Omega,\\
		u=0~\text{on}~\Omega
	\end{cases}
\end{equation} and
the Neumann eigenvalue problem 
\begin{equation}\label{343,}
	\begin{cases}
		-\Delta_{\Omega} {u}(x)= K {u}(x),~x\in \Omega,\\
		\frac{\partial {u}}{\partial_{\Omega} n}	(x)  =0,~x\in \partial{\Omega}.
	\end{cases}
\end{equation}
 By \cite{BH,C,GA}, we have the following: 
\begin{Proposition}\label{20220803-1} There exist Dirichlet eigen-pairs $(\lambda_{i},\phi_{i})$ $(i=1,\cdots,n)$ of $-\Delta_{{\Omega}}$ satisfying \eqref{D} and the following properties:\\
	$(1)$ $0<\lambda_1\le\lambda_{2} \le\cdots \le \lambda_{n}$;\\
	$(2)$ $	\sum_{x \in \Omega} {\phi}_{i}(x){\phi}_{j}(x)\mu(x)=0,~1\le i \not= j\le n$;\\
	$(3)$ $	\sum_{x \in \Omega} |{\phi}_{i}(x)|^2 \mu(x)=1,~1\le i \le n$;\\
    $(4)$	$\phi_{1}(x)>0~\text{in}~\Omega.$
\end{Proposition}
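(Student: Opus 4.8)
The plan is to recognize the Dirichlet problem \eqref{D} as an eigenvalue problem for a \emph{symmetric, positive definite} linear operator on a finite-dimensional inner product space, so that (1)--(3) follow from the spectral theorem exactly as Proposition \ref{20220803} does for $-\Delta_V$, and to deduce (4) from a Perron--Frobenius / strong-maximum-principle type argument. Concretely, set $n=|\Omega|$ and let
\[
  X=\{\, u:\bar{\Omega}\to\mathbb{R}\ :\ u\equiv 0 \text{ on } \partial\Omega \,\},
\]
a vector space naturally isomorphic to $\mathbb{R}^{n}$, equipped with the inner product $\langle u,v\rangle=\sum_{x\in\Omega}\mu(x)u(x)v(x)$. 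Since $-\Delta_\Omega u(x)$ only involves, and is only imposed at, vertices $x\in\Omega$, the assignment $u\mapsto -\Delta_\Omega u$ (extended by $0$ on $\partial\Omega$) is a well-defined linear map $X\to X$.

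Next I would establish the summation-by-parts identity: enlarging the sums over $\Omega$ to sums over $\bar\Omega$ (legitimate because $u=v=0$ on $\partial\Omega$) and using $\omega_{xy}=\omega_{yx}$ gives, for all $u,v\in X$,
\[
  \langle -\Delta_\Omega u,v\rangle
  =\tfrac12\sum_{x\in\bar\Omega}\sum_{y\in\bar\Omega}\omega_{xy}\big(u(x)-u(y)\big)\big(v(x)-v(y)\big)=:\mathcal{E}(u,v),
\]
which is visibly symmetric in $u$ and $v$. Moreover $\mathcal{E}(u,u)=\tfrac12\sum_{x,y\in\bar\Omega}\omega_{xy}(u(x)-u(y))^{2}\ge 0$, and $\mathcal{E}(u,u)=0$ forces $u$ to be constant along every edge, hence constant on the connected graph $\bar\Omega$, hence $u\equiv 0$ since $\partial\Omega\neq\emptyset$ and $u|_{\partial\Omega}=0$. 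Thus $-\Delta_\Omega$ is symmetric and positive definite on $X$, and the spectral theorem yields a $\langle\cdot,\cdot\rangle$-orthonormal basis $\phi_1,\dots,\phi_n$ of $X$ of eigenfunctions whose eigenvalues, listed in increasing order, satisfy $0<\lambda_1\le\lambda_2\le\cdots\le\lambda_n$ (positivity of each $\lambda_i$ from positive definiteness). This is precisely (1)--(3).

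For (4) it suffices to exhibit \emph{one} first eigenfunction that is strictly positive on $\Omega$. Using the Rayleigh characterization $\lambda_1=\min\{\mathcal{E}(u,u):u\in X,\ \langle u,u\rangle=1\}$ together with the elementary inequality $(|a|-|b|)^{2}\le(a-b)^{2}$, one checks that if $\phi_1$ is a minimizer then so is $|\phi_1|\in X$ (the denominator is unchanged and $\mathcal{E}(|\phi_1|,|\phi_1|)\le\mathcal{E}(\phi_1,\phi_1)$), so we may take $\phi_1\ge 0$ on $\Omega$ with $\phi_1\not\equiv 0$. If $\phi_1(x_0)=0$ for some $x_0\in\Omega$, then evaluating $-\Delta_\Omega\phi_1(x_0)=\lambda_1\phi_1(x_0)=0$ and using $\phi_1(x_0)=0$ gives $\sum_{y\in\bar\Omega}\omega_{yx_0}\phi_1(y)=0$; since $\phi_1\ge 0$ and $\omega_{yx_0}\ge 0$, this forces $\phi_1(y)=0$ for every neighbour $y$ of $x_0$. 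Propagating this along paths lying inside the connected subgraph $\Omega$ (applying the eigenvalue equation successively at each vertex of such a path) yields $\phi_1\equiv 0$ on $\Omega$, a contradiction; hence $\phi_1>0$ on $\Omega$. Alternatively, in coordinates $-\Delta_\Omega=M^{-1}(D-W)$ with $M=\mathrm{diag}(\mu(x))$, $D$ the full weighted-degree diagonal and $W=(\omega_{xy})_{x,y\in\Omega}$, so that $cI-M^{-1}(D-W)$ is a nonnegative irreducible matrix for $c$ large, and Perron--Frobenius gives a positive eigenvector for its top eigenvalue, i.e.\ for $\lambda_1$.

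Everything concerning (1)--(3) is routine once symmetry and positive definiteness of $-\Delta_\Omega$ on $X$ are in hand — it is finite-dimensional linear algebra. The main obstacle is (4): the positivity/propagation argument must be run using only connectedness of $\Omega$ and nonnegativity of the weights, since the strong maximum principle that would otherwise give it at once is developed only later (Section 3); some care is also needed to verify that extending the sums to $\bar\Omega$ in the summation-by-parts identity is legitimate, i.e.\ that edges within $\partial\Omega$ and the boundary values contribute nothing.
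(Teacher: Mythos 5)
Your proof is correct. Note that the paper does not prove this proposition at all: it is stated as a known fact imported from the cited references (Bauer--Hua--Jost, Chung, Grigor'yan), so there is no in-paper argument to compare against. Your self-contained proof is the standard one and fills that gap cleanly: the summation-by-parts identity (valid because extending the sums to $\bar\Omega$ only adds terms killed by the zero boundary values), symmetry and positive definiteness of $-\Delta_\Omega$ on the space of functions vanishing on $\partial\Omega$ (positive definiteness using connectedness of $\bar\Omega$ and $\partial\Omega\neq\emptyset$), and the spectral theorem give (1)--(3); the Rayleigh-quotient truncation $\phi_1\mapsto|\phi_1|$ plus the propagation-of-zeros argument along paths in the connected graph $\Omega$, or equivalently Perron--Frobenius applied to $cI-M^{-1}(D-W)$ for large $c$, gives (4). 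Two minor points you should make explicit: first, the step ``$|\phi_1|$ is again a minimizer, hence may be taken as $\phi_1$'' tacitly uses the standard finite-dimensional fact that any minimizer of the Rayleigh quotient is an eigenfunction for $\lambda_1$ (Lagrange multipliers or expansion in the orthonormal eigenbasis); second, in the Perron--Frobenius variant one should note that $c-\lambda_1$ is indeed the spectral radius once $c\ge\lambda_n$, so the Perron eigenvector is a $\lambda_1$-eigenvector of $-\Delta_\Omega$. Neither is a gap of substance.
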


\begin{Proposition}\label{Pro1}
There exist Neumann eigen-pairs $(K_{i},\Phi_{i})$ of $-\Delta_{{\Omega}}$ satisfying \eqref{343,} and the following :\\
$(1)$ $0=K_{1}<K_{2}\le \cdots\le K_{n}$;\\
$(2)$ $\sum_{x \in \Omega} {\Phi}_{i}(x){\Phi}_{j}(x)\mu(x)=0,~1\le i \not= j\le n$;\\
$(3)$ $	\sum\limits_{x \in \Omega} |{\Phi}_{i}(x)|^2\mu(x)=1,~1\le i \le n,$.
\end{Proposition}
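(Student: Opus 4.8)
I need to prove existence of Neumann eigenpairs $(K_i, \Phi_i)$ of $-\Delta_\Omega$ satisfying the Neumann eigenvalue problem \eqref{343,}, with the spectral properties (1)–(3): $0 = K_1 < K_2 \le \cdots \le K_n$, orthogonality, and normalization.

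The plan is to recast the Neumann eigenvalue problem \eqref{343,} as an eigenvalue problem for a single symmetric, positive semi-definite linear operator on the $n$-dimensional space $\mathbb{R}^{\Omega}$ of real functions on $\Omega$, and then apply the finite-dimensional spectral theorem, exactly as in the proofs of Propositions \ref{20220803} and \ref{20220803-1}. First I would use the boundary condition to eliminate the boundary values: for $z\in\partial\Omega$ the relation $\frac{\partial u}{\partial_{\Omega}n}(z)=0$ reads $\sum_{y\in\Omega}(u(z)-u(y))\omega_{zy}=0$, and since $z$ is adjacent to some vertex of $\Omega$ we have $d_z:=\sum_{y\in\Omega}\omega_{zy}>0$, whence $u(z)=d_z^{-1}\sum_{y\in\Omega}\omega_{zy}u(y)$. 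Thus every $u\in\mathbb{R}^{\Omega}$ extends uniquely to a function $\tilde u$ on $\bar\Omega$ obeying the Neumann condition, the map $u\mapsto\tilde u$ is linear, and setting $Lu(x):=-\Delta_{\Omega}\tilde u(x)$ for $x\in\Omega$ defines a linear operator $L:\mathbb{R}^{\Omega}\to\mathbb{R}^{\Omega}$ for which \eqref{343,} is equivalent to $Lu=Ku$.

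The key step is the Green-type identity
\[
\langle Lu,v\rangle_{\mu}=\frac12\sum_{x,y\in\bar\Omega}\omega_{xy}\bigl(\tilde u(x)-\tilde u(y)\bigr)\bigl(\tilde v(x)-\tilde v(y)\bigr),\qquad u,v\in\mathbb{R}^{\Omega},
\]
where $\langle f,g\rangle_{\mu}:=\sum_{x\in\Omega}\mu(x)f(x)g(x)$ and the sum on the right runs over edges of $\bar\Omega$. To prove it I would start from $\langle Lu,v\rangle_{\mu}=\sum_{x\in\Omega}\sum_{y\in\bar\Omega}\omega_{xy}(\tilde u(x)-\tilde u(y))\tilde v(x)$, enlarge the outer summation to all $x\in\bar\Omega$, symmetrize the resulting double sum, and check that the extra terms indexed by $x\in\partial\Omega$ drop out because for such $x$ the inner sum $\sum_{y\in\bar\Omega}\omega_{xy}(\tilde u(x)-\tilde u(y))=d_x\tilde u(x)-\sum_{y\in\Omega}\omega_{xy}u(y)$ vanishes by the very definition of $\tilde u$ on $\partial\Omega$ (here one uses that every edge of $\bar\Omega$ is incident to at least one vertex of $\Omega$). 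Since $u\mapsto\tilde u$ is linear, the right-hand side is symmetric in $u$ and $v$ and non-negative for $v=u$, so $L$ is self-adjoint and positive semi-definite on $(\mathbb{R}^{\Omega},\langle\cdot,\cdot\rangle_{\mu})$.

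Everything else is then routine linear algebra. The spectral theorem provides real eigenvalues $K_1\le K_2\le\cdots\le K_n$ of $L$ together with a $\langle\cdot,\cdot\rangle_{\mu}$-orthonormal basis $\Phi_1,\dots,\Phi_n$ of eigenfunctions, which are precisely properties $(2)$ and $(3)$, while positive semi-definiteness forces $K_i\ge0$. Taking $u\equiv1$ gives $\tilde u\equiv1$ on $\bar\Omega$, hence $\langle Lu,u\rangle_{\mu}=0$ and therefore $Lu=0$; thus $0$ is an eigenvalue with constant eigenfunction, so $K_1=0$. To get $K_1<K_2$ I would observe that $Lu=0$ makes the right-hand side of the identity (with $v=u$) vanish, forcing $\tilde u(x)=\tilde u(y)$ along every edge of $\bar\Omega$; since $\bar\Omega$ is connected — each boundary vertex is joined to the connected graph $\Omega$ — this means $u$ is constant, so the kernel of $L$ is one-dimensional.

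The main obstacle is the Green-type identity of the second paragraph: one must bookkeep the boundary contributions carefully and use the Neumann extension consistently, together with the precise description of which edges belong to $\bar\Omega$. Once that identity is established, the spectral conclusions $(1)$–$(3)$ follow immediately.
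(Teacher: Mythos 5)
Your proposal is correct, but it is worth noting that the paper does not actually prove Proposition \ref{Pro1}: it simply records the statement as a known fact with a citation to \cite{BH,C,GA}. What you have written is essentially the standard argument behind those references, carried out in the paper's own notation: you eliminate the boundary values through the Neumann condition $u(z)=d_z^{-1}\sum_{y\in\Omega}\omega_{zy}u(y)$ (using $d_z>0$, which holds because every $z\in\partial\Omega$ is adjacent to $\Omega$), obtain a linear operator $L$ on the $n$-dimensional space of functions on $\Omega$, prove the Green-type identity showing $L$ is self-adjoint and positive semi-definite for the inner product $\langle f,g\rangle_\mu=\sum_{x\in\Omega}\mu(x)f(x)g(x)$, and then invoke the finite-dimensional spectral theorem; the constant function gives $K_1=0$, and connectedness of $\Omega$ (a standing assumption in the paper) makes the kernel one-dimensional, hence $K_1<K_2$. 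The two points on which your argument genuinely depends, and which you correctly flag, are (i) the convention that every edge of $\bar\Omega$ is incident to at least one vertex of $\Omega$ (no boundary-boundary edges are counted), which is exactly the Chung--Berenstein convention \cite{CB} the paper follows and without which the boundary terms in the Green identity would not cancel, and (ii) the connectedness of $\Omega$ for the simplicity of the zero eigenvalue. So the net effect of your proof is to supply a self-contained verification of a result the paper leaves to the literature; no gap.
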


Hereinafter, we shall assume that $G=G(V,E)$ ia a graph and $\Omega$ is a subgraph of a host graph $G^{'}$.
\section{The maximum principle on finite graphs}

The maximum principles for parabolic and elliptic equations on graphs are going to develop in this section. For this purpose, the existence and uniqueness of linear parabolic and elliptic problems on graphs are discussed in the following subsection.
\subsection{The existence and uniqueness of linear parabolic problems}
We show that now the solvability of the initial-boundary value problem for graphs using the method of separation of variables.

\begin{Theorem}\label{N}
	Let $\Omega$ be a subgraph of a host graph $G$ with $\partial{\Omega}\not= \emptyset$, $g:\partial\Omega\times (0,T)\to \mathbb{R}$ belong to $C^{1}(\partial{\Omega} \times(0,T))\cap L^{1}\left( \partial\Omega\times (0,T)\right) $, $\tilde{G}:\Omega\times (0,T)\to \mathbb{R}$ be a function in $C^{0}(\Omega\times(0,T))\cap L^{1}\left( \Omega\times (0,T)\right)$, $c \in \mathbb{R}$ and $u_0:\Omega \to \mathbb{R}$ be given. Then the following parabolic initial-boundary value problem
	\begin{equation}\label{N1}
		\left\{\begin{array}{l}
			u_{t}(x, t)-\Delta_{\Omega} u(x, t)+cu(x,t)=\tilde{G}(x, t), \quad x \in \Omega, \quad t \in(0, T), \\
			{\mathcal{B}u(x, t)=g(x, t), \quad x \in \partial \Omega, \quad t \in[0, T), }\\
			u(x, 0)=u_0(x), \quad x \in \Omega
		\end{array}\right.
	\end{equation}
	admits a unique solution $u(x,t)\in  C^{1}(\Omega \times(0,T))$. 
\end{Theorem}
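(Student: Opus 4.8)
\emph{Proof idea.} The plan is to turn \eqref{N1} into a finite system of linear ordinary differential equations with constant coefficients and continuous inhomogeneity, and then invoke the classical existence and uniqueness theory for such systems. Write $n=|\Omega|$ and identify a function on $\Omega$ with the vector $U(t)=(u(x,t))_{x\in\Omega}\in\mathbb{R}^{n}$. First I would use the boundary condition $\mathcal{B}u=g$ to express the boundary values $(u(z,t))_{z\in\partial\Omega}$ through $U(t)$ and the data. If $\mathcal{B}u=u$ this is immediate, $u(z,t)=g(z,t)$. If $\mathcal{B}u=\partial u/\partial_{\Omega}n$, then \eqref{23} reads
\[
\Big(\sum_{y\in\Omega}\tfrac{\omega_{zy}}{\mu(z)}\Big)u(z,t)=\sum_{y\in\Omega}\tfrac{\omega_{zy}}{\mu(z)}u(y,t)+g(z,t),\qquad z\in\partial\Omega,
\]
and because $z\in\partial\Omega$ is adjacent to at least one vertex of $\Omega$ and edge weights are positive, the coefficient $\sum_{y\in\Omega}\omega_{zy}/\mu(z)$ is strictly positive, so $u(z,t)$ is an explicit affine function of $U(t)$ plus a term linear in $g(z,t)$. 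Substituting these expressions into \eqref{24} shows that for $x\in\Omega$ one has $\Delta_{\Omega}u(x,t)=-(MU(t))(x)+r(x,t)$ with $M$ a fixed $n\times n$ matrix (depending only on $\omega$, $\mu$ and on which operator $\mathcal{B}$ is) and $r(\cdot,t)$ linear in $g(\cdot,t)$.

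Hence \eqref{N1} is equivalent to the Cauchy problem $U'(t)=AU(t)+F(t)$, $U(0)=U_0$, where $A:=-(M+cI)$ and $F(t):=\tilde G(\cdot,t)+r(\cdot,t)$. Since $\tilde G\in C^{0}\cap L^{1}$ and $g\in C^{1}\cap L^{1}$ on the respective sets, $F$ is continuous on $(0,T)$ and integrable on $(0,T)$. The variation-of-constants formula then gives the unique solution
\[
U(t)=e^{tA}U_0+\int_{0}^{t}e^{(t-s)A}F(s)\,ds,\qquad t\in[0,T);
\]
the integral is well defined because $F\in L^{1}$ near $t=0$ and $\|e^{(t-s)A}\|$ stays bounded on $0\le s\le t<T$, and it vanishes as $t\to0^{+}$, so $U$ is continuous up to $t=0$ with $U(0)=U_0$, while $U\in C^{1}((0,T);\mathbb{R}^{n})$ because $F$ is continuous there. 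Recovering $u$ on $\partial\Omega$ from the formulas above produces a solution $u\in C^{1}(\Omega\times(0,T))$ of \eqref{N1}; conversely, every solution of \eqref{N1} yields, via the same substitution, a solution of this ODE, so uniqueness for the ODE forces uniqueness for \eqref{N1}.

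The argument is essentially routine, so I do not expect a genuine obstacle; the points that need care are the bookkeeping of the boundary elimination --- in particular the strict positivity of $\sum_{y\in\Omega}\omega_{zy}$ that makes the Neumann reduction possible, and checking that $A$ and $F$ inherit the correct time regularity and integrability from $g$ and $\tilde G$. I would also note that this is exactly the ``separation of variables'' alluded to before the theorem: after subtracting a fixed function realising the boundary data (so as to reduce to $\mathcal{B}v=0$), one diagonalises $M$ using the Dirichlet eigenbasis $\{\phi_i\}$ of Proposition~\ref{20220803-1} (resp.\ the Neumann eigenbasis $\{\Phi_i\}$ of Proposition~\ref{Pro1}), the system decouples into the scalar equations $c_i'(t)+(\lambda_i+c)c_i(t)=\langle F(\cdot,t),\phi_i\rangle_{\mu}$, each integrated by an integrating factor, and summing the finite series recovers $u$.
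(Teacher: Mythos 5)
Your argument is correct, but it is executed differently from the paper. The paper first homogenizes the boundary condition --- in the Dirichlet case by subtracting $g$ on $\partial\Omega$ (the substitution \eqref{1d}), and in the Neumann case by subtracting an auxiliary function $\hat u(\cdot,t)$ obtained from a discrete Neumann elliptic problem via \cite[Theorem 3.8]{CB} --- and then expands the resulting homogeneous problem in the Dirichlet eigenbasis $\{\phi_j\}$ of Proposition \ref{20220803-1} (resp.\ the Neumann eigenbasis $\{\Phi_j\}$ of Proposition \ref{Pro1}), solving the decoupled scalar ODEs by an integrating factor and arriving at the explicit representation \eqref{dj}. You instead eliminate the boundary unknowns algebraically (for the Neumann case using that $\sum_{y\in\Omega}\omega_{zy}>0$ for $z\in\partial\Omega$, which is exactly the definition of $\partial\Omega$ together with positivity of edge weights) and reduce \eqref{N1} to a constant-coefficient linear system $U'=AU+F$ solved by the matrix exponential; this bypasses both the auxiliary elliptic problem $\hat u$ and any use of the spectral data, and it makes the existence, uniqueness and the regularity/integrability bookkeeping for $F$ near $t=0$ quite transparent (indeed your treatment of the $t\to 0^+$ limit is, if anything, more careful than the paper's). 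What you lose is the explicit eigenfunction formula: the paper's spectral representation is reused later (e.g.\ the heat-kernel expression $P(x,y,t)=\sum_j e^{-\lambda_j t}\phi_j(x)\phi_j(y)\mu(y)$ in the proof of Theorem \ref{sm} is read off from it), whereas your matrix-exponential solution would have to be diagonalized afterwards --- a step you correctly note recovers the paper's separation of variables. The only caveats, shared equally by the paper's own statement and proof, are the interpretation of the initial condition at $t=0$ for a solution only asserted to lie in $C^{1}(\Omega\times(0,T))$, and the fact that the Neumann boundary condition at $t=0$ uses $g$ only on $(0,T)$; neither is a defect of your argument relative to the paper's.
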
 
\begin{proof}
	Let $n$ denote the number of vertices of the graph $\Omega$ and $u$ be the solution of \eqref{N1}. In the case of $\mathcal{B}u(x, t)=u$.	Set 
	\begin{equation}\label{1d}
		v(x,t)=\begin{cases}
			u(x,t),~x\in \Omega,~t\in(0,T), \\
			u(x,t)-g(x,t),~x\in \partial \Omega,~t\in(0,T).	
		\end{cases}
	\end{equation}

For $x\in \Omega$, we through straightforward calculations to give  
	\begin{equation*}	
		\begin{aligned}
			\Delta_{{\Omega}} v(x,t)&= \sum\limits_{y\in \overline{\Omega} } \left(  v(y,t)-v(x,t)  \right) \frac{\omega_{yx}}{\mu(x)}  \\
			&=\sum\limits_{y\in \Omega } \left(  u(y,t)-u(x,t)  \right) \frac{\omega_{yx}}{\mu(x)} + \sum\limits_{y\in \partial{\Omega} } \left(  u(y,t)-g(y,t)- u(x,t)  \right) \frac{\omega_{yx}}{\mu(x)},\\
			&=\sum\limits_{y\in \overline{\Omega} } \left(  u(y,t)-u(x,t)  \right) \frac{\omega_{yx}}{\mu(x)} -\sum_{y\in \partial{\Omega}} g(y,t) \frac{\omega_{yx}}{\mu(x)} \\
			&=\Delta_{{\Omega}} u(x,t)-\sum_{y\in \partial{\Omega}} g(y,t) \frac{\omega_{yx}}{\mu(x)}.
		\end{aligned}
	\end{equation*}
	Then $v(x,t)$ satisfies 
	\begin{equation}\label{331}
		\left\{\begin{array}{l}
			v_{t} (x, t)-\Delta_{\Omega} v(x, t)+cv(x,t)=H(x, t), \quad x \in \Omega, \quad t \in(0, T), \\
			v(x, t)=0, \quad x \in \partial \Omega, \quad t \in[0, T), \\
			v(x, 0)=u_0(x), \quad x \in \Omega,
		\end{array}\right.
	\end{equation}
	where $H(x,t):=\tilde{G}(x,t)+\sum\limits_{y\in \partial{\Omega}} g(y,t) \frac{w_{yx}}{\mu(x)}.$
	
	Now, we consider the following expansions
	$$
v(x,t)=\sum\limits_{j=1}^{n} v_{j}(t)\phi_{j}(x),~ 
	H(x,t)=\sum\limits_{j=1}^{n} h_{j}(t)\phi_{j}(x), $$
	
	$$		g(x,t)=\sum\limits_{j=1}^{n} g_{j}(t)\phi_{j}(x),~
	u_0(x)=\sum\limits_{j=1}^{n} u_0^{j} \phi_{j}(x), 
	$$
	where 
	\begin{equation*}
		v_{j}(t)=\sum\limits_{x\in \Omega}v(x,t)\phi_{j}(x)\mu(x),~
		u_0^{j}=\sum\limits_{x\in \Omega} u_0(x)\phi_{j}(x)\mu(x),	
	\end{equation*}
	\begin{equation*}		
		h_{j}(t)=\sum\limits_{x\in \Omega} H(x,t)\phi_{j}(x)\mu(x)
		=  \sum\limits_{x\in \Omega} \left[ \tilde{G}(x,t) + \sum\limits_{y\in \partial{\Omega}} g(y,t) \frac{\omega_{yx}}{\mu(x)}  \right]  \phi_{j}(x)\mu(x)
	\end{equation*}
for $j=1,\cdots,n$.	Then we substitute the above expansions into \eqref{331} to deduce that 
	\begin{equation*}
		\sum\limits_{j=1}^{n} \left[ v_{j}^{\prime}(t)+v_{j} \lambda_{j} +cv_{j}-h_{j}(t) \right]\phi_{j}(x)=0,
	\end{equation*} 
	and
	$
	\sum\limits_{j=1}^{n} [v_{j}(0)-u_0^{j} ]\phi_{j}(x)=0
	$
for $j=1,\cdots,n$. It follows from Corollary \ref{20220803-1} (2) that
	\begin{equation*}\label{}
		\left\{\begin{array}{l}
			v_{j}^{\prime}(t)+v_{j} \lambda_{j} +cv_{j}=h_{j}(t),~t\in (0,T), \\
			v_{j}( 0)=u_0^{j}
		\end{array}\right.
	\end{equation*}
for $j=1,\cdots,n$.	Thus, we deduce that 
	$$
	v_{j}(t)=e^{-(\lambda_{j}+c)t}(u_0^{j}+\int_{0}^{t} e^{(\lambda_{j}+c)s}h_{j}(s) ds  ),~j=1,\cdots,n.
	$$
	Therefore, we see that
	$$
	v(x,t)=\sum_{j=1}^{n} e^{-(\lambda_{j}+c)t}(u_0^{j}+\int_{0}^{t} e^{(\lambda_{j}+c)s}h_{j}(s) ds  ) \phi_{j}(x),~x \in \Omega,~t\in(0,T).
	$$

Due to \eqref{1d}, we conclude that 
	\begin{equation}\label{dj}
		u(x,t)=
		\begin{cases}
			\sum\limits_{j=1}^{n} e^{-(\lambda_{j}+c)t} [u_0^{j}+\int_{0}^{t} e^{(\lambda_{j}+c)s}h_{j}(s) ds  ] \phi_{j}(x),~x \in \Omega,~t\in(0,T), \\
			\sum\limits_{j=1}^{n} e^{-(\lambda_{j}+c)t} [u_0^{j}+\int_{0}^{t} e^{(\lambda_{j}+c)s}h_{j}(s) ds  ] \phi_{j}(x)+g(x,t),~x \in \partial\Omega,~t\in(0,T).
		\end{cases}	
	\end{equation}
A simple calculation shows that the function defined by \eqref{dj} on $\bar{\Omega}\times (0,T)$ gives a solution of equation \eqref{N1}.

In the other case of $\mathcal{B}u=\frac{\partial u}{\partial_{\Omega} n}$. For the host graph $\overline{\Omega}$, let $N$ denote the number of vertices of the graph $\overline{\Omega}$ and $u:\bar{\Omega}\times(0,T)\to \mathbb{R}$ be a solution in $C^{1}(\Omega\times(0,T))\cap C^{0}(\bar{\Omega}\times(0,T))$ of \eqref{N1}. Clearly, there exists a function $I(x,t)\in C^{1}(\Omega \times (0,T))$ such that for any given $t\in(0,T)$,
	$
		\int_{\Omega} I(x,t) d\mu =\int_{\partial \Omega} g(x,t) d\mu $ (If $g\equiv 0$, we take $I\equiv 0$).
 For any fixed $t\in (0,T)$, by a similar argument as in the proof of \cite[Theorem 3.8]{CB}, we see that the following problem 
	\begin{equation*}
		\begin{cases} \Delta_{\Omega} \Phi(x)=I(x,t) & x \in \Omega, \\
			\frac{\partial \Phi}{\partial_{\Omega} n}(x)=g(x,t)  , & x \in \partial{\Omega} \end{cases}
	\end{equation*}
	admits  a solution $\hat{u}(x,t)\in C^{1}(\Omega\times (0,T))$.
	
Set
	\begin{equation*}
			{v(x,t)}:=
			u(x,t)-\hat{u}(x,t)~x\in  \bar{\Omega},~t\in [0,T).\\	
	\end{equation*}  
	By straightforward calculations, we deduce that, for $x\in \Omega$,
	\begin{equation}\label{n}
		\left\{\begin{array}{l}
			v_{t}(x, t)-\Delta_{\Omega} v(x, t)+cv(x,t)=:H(x, t), \quad x \in \Omega, \quad t \in(0, T), \\
			\frac{\partial v}{\partial_{\Omega} n}(x, t)=0, \quad x \in \partial \Omega, \quad t \in[0, T), \\
			v(x, 0)={u}_{0}(x)-\hat{u}(x,0):=\psi(x), \quad x \in \Omega,
		\end{array}\right.
	\end{equation}
	where $H(x,t):=\tilde{G}(x,t)- \hat{u}_{t}(x,t)+\Delta_{{\Omega}} \hat{u}(x,t)-c\hat{u}(x,t) .$

 Recall that $K_j$ and $\Phi_{j}(x)$ $(j=1,2,\cdots,n)$ are the eigenpairs satisfy the eigenvalue problem \eqref{343,}. We consider the following expansions
	\begin{equation*}\label{n1}
		v(x,t)=\sum\limits_{j=1}^{n} v_{j}(t)\Phi_{j}(x), 	
		H(x,t)=\sum\limits_{j=1}^{n} h_{j}(t)\Phi_{j}(x), 
		\psi(x)=\sum\limits_{j=1}^{n} \psi_{j}\Phi_{j}(x). 
	\end{equation*} 
	where 
	$$
		v_{j}(t)=\sum\limits_{x\in \Omega}v(x,t)\Phi_{j}(x)\mu(x),		
			\psi_{j}=\sum\limits_{x\in \Omega} \psi(x)\Phi_{j}(x)\mu(x),
$$
$$	h_{j}(t)=\sum\limits_{x\in \Omega} H(x,t)\Phi_{j}(x)\mu(x)
			=  \sum\limits_{x\in \Omega} \left[ G (x,t)  - \hat{u}_{t}(x,t)+\Delta_{{\Omega}} \hat{u}(x,t)-c\hat{u}(x,t)  \right]  \Phi_{j}(x)\mu(x).
	$$
	Substituting the above expansions into \eqref{n}, we deduce that 
$$
		\sum\limits_{j=1}^{n} [v_{j}^{\prime}(t)+v_{j} K_{j} +cv_{j}-h_{j}(t)]\Phi_{j}(x)=0,
\sum\limits_{j=1}^{n} [v_{j}(0)-\psi_{j} ]\Phi_{j}(x)=0.
$$

In view of the properties of $\{ \Phi_{j}\}_{j=1}^{n}$ in Proposition \ref{Pro1}, we deduce that 
$$
		v_{j}^{\prime}(t)+v_{j} K_{j} +cv_{j}-h_{j}(t)=0,~t\in(0,T),
		v_{j}(0)-\psi_{j}=0,~j=1,\dots,n.
$$
	Thus, we see that 
$$
		v_{j}(t)=e^{-(K_{j}+c)t}[\psi_{j}+\int_{0}^{t} e^{(K_{j}+c)s}h_{j}(s) ds  ],~j=1,\cdots,n.
$$
	Therefore, we conclude that
$$
		v(x,t)=\sum_{j=1}^{n} e^{-(K_{j}+c)t} [\psi_{j}+\int_{0}^{t} e^{(K_{j}+c)s}h_{j}(s) ds ]  \Phi_{j}(x),~x \in \Omega,~t\in [0,T).
$$
	It follows that 
	\begin{equation*}\label{dj22}
		u(x,t)=
			\sum\limits_{j=1}^{n} e^{-(K_{j}+c)t}[\psi_{j}+\int_{0}^{t} e^{(K_{j}+c)s}h_{j}(s) ds  ] \Phi_{j}(x)+\hat{u}(x,t),~ x \in  \bar{\Omega},~t\in[0,T). 
	\end{equation*}
	A simple calculation shows that the function $u$ defined by \eqref{dj22} on $\bar{\Omega}\times [0,T)$ gives a solution of equation \eqref{N1}.
\end{proof}

\begin{Remark}
	In Theorem \ref{N}, if we replace the assumptions that $g \in C^{1}(\partial{\Omega} \times(0,T)) \cap L^{1}\left(\partial \Omega\times (0,T)\right)$ and $\tilde{G}\in C^{0}(\Omega\times(0,T))\cap L^{1}\left( \Omega\times (0,T)\right) $ with $g \in C^{1}(\partial{\Omega} \times[0,T)) \cap L^{1}\left(\partial \Omega\times (0,T)\right)$ and $\tilde{G}\in C^{0}(\Omega\times[0,T))\cap L^{1}\left( \Omega\times (0,T)\right) $ respectively, then the regularity of the unique solution $u(x,t)$ can be increased to $u(x,t)\in C^{1}(\Omega\times [0,T) )$. 
\end{Remark}

Next, we establish the existence and uniqueness for the Cauchy problem on graphs.
\begin{Theorem}\label{3.3}
	Let $G=(V,E)$ be a graph, $M \in \mathbb{R}$, $h\in C^{0}\left( V\times [0,T)\right)\cap L^{1}\left( V\times (0,T)\right)  $ and $\psi:V \to \mathbb{R}$ be given. Then the following Cauchy problem
	\begin{equation}\label{56}
		\left\{\begin{array}{l}
			u_{t}(x, t)-\Delta_{V} u(x, t)+Mu(x,t)={h}(x, t), \quad x \in V, \quad t \in(0, T), \\
			u(x, 0)=\psi(x), \quad x \in V
		\end{array}\right.
	\end{equation}
	admits a unique solution $u(x,t)\in C^{0}(V \times[0,T))$. 
\end{Theorem}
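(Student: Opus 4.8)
The plan is to mimic the separation-of-variables argument used for Theorem \ref{N}, but now with the eigenbasis of $-\Delta_{V}$ on the whole finite vertex set $V$ furnished by Proposition \ref{20220803}. Since $V$ is finite, say $N=|V|$, there are no convergence subtleties whatsoever: every ``expansion'' below is a finite sum, i.e.\ just a change of orthonormal basis in $\mathbb{R}^{N}$.

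First I would posit, for an alleged solution $u$, the representations $u(x,t)=\sum_{j=0}^{N-1}u_{j}(t)\Psi_{j}(x)$, $h(x,t)=\sum_{j=0}^{N-1}h_{j}(t)\Psi_{j}(x)$ and $\psi(x)=\sum_{j=0}^{N-1}\psi_{j}\Psi_{j}(x)$, with coefficients $u_{j}(t)=\sum_{x\in V}u(x,t)\Psi_{j}(x)\mu(x)$, and likewise for $h_{j}(t)$ and $\psi_{j}$. Substituting into \eqref{56} and using $-\Delta_{V}\Psi_{j}=\mu_{j}\Psi_{j}$ together with the orthonormality relations in Proposition \ref{20220803}$(2)$--$(3)$, the problem decouples into the scalar linear ODEs
\begin{equation*}
u_{j}'(t)+(\mu_{j}+M)u_{j}(t)=h_{j}(t),\qquad u_{j}(0)=\psi_{j},\qquad j=0,\dots,N-1,
\end{equation*}
whose unique solution is given by the integrating-factor formula $u_{j}(t)=e^{-(\mu_{j}+M)t}\bigl(\psi_{j}+\int_{0}^{t}e^{(\mu_{j}+M)s}h_{j}(s)\,ds\bigr)$.

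For existence I would then \emph{define} $u(x,t):=\sum_{j=0}^{N-1}u_{j}(t)\Psi_{j}(x)$ with these $u_{j}$ and verify by direct substitution that it solves \eqref{56}. Since each $h_{j}$ is a finite linear combination of the continuous functions $h(x,\cdot)$ it lies in $C^{0}([0,T))$, whence each $u_{j}\in C^{1}([0,T))$ and therefore $u\in C^{1}(V\times[0,T))\subset C^{0}(V\times[0,T))$ — in fact slightly more than claimed. For uniqueness, if $u$ and $\tilde u$ both solve \eqref{56}, then $w:=u-\tilde u$ solves the homogeneous problem with $w(\cdot,0)\equiv 0$; expanding $w$ in $\{\Psi_{j}\}$ shows each coefficient satisfies $w_{j}'+(\mu_{j}+M)w_{j}=0$, $w_{j}(0)=0$, so $w_{j}\equiv 0$ and $w\equiv 0$. (Equivalently, one may simply note that on a finite graph \eqref{56} is the constant-coefficient linear system $u'=(\Delta_{V}-M\,\mathrm{Id})u+h$ in $\mathbb{R}^{N}$ and invoke the variation-of-constants formula.)

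I do not expect a genuine obstacle: this is the $\Omega=V$ analogue of the first case of Theorem \ref{N}, stripped of the boundary bookkeeping. The only points needing mild care are (i) confirming the formal expansion is legitimate, which is immediate because all sums are finite, and (ii) the regularity accounting — mere continuity of $h$ already yields a solution that is $C^{1}$ in $t$, and the hypothesis $h\in L^{1}(V\times(0,T))$ is relevant only for controlling the representation as $t\to T$, not for solvability on $[0,T)$.
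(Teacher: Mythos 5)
Your proposal is correct and follows essentially the same route as the paper: expand in the eigenbasis $\{\Psi_j\}$ of $-\Delta_V$ from Proposition \ref{20220803}, reduce \eqref{56} to the decoupled scalar ODEs $u_j'+(\mu_j+M)u_j=h_j$, $u_j(0)=\psi_j$, and recover $u$ from the integrating-factor formula, exactly as in \eqref{cj}. Your explicit uniqueness argument via the homogeneous problem is a slight (welcome) elaboration of what the paper leaves implicit, but it is not a different method.
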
 
\begin{proof}
	Let $N$ denote the number of vertices of $V$ and  $u(x,t)$ be a solution to \eqref{56}. 
We consider the following expansions 
	\begin{equation}\label{59}
		u(x,t)=\sum_{j=0}^{N-1} u_{j}(t) \Psi_{j}(x),~	
		h(x,t)=\sum_{j=0}^{N-1} h_{j}(t)\Psi_{j}(x),~	
		\psi(x)=\sum_{j=0}^{N-1} \psi_{j}(t) \Psi_{j}(x),
	\end{equation}
	where \begin{equation*}
		u_{j}(t)=\sum\limits_{x\in V} u(x,t)\Psi_{j}(x)\mu(x),~
		h_{j}(t)=\sum\limits_{x\in V} h(x,t)\Psi_{j}(x)\mu(x),~
		\psi_{j}=\sum\limits_{x\in V} \psi(x)\Psi_{j}(x)\mu(x).
	\end{equation*}  

Substituting \eqref{59} into \eqref{56}, we conclude that 
$$
		\sum\limits_{j=0}^{N-1} \left[ u_{j}^{\prime}(t)+u_{j} \mu_{j} +Mu_{j}-h_{j}(t) \right]\Psi_{j}(x)=0,
		\sum\limits_{j=0}^{N-1} [u_{j}(0)-\psi_{j} ]\Psi_{j}(x)=0.
$$
By Proposition \ref{20220803} (2), we know that 
	\begin{equation*}\label{}
		\left\{\begin{array}{l}
			u_{j}^{\prime}(t)+u_{j} \mu_{j} +Mu_{j}=h_{j}(t),~t\in (0,T), \\
			u_{j}( 0)=\psi_{j}
		\end{array}\right.
	\end{equation*}
	$\text{for}~j=0,1,\cdots,N-1.$ Thus, we obtain 
$$
		u_{j}(t)=e^{-(\mu_{j}+M)t}(\psi_{j}+\int_{0}^{t} e^{(\mu_{j}+M)s}h_{j}(s) ds  ),~j=0,1,\cdots,N-1.
$$
	Therefore, 
	\begin{equation}\label{cj}
		u(x,t)=\sum_{j=0}^{N-1} e^{-(\mu_{j}+M)t}\left[ \psi_{j}+\int_{0}^{t} e^{(\mu_{j}+M)s}h_{j}(s) ds  \right]  \Psi_{j}(x).
	\end{equation}
Furthermore, it is easy to see that $u(x,t)$ in \eqref{cj} satisfies the Cauchy problem \eqref{56}.
\end{proof}

\subsection{Maximum principle for parabolic problems}
This section develops the maximum principle for elliptic equations and parabolic equations on finite connected weighted graphs.

\subsubsection{The parabolic equation with initial-boundary value}

This subsection is devoted to establish the maximum principle for parabolic equations with boundary conditions on graphs.
\begin{Theorem}\label{max} {\rm (Maximum principle)}
	Let $\Omega$ be a subgraph of a host graph $G=(V,E)$ with $\partial\Omega \not= \emptyset$. For any $T>0$, we assume that $v(x,t)$ $\in C^{1}(\Omega\times(0,T] ) \cap C^{0}(\Omega\times [0,T])$ satisfies 
	\begin{equation*}
		\begin{cases} v_{t}(x,t )-\Delta_{\Omega} v(x,t)-k(x,t ) v(x,t ) \geq 0, & (x,t) \in \Omega \times (0, T] , \\ v(x,0) \geq 0, & x \in \Omega, \\ \mathcal{A}v( x,t) \geq 0, & ( x,t) \in  \partial \Omega \times[0, T], \end{cases}
	\end{equation*}
where $k(x,t)$ is bounded on $\Omega \times (0,T]$ and $\mathcal{A}v=\alpha(x,t) v+\beta(x,t)\frac{\partial v}{\partial_{\Omega} n}$, $\alpha\ge 0 $, $\beta\ge 0$, and $\alpha+\beta>0$ on $\Omega\times [0,T]$. Then
\begin{equation}\label{1,}
	v(x,t)\ge 0~ \text{on}~ \bar{\Omega}\times[0,T] .
\end{equation} 
\end{Theorem}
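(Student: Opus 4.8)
The plan is to exploit the finiteness of the graph: at each fixed time, the spatial variable ranges over a finite set, so ``finding where the minimum is attained'' is legitimate and no compactness-in-$x$ is needed; the only continuum issue is the time variable on $[0,T]$. First I would handle the interior/Dirichlet-flavoured part of the argument by the classical exponential-shift trick. Set $w(x,t)=e^{-\lambda t}v(x,t)$ for a constant $\lambda$ chosen larger than $\sup_{\Omega\times(0,T]}k(x,t)$ (possible since $k$ is bounded); then $w$ satisfies $w_t-\Delta_\Omega w-(k(x,t)-\lambda)w\ge 0$ with $k-\lambda<0$, and $w$ has the same sign as $v$, so it suffices to prove $w\ge 0$. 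The point of the shift is that the zero-order coefficient is now strictly negative, which forbids a negative interior minimum.

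Next I would run a contradiction/continuity argument in $t$. Suppose $w$ is negative somewhere on $\bar\Omega\times[0,T]$; since $\bar\Omega$ is finite and $w$ is continuous in $t$, the quantity $m(t)=\min_{x\in\bar\Omega}w(x,t)$ is continuous, $m(0)\ge 0$, and there is a first time $t_0\in(0,T]$ with $m(t_0)=0$ would be the borderline case, so instead let $t_0=\inf\{t: m(t)<0\}$ and work just to the right, or more cleanly fix the first time $t_0$ at which the minimum over $\bar\Omega\times[0,t_0]$ is negative and pick $x_0$ achieving it. At such an interior minimizer $x_0\in\Omega$ one has, from the definition of $\Delta_\Omega$, that $\Delta_\Omega w(x_0,t_0)=\sum_{y\in\bar\Omega}(w(y,t_0)-w(x_0,t_0))\frac{\omega_{yx_0}}{\mu(x_0)}\ge 0$ (each bracket is $\ge0$ since $x_0$ is a global spatial minimizer), while $w_t(x_0,t_0)\le 0$ (it is where $w$ dips below zero, coming from nonnegative values), and $-(k-\lambda)w(x_0,t_0)<0$ because $w(x_0,t_0)<0$ and $k-\lambda<0$. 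Summing gives $w_t-\Delta_\Omega w-(k-\lambda)w<0$ at $(x_0,t_0)$, contradicting the differential inequality. The only subtlety is ensuring $x_0\in\Omega$ rather than $\partial\Omega$; this is where the boundary condition enters: if the minimizer were a boundary vertex $z$ with $w(z,t_0)<0$, then $\alpha(z,t_0)w(z,t_0)\le 0$ and $\frac{\partial w}{\partial_\Omega n}(z,t_0)=\sum_{y\in\Omega}(w(z,t_0)-w(y,t_0))\frac{\omega_{zy}}{\mu(z)}\le 0$ (again since $z$ is a global minimizer), so $\mathcal Aw(z,t_0)=\alpha w+\beta\frac{\partial w}{\partial_\Omega n}\le 0$; combined with $\mathcal Av\ge 0$ on the boundary (hence $\mathcal Aw\ge0$) this forces $\mathcal Aw(z,t_0)=0$, and since $\alpha+\beta>0$ one deduces that either $w(z,t_0)=0$ (contradicting negativity) or all neighbours $y\in\Omega$ also attain the negative minimum, pushing the minimizer into $\Omega$ and reducing to the interior case.

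I would organize this as: (1) reduce to $w=e^{-\lambda t}v$ with strictly negative zero-order term; (2) observe $m(t)=\min_{\bar\Omega}w(\cdot,t)$ is continuous, define $t_0$ as the first time the running minimum over $\bar\Omega\times[0,t_0]$ hits a negative value (use that $m(0)\ge0$ and continuity to get $t_0>0$ and $m(t_0)=$ the negative minimum value, attained at some $x_0$); (3) rule out $x_0\in\partial\Omega$ via the Robin-type computation above; (4) derive the contradiction at the interior minimizer using $\Delta_\Omega w(x_0,t_0)\ge0$, $w_t(x_0,t_0)\le0$, and the sign of the zero-order term; (5) conclude $w\ge0$, hence $v\ge0$ on $\bar\Omega\times[0,T]$, which is \eqref{1,}.

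The main obstacle I anticipate is the bookkeeping at step (2)--(3): making the ``first time'' argument rigorous when the negative minimum might first be attained only in a limit, and carefully arguing that the boundary minimizer case propagates to an interior minimizer (using connectedness of $\Omega$ together with the weights $\omega_{zy}>0$ for $z\sim y$). A clean way around the limit issue is to prove the strict version first: assume strict inequalities ($w_t-\Delta_\Omega w-(k-\lambda)w>0$, $w(\cdot,0)>0$, $\mathcal Aw>0$) and get $w>0$ directly by the minimizer argument with no borderline case; then recover the non-strict statement by applying the strict result to $w+\epsilon(1+t)$ (or $w_\epsilon=w+\epsilon e^{\sigma t}$ for suitable $\sigma$) and letting $\epsilon\downarrow0$, checking that the added term preserves all three inequalities strictly. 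Everything else is a routine consequence of the explicit form of $\Delta_\Omega$ and $\frac{\partial}{\partial_\Omega n}$ on a finite graph.
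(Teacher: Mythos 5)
Your proposal is correct and follows essentially the same route as the paper: an exponential shift (the paper's $\psi=ve^{-lt}$ with $l>\sup k$) to make the zero-order coefficient strictly negative, followed by a minimum-point argument that uses the explicit formulas for $\Delta_\Omega$ and $\frac{\partial}{\partial_\Omega n}$, with the Robin condition $\alpha\ge0$, $\beta\ge0$, $\alpha+\beta>0$ ruling out negative boundary minima. The only difference is organizational: the paper avoids your ``first time''/$\epsilon$-perturbation bookkeeping by simply taking the global minimum of $\psi$ over $\Omega\times[0,T]$ (interior vertices only, where the assumed continuity guarantees attainment) and then treating boundary vertices pointwise at each fixed time.
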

\begin{proof}	
	Set $$\mathcal{L}v(x,t)=v_{t}(x,t)-\Delta_{\Omega} v(x,t)-k(x,t)v(x,t).$$ It's clear that $\mathcal{L} v\ge 0~\text{in}\;\Omega\times (0,T].$ As $k(x,t)$ is bounded in $\Omega\times (0,T]$, we can find a positive number $l$ such that $l>k(x,t)$ for all $(x,t)\in \Omega\times (0,T]$. 
	
Define $\psi=ve^{-lt}$. It gives that 
	 \begin{equation*}
	 	\begin{aligned}
	 		\mathcal{L} v(x,t)&=v_{t}(x,t)-\Delta_{\Omega} v(x,t)-k(x,t)v(x,t) \\
	 		&=e^{lt}(\psi_{t}-\Delta_{\Omega} \psi-(k-l)\psi)	\\
	 		&=:e^{lt}\tilde{\mathcal{L}} \psi(x,t).
	 	\end{aligned} 
	 \end{equation*}
Recall that $v(x,t)$ belongs to $C^{0}(\Omega \times[0,T])$, then we have ${\psi(x,t)\in C^{0}(\Omega \times [0,T])}$. Therefore, we can find $(x_0 , t_0)\in \Omega\times[0,T]$ so that 
 \begin{equation}\label{m}
 \psi(x_0 , t_0)=\min\limits_{\Omega\times [0,T]} \psi.
 \end{equation} 
 
 Next, we assert that $\psi(x_0 , t_0)\ge 0$. Suppose by way of contradiction that $\psi(x_0 , t_0)<0$. 
 If $t_0=0$, it's clear that $\psi(x_0 , t_0)=v(x_0,0) < 0$. This is a contradiction.
 For $t_0 \in (0,T]$, then we use $k(x_0 , t_0)<l$ and $\psi(x_0 , t_0)<0$ to obtain that 
 \begin{equation*}
 	\tilde{\mathcal{L}} \psi(x_0 , t_0)<\psi_{t}(x_0 , t_0)-\Delta_{\Omega} \psi (x_0 , t_0).
 \end{equation*} 
It's obviously that $\psi_{t}(x_0 , t_0)\le 0$. Then, we have 
\begin{equation}\label{3}
	\tilde{\mathcal{L}} \psi (x_0 , t_0) < -\Delta_{\Omega} \psi (x_0 , t_0).
\end{equation}

For any $y\in \partial \Omega$, we use
\begin{equation*}
	\left( \alpha \psi+\beta\frac{\partial \psi}{\partial_{\Omega} n} \right) \bigg| _{(y,t_0)} =e^{-lt_0}\left( \alpha v+\beta\frac{\partial v}{\partial_{\Omega} n} \right) \bigg| _{(y,t_0)}  \ge 0,
\end{equation*}
to conclude that $\psi(y,t_0)\ge \psi (x_0, t_0)$ for all $y\in \partial {\Omega}$. This implies that 
\begin{equation}\label{4}
	\Delta_{\Omega} \psi\left(x_{0}, t_{0} \right)=\frac{1}{\mu\left(x_{0}\right)} \sum_{y:y \sim x_{0}} \omega_{x_{0} y}\left(\psi\left(y,t_{0} \right)-\psi\left( x_{0},t_{0}\right)\right) \geq 0.
\end{equation}
Note that $\mathcal{L}v\ge 0$ in $\Omega\times (0,T]$, we see that $\tilde{\mathcal{L}} \psi \ge 0$ in $\Omega\times (0,T]$.
On the other hand, it follows from \eqref{3} and \eqref{4} that 
$$\tilde{\mathcal{L}} \psi(x_0 , t_0) <0. $$ This is impossible.
Thus, we can conclude that $\psi(x_0,t_0)\ge 0$ and hence that $\psi(x,t)\ge 0$ for $x\in \Omega$ and $t\in(0,T]$ by \eqref{m}. 

If there exists $(x_1,t_1) \in \partial \Omega \times [0,T] $ such that $\psi(x_1,t_1)<0$,
then we have 
\begin{equation*}
\left( 	\alpha\psi+\beta\frac{\partial \psi}{\partial_{\Omega} n} \right)  (x_1,t_1)=\left( \alpha\psi\right) (x_1,t_1)+\beta(x_1, t_1) \frac{1}{\mu(x_1)} \sum_{z \in \Omega}(\psi (x_1,t_1)-\psi (z,t_1)) \omega_{x_1 z} < 0,
\end{equation*}
which is a contradiction. Thus, we obtain ${u(x,t)\ge 0}$ for $x\in \bar{\Omega}$ and $t\in [0,T]$.
\end{proof}

\begin{Theorem}\label{sm}
	{\rm (Strong maximum principle)} Suppose that $u\in C^{1}({\Omega}\times (0,T])\cap C^{0}(\bar{\Omega}\times [0,T])$ satisfies 
	\begin{equation*}
		\begin{cases}u_{t}-\Delta_{\Omega} u+k(x,t)u\ge 0, & (x, t) \in \Omega \times(0, T], \\ 
		\mathcal{B}u(x, t)\ge0, & (x, t) \in \partial \Omega \times(0, T], \\
		 u(x, 0)=u_{0}(x) \geqslant, \not\equiv 0, & x \in \Omega. \end{cases}
	\end{equation*} 
	and $k(x,t)$ is bounded function for $x\in \Omega$ and $t\in [0,T]$, where $\mathcal{B}u=u$ or $\mathcal{B}u=\frac{\partial u}{\partial_{\Omega} n}$. Then if $\mathcal{B}u=u$, then $u(x,t)> 0$ for $x\in {\Omega}$ and $t\in (0,T]$ and if $\mathcal{B}u=\frac{\partial u}{\partial_{\Omega} n}$, then $u(x,t)> 0$ for $x\in \bar{\Omega}$ and $t\in (0,T]$.
\end{Theorem}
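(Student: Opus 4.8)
The plan is to first reduce to a nonnegativity statement and then boost it to strict positivity by reading the differential inequality, vertex by vertex, as a scalar linear ODE inequality whose only coupling to the rest of the graph is through nonnegative neighbour terms. Since $k$ is bounded, I would begin by applying the maximum principle, Theorem~\ref{max}, with $k$ replaced by $-k$ and with $(\alpha,\beta)=(1,0)$ when $\mathcal{B}u=u$, respectively $(\alpha,\beta)=(0,1)$ when $\mathcal{B}u=\frac{\partial u}{\partial_{\Omega} n}$ (using continuity of $u$ on $\bar\Omega\times[0,T]$ to extend the boundary inequality down to $t=0$); this gives $u\ge0$ on $\bar\Omega\times[0,T]$.

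Next, fix a vertex $x\in\Omega$ and put $g(x,t):=u_t(x,t)-\Delta_\Omega u(x,t)+k(x,t)u(x,t)\ge0$. Expanding $\Delta_\Omega$ and isolating the diagonal term, the inequality takes the form
\[
 u_t(x,t)+m(x,t)\,u(x,t)=\sum_{y\in\bar\Omega}u(y,t)\frac{\omega_{yx}}{\mu(x)}+g(x,t)=:F(x,t),
\]
with $m(x,t):=k(x,t)+\sum_{y\in\bar\Omega}\frac{\omega_{yx}}{\mu(x)}$ bounded and, by the nonnegativity just established, $F(x,\cdot)\ge0$ on $(0,T]$. Multiplying by $E_x(t):=\exp\big(\int_0^t m(x,s)\,ds\big)$ makes $t\mapsto u(x,t)E_x(t)$ continuous on $[0,T]$, $C^1$ on $(0,T]$, and nondecreasing (its derivative $F(x,\cdot)E_x$ is $\ge0$); a limiting form of the fundamental theorem of calculus (letting the lower limit tend to $0$, which is legitimate by monotonicity) then yields
\[
 u(x,t)\,E_x(t)=u_0(x)+\int_0^t F(x,s)\,E_x(s)\,ds,\qquad t\in[0,T].
\]
In particular $u(x,t)\ge u_0(x)\,E_x(t)^{-1}$, so $u(x,t)>0$ on $[0,T]$ whenever $u_0(x)>0$; and, crucially, $u(x,t)>0$ as soon as some neighbour $y\sim x$ has $u(y,s)>0$ at one point $s\in(0,t]$, since then $F(x,\cdot)\ge u(y,\cdot)\,\omega_{yx}/\mu(x)$ is strictly positive on a subinterval and the integral is strictly positive.

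To conclude, because $u_0\ge0$ and $u_0\not\equiv0$ I would pick $x_*\in\Omega$ with $u_0(x_*)>0$, so $u(x_*,\cdot)>0$ on $[0,T]$, and then propagate by induction on the graph distance to $x_*$ inside $\Omega$: if $u(y,\cdot)>0$ on $(0,T]$ for all $y$ at distance $\le j$ and $x\in\Omega$ has distance $j+1$, then $x$ has a neighbour $y\in\Omega$ at distance $j$ with $\omega_{yx}>0$, and the observation above gives $u(x,t)>0$ for all $t\in(0,T]$. Since $\Omega$ is finite and connected (a standing assumption), this reaches every vertex of $\Omega$, which is the assertion when $\mathcal{B}u=u$. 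In the case $\mathcal{B}u=\frac{\partial u}{\partial_{\Omega} n}$ there remains the boundary: for $z\in\partial\Omega$, choose $x_0\in\Omega$ with $z\sim x_0$; then $\sum_{y\in\Omega}(u(z,t)-u(y,t))\omega_{zy}/\mu(z)\ge0$ rearranges to $u(z,t)\ge\big(\sum_{y\in\Omega}\omega_{zy}\big)^{-1}\sum_{y\in\Omega}u(y,t)\omega_{zy}\ge\big(\sum_{y\in\Omega}\omega_{zy}\big)^{-1}u(x_0,t)\,\omega_{zx_0}>0$ for $t\in(0,T]$.

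All the computations here are routine; the places that deserve care are matching the sign and boundary-operator conventions in order to invoke Theorem~\ref{max}, justifying the integral identity up to $t=0$ when $u$ is merely continuous there, and making sure the propagation step never leaves $\Omega$, which is precisely where connectedness of the subgraph $\Omega$ enters. An equivalent, slightly slicker route skips the integral representation: if $u(x_1,t_1)=0$ at an interior point with $t_1\in(0,T]$, then $u_t(x_1,t_1)\le0$ together with $u(\cdot,t_1)\ge0$ force $\Delta_\Omega u(x_1,t_1)=0$, hence $u(y,t_1)=0$ for every $y\sim x_1$; iterating through the connected $\Omega$ gives $u(\cdot,t_1)\equiv0$ on $\bar\Omega$, and then the integrating-factor monotonicity forces $u_0\equiv0$ on $\Omega$, contradicting the hypothesis.
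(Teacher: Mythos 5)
Your proposal is correct, but it takes a genuinely different route from the paper's. Both arguments begin the same way, invoking Theorem~\ref{max} (with $k$ replaced by $-k$, and with the boundary inequality extended to $t=0$ by continuity) to get $u\ge 0$ on $\bar{\Omega}\times[0,T]$. From there the paper proceeds globally: it compares $u$ with the solution $v$ of the linear problem $v_t-\Delta_{\Omega}v+\bar{k}v=0$, $\mathcal{B}v=0$, $v(\cdot,0)=u_0$ supplied by Theorem~\ref{N}, writes $e^{\bar{k}t}v$ via the Dirichlet (resp.\ Neumann) eigenfunction expansion, and proves strict positivity of the resulting kernel $P$ (resp.\ $Q$) by a contradiction argument that exploits connectedness of $\Omega$; positivity of $u$ then follows from $u\ge v$, and the Neumann boundary vertices are treated at the end exactly as you treat them. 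You avoid the solvability theory and the spectral expansion entirely: after the nonnegativity step you read the inequality vertex by vertex as a scalar ODE inequality, use an integrating factor to obtain $u(x,t)E_x(t)=u_0(x)+\int_0^t F(x,s)E_x(s)\,ds$ with $F\ge0$, and propagate strict positivity along edges by induction on the graph distance inside the connected subgraph $\Omega$ (your ``slicker'' variant at the end is essentially the same connectedness contradiction the paper applies to its kernel, transplanted to $u$ itself). Your route is more elementary and self-contained, and it makes transparent exactly where connectedness and $u_0\not\equiv 0$ enter; the paper's route, at the cost of invoking Theorem~\ref{N}, yields as a byproduct the explicit kernel representation and its strict positivity, whose whole-graph analogue it reuses later (proof of Theorem~\ref{smc} and Section~5).

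One small point to tighten: the theorem assumes only that $k$ is bounded, so $E_x(t)=\exp\bigl(\int_0^t m(x,s)\,ds\bigr)$ need not be well defined or differentiable without some regularity of $k(x,\cdot)$. Since you already know $u\ge0$, simply replace $m(x,t)$ by the constant $\bar{m}:=\sup|k|+\max_{x\in\Omega}\sum_{y\in\bar{\Omega}}\frac{\omega_{yx}}{\mu(x)}$; then $u_t+\bar{m}u\ge F\ge0$ still holds, the integrating factor $e^{\bar{m}t}$ is smooth, and the rest of your argument (including the limit of the lower integration endpoint to $0$, justified by monotone convergence of a nonnegative integrand) goes through verbatim. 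This parallels the paper's own use of a constant bound $\bar{k}$.
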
 
 \begin{proof}
Using Theorem \ref{max}, we obtain $u(x,t)\ge 0$ on $\bar{\Omega} \times [0,T]$. As the boundness of $k(x,t)$, we can find a constant $\bar{k}>0$ such that $k\le\bar{k}$. It follows that 
 $$
 			0\le u_{t}-\Delta_{\Omega} u +ku 
 			\le u_{t}-\Delta_{\Omega} u+\bar{k}u 
$$
in $\Omega \times [0,T]$.
By Theorems \ref{N} and \ref{max}, the problem
 		\begin{equation*}
 		\begin{cases}v_{t}-\Delta_{\Omega} v+\bar{k}v= 0, & (x, t) \in \Omega \times(0, T], \\ \mathcal{B}v(x, t)=0, & (x, t) \in \partial \Omega \times[0, T], \\ v(x, 0)=u_{0}(x), & x \in \Omega \end{cases}
 	\end{equation*}
 admits a unique solution $v\ge 0$ on $\bar{\Omega}\times (0,T]$. It follows that 
 \begin{equation*}
 	\begin{aligned}
 		 v_{t}-\Delta_{\Omega} v +kv
 		&\le v_{t}-\Delta_{\Omega} v+\bar{k}v=0 ~\text{in}~\Omega \times [0,T].
 	\end{aligned}
 \end{equation*}
 
Setting $W=u-v$, we find that $W$ satisfies
\begin{equation*}
	\begin{cases}W_{t}-\Delta_{\Omega} W+\bar{k} W\ge 0, & (x, t) \in \Omega \times(0, T], \\ \mathcal{B}W(x, t)=0, & (x, t) \in \partial \Omega \times[0, T], \\ W(x, 0)=0, & x \in \Omega. \end{cases}
\end{equation*}
Then we apply Theorem \ref{max} to conclude that $W\ge 0$ on $\bar{\Omega}\times [0,T]$. It's easily seen that $u\ge v$ on $\bar{\Omega}\times [0,T]$.

Consider the transformation $w=e^{\bar{k} t} v$,  then $w$ satisfies
 \begin{equation*}
 	\begin{cases}w_{t}-\Delta_{\Omega} w= 0, & (x, t) \in \Omega \times(0, T], \\ \mathcal{B}w(x, t)=0, & (x, t) \in \partial \Omega \times[0, T], \\ w(x, 0)=u_{0}(x) \geqslant, \not\equiv 0, & x \in \Omega. \end{cases}
 \end{equation*}
Denote $n=|\Omega|$. For the Dirichlet boundary $\mathcal{B}u=u$,
it follows from Theorem \ref{N} that
\begin{equation}\label{452,}
	w(x,t)=\sum\limits_{y\in \Omega}P(x,y,t)u_{0}(y)~\text{for}~x\in\bar{\Omega}~\text{and}~t\in[0,T],
\end{equation}
where  $P(x,y,t)=\sum\limits_{j=1}^{n}e^{-\lambda_{j}t}\phi_{j}(x)\phi_{j}(y)\mu(y)$. It is easy to check that $$P_{t}=\Delta_{\Omega,x}P,$$ where 
$
	\Delta_{\Omega,x} P(x,y,t)=\sum_{z \in \bar{\Omega}}[P(z,y,t)-P(x,y,t)]\frac{\omega_{z x}}{\mu(x)}.
$
Combining Theorem \ref{max} and 
 $$P(x,y,0)=\delta_{x}(y)=\begin{cases}
	1,x=y,\\0,x\not = y
\end{cases}\text{for}\;x,y\in \Omega,$$
we have  $P(x,y,t)\ge0$ for $t\in[0,T]$ and $x,y\in \Omega$. 

Now,we shall show 
\begin{equation}\label{22,}
	P(x,y,t)>0~\text{ for} ~x,y\in \Omega~\text{ and}~ 0<t\le T
\end{equation}
by using proof by contradiction. Otherwise, there exists $x_0,y_0\in \Omega$ and $t_0\in(0,T]$ such that $P(x_0,y_0,t_0)=0$. It's easily seen that  
$$0=P_{t}(x_0 ,y_0,t_0)=\Delta_{\Omega,x} P(x_0,y_0,t_0)\ge 0.$$ 
and hence $P(x,y_0,t_0)=P(x_0,y_0,t_0)=0$ $\forall x\in \Omega:x\sim x_0$. Note that
$\Omega$ is connected, for any $z\in \Omega$, we can find $x_j (j=1,\cdots,n)$ such that $x_1\sim x_2\sim\cdots\sim x_n\sim z$. It now follows from the discussion inductively that $P(z,y_0,t_0)=0$, which implies that 
$$P(y_0,y_0,t_0)=\sum\limits_{j=1}^{n}e^{-\lambda_{j}t_0}\phi_{j}^{2}(y_0)\mu(y_0)=0.$$ Therefore, we see that 
$\sum\limits_{j=1}^{n}\phi_{j}^{2}(y_0)\mu(y_0)=0$ which is a contradiction with Corollary \ref{20220803-1} (4).
Combining \eqref{452,}, \eqref{22,} and $u_{0}\ge,\not\equiv 0$, we obtain
$w(x,t)>0$ for $x\in \Omega$ and $t\in (0,T]$. It follows that $v(x,t)>0$ for $x\in \Omega$ and $t\in (0,T]$. Recalling that $u\ge v$ on $\Omega\times [0,T]$, we see that $u(x,t)>0$ for $x\in \Omega$ and $t\in (0,T]$.

Next, we remain to prove the conclusion holds with the Neumman boundary $\mathcal{B}u=\frac{\partial u}{\partial_{\Omega} n}.$ 
Applying Theorem \ref{N}, we see that 
\begin{equation*}\label{452,,}
	w(x,t)=\sum\limits_{y\in \Omega}Q(t,x,y)u_{0}(y),
\end{equation*}
where  $Q(t,x,y)=\sum\limits_{j=1}^{n}e^{-K_{j}t}\Phi_{j}(x)\Phi_{j}(y)\mu(y)$. It is easy to check that $$Q_{t}=\Delta_{\Omega,x}Q,$$ where 
$
\Delta_{\Omega,x} Q(t,x,y)=\sum_{z \in \bar{\Omega}}[Q(t,z,y)-Q(t,x,y)]\frac{\omega_{z x}}{\mu(x)}.
$
Due to $$Q(0,x,y)=\delta_{x}(y)=\begin{cases}
	1,x=y,\\0,x\not = y,
\end{cases}\text {for}\;x,y\in \Omega,$$
we use the similar approach to that in the case that $\mathcal{B}u=u$ to deduce that $Q(t,x,y)>0$ for $x,y\in \Omega$ and $0<t\le T$. It follows that $w(x,t)>0$ for $x\in \Omega$ and $t\in (0,T]$ which implies that 
\begin{equation}\label{z1}
	u(x,t)>0~\text{ on} ~\Omega \times (0,T].
\end{equation}

If there exists $x_0 \in \partial{\Omega}$ and $t_0 \in (0,T]$ such that $u(x_0, t_0)=0$, then 
\begin{equation*}
	0=\frac{\partial u}{\partial_{\Omega} n}(x_0, t_0)=\sum_{y \in \bar{\Omega}} (u(x_0 , t_0)-u(y, t_0))\frac{\omega_{x_{0} y}}{\mu(x_0)} \le 0.
\end{equation*}
Therefore, we have 
\begin{equation*}
	u(y,t_0)=u(x_0, t_0)=0~\forall y\in {\Omega}:~y\sim x_0,
\end{equation*}
which contradicts to \eqref{z1}. Thus, we obtain
$$u(x,t)>0~\text{for}~x\in\bar{\Omega}~\text{and}~t\in(0,T].$$
 \end{proof}

\subsubsection{The parabolic equation with initial value}  This subsection is devoted to establish the maximum principle for parabolic equations under the condition that there is no boundary condition on graphs.

\begin{Theorem}\label{maxc} {\rm (Maximum principle)}
	Let $G=(V,E)$ be a graph. For any $T>0$, we assume that $v(x,t)$ $ \in C^{1}(V \times [0,T]) $, which satisfies 
	\begin{equation*}
		\begin{cases}\frac{\partial}{\partial t} v(x,t )-{\Delta_{V} }v(x,t)-k(x,t ) v(x,t ) \geq 0, & (x,t) \in V \times (0, T] , \\ v(x,0) \geq 0, & x \in V, \end{cases}
	\end{equation*}
	where $k(x,t)$ is bounded on $V \times [0,T]$. Then
	\begin{equation*}\label{1,c}
		v(x,t)\ge 0~ \text{on}~ V \times[0,T] .
	\end{equation*} 
\end{Theorem}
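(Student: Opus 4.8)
The plan is to adapt the argument used in the proof of Theorem~\ref{max}, which becomes simpler here because there is no boundary term to control. Write $\mathcal{L}v(x,t) = v_t(x,t) - \Delta_V v(x,t) - k(x,t)v(x,t)$, so that the hypothesis reads $\mathcal{L}v \ge 0$ on $V \times (0,T]$. Since $k$ is bounded on $V \times [0,T]$, fix a constant $l$ with $l > k(x,t)$ for all $(x,t) \in V \times [0,T]$, and set $\psi = v e^{-lt}$. A direct computation gives
\[
\mathcal{L}v = e^{lt}\big(\psi_t - \Delta_V \psi - (k-l)\psi\big) =: e^{lt}\,\tilde{\mathcal{L}}\psi,
\]
so $\tilde{\mathcal{L}}\psi \ge 0$ on $V \times (0,T]$, and it suffices to show $\psi \ge 0$ on $V \times [0,T]$.

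Next I would exploit finiteness. Since $V$ has finitely many vertices, $[0,T]$ is compact, and $\psi \in C^0(V \times [0,T])$, the function $\psi$ attains its minimum at some $(x_0, t_0) \in V \times [0,T]$. Suppose, for contradiction, that $\psi(x_0, t_0) < 0$. Because $\psi(x,0) = v(x,0) \ge 0$, necessarily $t_0 \in (0,T]$. At $(x_0,t_0)$ one has: the map $t \mapsto \psi(x_0,t)$ has a minimum at $t_0$, so either $t_0$ is interior and $\psi_t(x_0,t_0)=0$, or $t_0 = T$ and the left derivative is $\le 0$; in all cases $\psi_t(x_0,t_0) \le 0$. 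Since $x_0$ realizes the spatial minimum of $\psi(\cdot,t_0)$,
\[
\Delta_V \psi(x_0,t_0) = \frac{1}{\mu(x_0)}\sum_{y:\,y\sim x_0}\omega_{x_0 y}\big(\psi(y,t_0)-\psi(x_0,t_0)\big)\ge 0 .
\]
Finally $(l-k(x_0,t_0))\psi(x_0,t_0) < 0$, since $l-k(x_0,t_0)>0$ and $\psi(x_0,t_0)<0$.

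Combining the three observations,
\[
\tilde{\mathcal{L}}\psi(x_0,t_0) = \psi_t(x_0,t_0) - \Delta_V \psi(x_0,t_0) + (l-k(x_0,t_0))\psi(x_0,t_0) < 0,
\]
which contradicts $\tilde{\mathcal{L}}\psi \ge 0$ on $V \times (0,T]$. Hence $\psi(x_0,t_0)\ge 0$, so $\psi \ge 0$ on $V \times [0,T]$, and therefore $v = e^{lt}\psi \ge 0$ on $V\times[0,T]$, as claimed.

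As for difficulties, there is essentially no serious obstacle: the graph being finite removes the usual compactness and boundedness technicalities of the continuous maximum principle, and the absence of a boundary removes the step in Theorem~\ref{max} that used the condition $\mathcal{A}v \ge 0$ to compare $\psi$ at boundary vertices. The only points requiring a little care are the sign of $\psi_t(x_0,t_0)$ when $t_0 = T$ (where one uses a one-sided derivative) and choosing $l$ \emph{strictly} above the bound for $k$ so that $(l-k)\psi(x_0,t_0)$ is strictly negative; both are routine.
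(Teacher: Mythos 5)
Your proposal is correct and follows essentially the same route as the paper's proof: the exponential substitution $\psi = v e^{-lt}$ with $l$ strictly above the bound for $k$, a minimum point $(x_0,t_0)$ of $\psi$ obtained from finiteness of $V$ and compactness of $[0,T]$, and a contradiction from $\psi_t(x_0,t_0)\le 0$, $\Delta_V\psi(x_0,t_0)\ge 0$ and $(l-k)\psi(x_0,t_0)<0$. The only difference is cosmetic: you assemble the three sign estimates into a single strict inequality, while the paper splits the contradiction into two displayed inequalities.
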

\begin{proof}
Denote $$\mathcal{L}v(x,t)=v_{t}(x,t)-\Delta_{\Omega} v(x,t)-k(x,t)v(x,t).$$ It's easily seen that $\mathcal{L} v\ge 0~\text{in}~V\times (0,T].$ We observe that $k(x,t)$ is bounded in $V\times (0,T]$, then there exists a positive constant $l$ such that $l>k(x,t)$ for all $(x,t)\in V\times (0,T]$. 

If we set $\psi=ve^{-lt}$, then 
	\begin{equation*}
		\begin{aligned}
			\mathcal{L} v(x,t)&=v_{t}(x,t)-\Delta_{\Omega} v(x,t)-k(x,t)v(x,t) \\
			&=e^{lt}(\psi_{t}-\Delta_{\Omega} \psi-(k-l)\psi)	\\
			&=:e^{lt}\tilde{\mathcal{L}} \psi(x,t).
		\end{aligned} 
	\end{equation*}
We use $\mathcal{L}v\ge 0$ in $V\times (0,T]$ easily to deduce that $\tilde{\mathcal{L}} \psi \ge 0$ in  $V \times(0,T]$.
Recalling that {$v(x,t)\in C^{1}(V \times[0,T])$}, we have {$\psi(x,t)\in C^{1}(V \times[0,T])$}. Then we can find $(x_0 , t_0)\in V\times[0,T]$ such that 
	\begin{equation}\label{m1}
		\psi(x_0 , t_0)=\min\limits_{V \times[0,T]} \psi.
	\end{equation} 

Now, we are going to prove that $\psi(x_0 , t_0)\ge 0$. Arguing indirectly, we assume that  $\psi(x_0 , t_0)<0$. 
If $t_0=0$, we have $\psi(x_0 , t_0)=v(x_0,0) \ge 0$. This is a contradiction.
For $t_0 \in (0,T]$, we use $k(x_0 , t_0)<l$ and $\psi(x_0 , t_0)<0$ to deduce that 
	\begin{equation*}\label{le}
		\tilde{\mathcal{L}} \psi(x_0 , t_0)<\psi_{t}(x_0 , t_0)-\Delta_{\Omega} \psi (x_0 , t_0).
	\end{equation*} 
In this situation, we observe that $\psi_{t}(x_0 , t_0)\le 0$. Then, we have 
	\begin{equation}\label{21}
		\tilde{\mathcal{L}} \psi (x_0 , t_0) < -\Delta_{\Omega} \psi (x_0 , t_0).
	\end{equation}
It follows from \eqref{m1} that
	\begin{equation*}
		-\Delta_{{\Omega}} \psi (x_0, t_0) = -\sum_{y:y \sim x_{0}} (\psi(y,t_0) -\psi(x_0 , t_0)) \frac{\omega_{x_{0} y}}{\mu(x_0)}\le 0.
	\end{equation*}

Moreover, we apply  \eqref{21}
to deduce that 
	\begin{equation*}
		\tilde{\mathcal{L}} \psi(x_0, t_0)<0.
	\end{equation*}
This contradicts with  $\tilde{\mathcal{L}} \psi \ge 0$ in $V \times(0,T]$.
Therefore, we conclude that $\psi(x_0,t_0)\ge 0$ and hence that $\psi(x,t)\ge 0$ on $V \times[0,T]$. This implies that $v\ge 0$ on $V \times[0,T]$.	
\end{proof}

\begin{Theorem}\label{smc}
	{\rm (Strong maximum principle)}
	Assume that $u(x,t)\in C^{1}(V\times[0,T])$ satisfies 
	\begin{equation*}
		\begin{cases}
			u_{t}-\Delta_{{V}} u+ k(x,t)u\ge 0,~(x,t)\in V\times(0,T],\\
			u(x,0)=u_0(x)\ge,\not\equiv 0,~x\in V,
		\end{cases}
	\end{equation*}
where $k$ is bounded on $V\times [0,T]$.
Then we have $u(x,t)>0$ for $x\in V$ and $t\in (0,T]$.
\end{Theorem}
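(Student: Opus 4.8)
The plan is to adapt the proof of Theorem~\ref{sm} to the boundaryless setting, where the argument simplifies since there is no boundary condition to handle. First I would apply Theorem~\ref{maxc} to conclude $u(x,t)\ge 0$ on $V\times[0,T]$. Since $k$ is bounded, fix a constant $\bar{k}>0$ with $k(x,t)\le\bar{k}$ on $V\times[0,T]$, so that
\begin{equation*}
0\le u_{t}-\Delta_{V}u+k u\le u_{t}-\Delta_{V}u+\bar{k}u\quad\text{in }V\times(0,T].
\end{equation*}
By Theorem~\ref{3.3} the Cauchy problem
\begin{equation*}
\begin{cases} v_{t}-\Delta_{V}v+\bar{k}v=0, & (x,t)\in V\times(0,T],\\ v(x,0)=u_{0}(x), & x\in V, \end{cases}
\end{equation*}
has a unique solution, which by the explicit formula \eqref{cj} is smooth in $t$ and hence lies in $C^{1}(V\times[0,T])$; Theorem~\ref{maxc} then gives $v\ge 0$. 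Setting $W=u-v$, one checks that $W_{t}-\Delta_{V}W+\bar{k}W\ge 0$ with $W(x,0)=0$, so Theorem~\ref{maxc} yields $W\ge 0$, i.e. $u\ge v$ on $V\times[0,T]$. It therefore suffices to prove $v(x,t)>0$ for $x\in V$ and $t\in(0,T]$.

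To remove the zeroth-order term, put $w=e^{\bar{k}t}v$, which solves the discrete heat equation $w_{t}-\Delta_{V}w=0$ with $w(x,0)=u_{0}(x)\ge,\not\equiv 0$. From the expansion used in the proof of Theorem~\ref{3.3} (with $M=0$, $h\equiv 0$), $w$ admits the representation
\begin{equation*}
w(x,t)=\sum_{y\in V}P(x,y,t)u_{0}(y),\qquad P(x,y,t)=\sum_{j=0}^{N-1}e^{-\mu_{j}t}\Psi_{j}(x)\Psi_{j}(y)\mu(y),
\end{equation*}
where $N=|V|$. A direct computation gives $P_{t}=\Delta_{V,x}P$, with $\Delta_{V,x}P(x,y,t)=\sum_{z\in V}[P(z,y,t)-P(x,y,t)]\frac{\omega_{zx}}{\mu(x)}$, while the orthonormality and completeness of $\{\Psi_{j}\}_{j=0}^{N-1}$ from Proposition~\ref{20220803} give $P(x,y,0)=\delta_{x}(y)$. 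Applying Theorem~\ref{maxc} in the $x$-variable shows $P(\cdot,y,\cdot)\ge 0$ on $V\times[0,T]$ for each fixed $y\in V$.

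The crux is the strict positivity $P(x,y,t)>0$ for all $x,y\in V$ and $0<t\le T$, which I would establish by contradiction exactly as in the proof of Theorem~\ref{sm}. If $P(x_{0},y_{0},t_{0})=0$ for some $x_{0},y_{0}\in V$ and $t_{0}\in(0,T]$, then $t_{0}$ is a minimum of $t\mapsto P(x_{0},y_{0},t)$, hence $P_{t}(x_{0},y_{0},t_{0})\le 0$; since $P\ge 0$ we also have $\Delta_{V,x}P(x_{0},y_{0},t_{0})\ge 0$, and $P_{t}=\Delta_{V,x}P$ forces both quantities to vanish, whence $P(z,y_{0},t_{0})=0$ for every $z\sim x_{0}$. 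Iterating along paths and using that $G$ is connected, $P(z,y_{0},t_{0})=0$ for all $z\in V$; in particular
\begin{equation*}
P(y_{0},y_{0},t_{0})=\sum_{j=0}^{N-1}e^{-\mu_{j}t_{0}}\Psi_{j}(y_{0})^{2}\mu(y_{0})=0,
\end{equation*}
and since every summand is nonnegative this forces $\Psi_{j}(y_{0})=0$ for all $j$, which is impossible because $\{\Psi_{j}\}_{j=0}^{N-1}$ is a complete orthonormal system (equivalently, $\Psi_{0}$ is a positive constant). With $P>0$ and $u_{0}\ge,\not\equiv 0$, the representation of $w$ gives $w(x,t)>0$, hence $v(x,t)>0$, on $V\times(0,T]$, and consequently $u\ge v>0$ there. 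The only step demanding genuine care is this connectedness argument for the positivity of the kernel $P$; the remaining steps are a routine transcription of Theorems~\ref{3.3}, \ref{maxc} and the proof of Theorem~\ref{sm}.
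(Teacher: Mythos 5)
Your proposal is correct and follows essentially the same route as the paper's own proof: maximum principle for nonnegativity, comparison with the solution $v$ of the $\bar{k}$-problem via $W=u-v$, the substitution $w=e^{\bar{k}t}v$, the heat-kernel representation $p(x,y,t)=\sum_{j}e^{-\mu_{j}t}\Psi_{j}(x)\Psi_{j}(y)\mu(y)$, and the connectedness/contradiction argument for strict positivity of the kernel. The only (harmless) deviation is using the coefficient $\bar{k}$ rather than $k$ in the inequality for $W$, which is exactly what the paper does in the boundary-value analogue (Theorem \ref{sm}), and your justification of $P_{t}(x_0,y_0,t_0)\le 0$ at the minimum is if anything slightly more careful than the paper's.
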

\begin{proof}
	It follows from Theorem \ref{maxc} that $u(x,t)\ge 0$ on $V\times[0,T]$. Thanks to $k(x,t)$ is bounded, we can find $\bar{k}>0$ such that $k\le \bar{k}$. Using Theorems \ref{3.3} and \ref{maxc}, the linear parabolic problem 
	\begin{equation*}
		\begin{cases}
			v_{t}-\Delta_{{V}} v+ \bar{k}v= 0,~(x,t)\in V\times[0,T],\\
			v(x,0)=u_0(x),~x\in V.
		\end{cases}
	\end{equation*}
admits a unique solution $v\ge 0$ on $V\times[0,T]$.
Obviously, 
$$			v_{t}-\Delta_{V} v +kv
			\le v_{t}-\Delta_{V} v+\bar{k}v=0 
$$
in $V \times (0,T]$.

Denote $W=u-v$. Then we find that $W$ satisfies
	\begin{equation*}
		\begin{cases}W_{t}-\Delta_{V} W+k W\ge 0, & (x, t) \in {V}\times(0, T], \\  W(x, 0)=0, & x \in {V}. \end{cases}
	\end{equation*}
	By Theorem \ref{maxc}, we deduce that $w\ge 0$ on $V \times [0,T]$, and hence  that 
	 \begin{equation}\label{b1,}
	u(x,t)\ge v(x,t)~ \text{on} ~V\times[0,T].
\end{equation}

Defining $w=ve^{\bar{k}t}$; then we know that $w$ is the solution of
\begin{equation*}
	\begin{cases}
	w_t -\Delta_{V}w=0,~x\in V,~t\in [0,T],\\	w(x,0)=u_0(x),~x\in V.
	\end{cases}	
\end{equation*}
and
\begin{equation}\label{452}
	w(x,t)=\sum\limits_{y\in V}p(x,y,t)u_{0}(y)
\end{equation}
by Theorem \ref{3.3}, where  $p(x,y,t)=\sum\limits_{j=0}^{N-1}e^{-\mu_{j}t}\Psi_{j}(x)\Psi_{j}(y)\mu(y)$ is called the heat kernel. It is easy to check that 
$$p_{t}=\Delta_{\Omega,x}p,$$ where 
$
	\Delta_{\Omega,x} p(x,y,t)=\sum_{z \in \bar{\Omega}}[p(z,y,t)-p(x,y,t)]\frac{\omega_{z x}}{\mu(x)}.
$
It's known that 
$$p(x,y,0)=\delta_{x}(y)=\begin{cases}
	1,x=y,\\0,x\not = y
\end{cases}\text{for}\; x,y\in V
$$
Moreover, we apply Theorem \ref{maxc} to obtain  $p(x,y,t)\ge0$ for $t\in[0,T]$ and $x,y\in V$. 

Now, we shall claim that 
\begin{equation}\label{22}
	p(x,y,t)>0~\text{ for} ~x,y\in V~\text{ and}~ 0<t\le T.
\end{equation}
 Otherwise, there exists $x_0,y_0\in \Omega$ and $t_0\in(0,T]$ such that $p(x_0,y_0,t_0)=0$. Thus we deduce that  
 $$0=p_{t}(x_0 ,y_0,t_0)=\Delta_{\Omega,x} p(x_0,y_0,t_0)\ge 0.$$ 
 Therefore, $p(x,y_0,t_0)=p(x_0,y_0,t_0)=0$ $\forall x\in V:x\sim x_0$. As the connectivity of
 $V$, for any $z\in V$, we can find $x_j (j=1,\cdots,n)$ such that $x_1\sim x_2\sim\cdots\sim x_n\sim z$. By the discussion inductively, we see that $p(z,y_0,t_0)=0$. This implies that 

$$p(y_0,y_0,t_0)=\sum\limits_{j=0}^{N-1}e^{-\mu_{j}t_0}\Psi_{j}^{2}(y_0)\mu(y_0)=0.$$ Therefore, we see that 
$\sum\limits_{j=0}^{N-1}\Psi_{j}^{2}(y_0)\mu(y_0)=0.$ This is a contradiction with Proposition \ref{20220803}.

Thanks to $u_{0}\ge,\not\equiv 0$, by \eqref{452} and \eqref{22}, we deduce that
$w(x,t)>0$ for $x\in V$ and $t\in (0,T]$. It follows that $v(x,t)>0$ for $x\in V$ and $t\in (0,T]$. In view of \eqref{b1,}, we see that $u(x,t)>0$ for $x\in V$ and $t\in (0,T]$.
\end{proof}

\subsection{Maximum principle for elliptic problems}
In this section, we establish the maximum principle for elliptic equations on graphs.

\subsubsection{The elliptic problems with boundary}
\begin{Theorem}{\rm (Maximum principle)}\label{25}
	Suppose that $u$ satisfies 
	\begin{equation*}
		-{\Delta_{\Omega} u}- \vec{b}\cdot\nabla u +c(x) u\ge 0\text{~in~}\Omega,~\mathcal{B} u\ge 0~\text{on}~\partial{\Omega}.
	\end{equation*}
where $b(x)\ge 0$ and $c(x)> 0$ for all $x\in V$.
Then we have $u\ge 0$ for all $x\in\bar{\Omega}$.
\end{Theorem}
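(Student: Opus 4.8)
The plan is to argue by contradiction using the discrete minimum-point technique, exactly parallel to the parabolic maximum principles (Theorems \ref{max} and \ref{maxc}) but now without a time variable. Suppose $u$ is not nonnegative on $\bar{\Omega}$. Since $\bar{\Omega}$ is a finite graph, the function $u$ attains its minimum over $\bar{\Omega}$ at some vertex $x_0$, and by assumption $u(x_0)<0$. The first step is to rule out $x_0\in\partial\Omega$: if $x_0\in\partial\Omega$, then since $\beta(x_0)\ge 0$, $\alpha(x_0)\ge 0$, $\alpha(x_0)+\beta(x_0)>0$, and since $x_0$ is a global minimizer we have $\frac{\partial u}{\partial_\Omega n}(x_0)=\sum_{y\in\Omega}(u(x_0)-u(y))\frac{\omega_{x_0 y}}{\mu(x_0)}\le 0$, it follows that $\mathcal{B}u(x_0)=\alpha(x_0)u(x_0)+\beta(x_0)\frac{\partial u}{\partial_\Omega n}(x_0)<0$ unless both terms vanish; a short case analysis (using $u(x_0)<0$ and $\alpha+\beta>0$) contradicts $\mathcal{B}u\ge 0$ on $\partial\Omega$ in the Robin/Dirichlet cases. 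Hence the minimum is attained at some interior vertex.

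So assume $x_0\in\Omega$ with $u(x_0)=\min_{\bar\Omega}u<0$. Since $x_0$ is a global minimizer, $\Delta_\Omega u(x_0)=\sum_{y\in\bar\Omega}(u(y)-u(x_0))\frac{\omega_{yx_0}}{\mu(x_0)}\ge 0$. Next I examine the drift term: $\vec b\cdot\nabla u(x_0)$. Here one must be careful about the precise meaning of $\vec b\cdot\nabla u$ on the graph; interpreting it as $\sum_{y\sim x_0} b(x_0)(u(y)-u(x_0))\sqrt{\omega_{x_0y}/(2\mu(x_0))}$ (or whichever convention the paper fixes), each summand $u(y)-u(x_0)\ge 0$ and $b(x_0)\ge 0$, so $\vec b\cdot\nabla u(x_0)\ge 0$. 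Therefore
\[
0\le -\Delta_\Omega u(x_0)-\vec b\cdot\nabla u(x_0)+c(x_0)u(x_0)\le c(x_0)u(x_0)<0,
\]
since $c(x_0)>0$ and $u(x_0)<0$, a contradiction. Hence $u\ge 0$ on $\bar\Omega$.

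The routine parts are the two boundary cases and the sign bookkeeping for $\Delta_\Omega u(x_0)$. The step I expect to be the main obstacle is handling the gradient/drift term $\vec b\cdot\nabla u$ correctly: unlike in the continuous setting, $\nabla u$ on a graph is an edge-indexed vector, so the pairing $\vec b\cdot\nabla u$ needs an unambiguous definition, and one must verify that at an interior minimizer this term has a sign compatible with the inequality (nonpositive after moving to the right-hand side). If the convention makes $\vec b\cdot\nabla u(x_0)$ have the wrong sign, the argument instead uses $c(x)>0$ to absorb it, or one replaces $u$ by $u/\varphi$ for a suitable positive weight $\varphi$ (e.g. a first Dirichlet eigenfunction) to kill the drift — but I expect the straightforward minimum-principle argument above to suffice once the definition of $\vec b\cdot\nabla u$ is pinned down.
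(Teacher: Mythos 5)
Your proposal is correct and follows essentially the same route as the paper: evaluate at a global minimizer $x_0$ of $u$ over $\bar\Omega$, use $\Delta_\Omega u(x_0)\ge 0$ and $b\ge 0$ to get the sign of the drift term, and derive $c(x_0)u(x_0)\ge 0$ contradicting $u(x_0)<0$; the boundary case is likewise handled as in the paper (Dirichlet gives an immediate contradiction, Neumann forces the minimum onto an interior neighbor). The only cosmetic difference is that you phrase the boundary step for a general Robin operator $\alpha u+\beta\frac{\partial u}{\partial_\Omega n}$, while the theorem's $\mathcal{B}$ is just the Dirichlet or Neumann case, and the paper's $\vec b\cdot\nabla u$ pairs $b(y)$ (not $b(x_0)$) with the edge differences, which does not affect your sign argument since $b\ge 0$ on all of $V$.
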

\begin{proof}
	Let $u(x_0)=\min\limits_{\bar{x\in\Omega}} u(x)$. We shall prove that $u(x_0)\ge 0$. Suppose by way of contradiction that $u(x_0)<0$. 
	If $x_0\in \Omega$, then we see that 
	\begin{equation*}
		\begin{aligned}
			&-\Delta_{{\Omega}} u(x_0)-\vec{b}\cdot\nabla u(x_0)\\
			&=-\sum\limits_{y:y\sim x_0} (u(y)-u(x_0)) \frac{\omega_{y x_0}}{\mu(x_0)}-\sum_{y \in \bar{\Omega}} b(y)\left(u(y)-u(x_0) \right)\sqrt{\frac{\omega_{x_{0} y}}{2\mu(x_0)}}
			 \le 0.
		\end{aligned}	
	\end{equation*}
Thus, we have 
\begin{equation*}
	\left( -{\Delta_{\Omega}} u-\vec{b}\cdot \nabla u +c(x)u\right)  \big{|}_{x=x_0} <0,
\end{equation*}
which is a contradiction. 

For $x_0\in \partial{\Omega}$, we first consider the case of $\mathcal{B} u=u$, we deduce that $u(x_0)<0$, which contradicts to $u\ge 0$ on $\partial{\Omega}$. 
However, for the case of $\mathcal{B}u=\frac{\partial u}{\partial_{\Omega} n}$, we see that 
\begin{equation*}
	0\le \frac{\partial u}{\partial_{\Omega} n} (x_0)= \sum\limits_{y\in \bar{\Omega}:y\sim x_0}  (u(x_0)-u(y))\frac{\omega_{x_0 y}}{\mu (x_0)} \le 0.
\end{equation*}
This implies that $u(y)=u(x_0)$ for all $y\in \Omega$ satisfying $y\sim x_0$. As $x_0\in \partial \Omega$, we can find  $z\in \Omega:z\sim x_0$ such that $u(z)=u(x_0)<0$. By a similar argument as above (the case that $x_0\in \Omega$), we obtain a contradiction. 
Therefore, we deduce that $u(x_0)\ge 0$ and hence that $u(x)\ge 0$ $\forall x \in \bar{\Omega}$.
\end{proof}

\subsubsection{The elliptic problems without boundary}
\begin{Theorem}\label{27}
	Assume that $u$ satisfies $-\Delta_{V} u- \vec{b}\cdot\nabla u +c(x) u\ge 0$ on $V$, where $b(x)\ge 0$ and $c(x)> 0$ for all $x\in V$. Then $u\ge 0$ for all $x\in V$.
\end{Theorem}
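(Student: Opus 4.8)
The plan is to mimic the interior part of the proof of Theorem~\ref{25}, since now there is no boundary to handle at all. Because $V$ is finite, the function $u$ attains its minimum over $V$ at some vertex $x_0\in V$, i.e.\ $u(x_0)=\min_{x\in V}u(x)$. I would argue by contradiction: suppose $u(x_0)<0$ and derive a sign contradiction from the differential inequality evaluated at $x_0$.

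Next I would evaluate the three summands of $-\Delta_{V}u-\vec b\cdot\nabla u+c(x)u$ at $x_0$. Since $u(y)\ge u(x_0)$ for every $y\in V$, we get
\begin{equation*}
	-\Delta_{V}u(x_0)=-\sum_{y\sim x_0}\bigl(u(y)-u(x_0)\bigr)\frac{\omega_{yx_0}}{\mu(x_0)}\le 0,
\end{equation*}
and, using $b(y)\ge 0$ together with $u(y)-u(x_0)\ge 0$ and $\omega_{x_0 y}\ge 0$,
\begin{equation*}
	-\vec b\cdot\nabla u(x_0)=-\sum_{y\in V} b(y)\bigl(u(y)-u(x_0)\bigr)\sqrt{\frac{\omega_{x_0 y}}{2\mu(x_0)}}\le 0.
\end{equation*}
Finally, $c(x_0)>0$ and $u(x_0)<0$ force $c(x_0)u(x_0)<0$ \emph{strictly}. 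Adding the three contributions yields $\bigl(-\Delta_{V}u-\vec b\cdot\nabla u+c(x)u\bigr)\big|_{x=x_0}<0$, contradicting the hypothesis that this quantity is $\ge 0$ on $V$. Hence $u(x_0)\ge 0$, and therefore $u(x)\ge 0$ for all $x\in V$.

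I do not expect any real obstacle here: the argument is strictly easier than Theorem~\ref{25}, because the minimizing vertex is automatically an interior vertex of $V$ (there is no boundary case to split off), so the only points to be careful about are the sign conventions in the discrete gradient term $\vec b\cdot\nabla u$ and the observation that the strict inequality is produced solely by the assumption $c(x)>0$. Note in particular that connectedness of $G$ plays no role here; only the finiteness of $V$, which guarantees that the minimum is attained, is used.
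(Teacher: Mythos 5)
Your proof is correct and follows essentially the same route as the paper's: the paper also evaluates the inequality at a minimizing vertex $x_0$ with $u(x_0)<0$ and concludes $-\Delta_{V}u(x_0)-\vec b\cdot\nabla u(x_0)+c(x_0)u(x_0)<0$, a contradiction (your version just spells out the three sign estimates, using the same convention $\vec b\cdot\nabla u(x_0)=\sum_y b(y)(u(y)-u(x_0))\sqrt{\omega_{x_0y}/(2\mu(x_0))}$ that appears in the proof of Theorem~\ref{25}). Your closing remarks about finiteness versus connectedness are also accurate.
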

\begin{proof}
	Assume that there exists $x_0\in V$ such that $u(x_0)=\min\limits_{V} u<0$. Then we see that
	\begin{equation*}
		-\Delta_{V} u(x_0)-\vec{b}\cdot \nabla u(x_0)+c(x_0)u(x_0)<0,
	\end{equation*}
	which is a contradiction.
\end{proof}

\section{Upper and lower solutions method}

In this section, we develop the upper and lower solutions method for parabolic equations and elliptic equations on finite connected weighted graphs.

\subsection{The upper and lower solutions method for parabolic problems}
\subsubsection{The parabolic equation with initial-boundary value}

In this subsection, we introduce the upper and lower solutions method for the problem \eqref{1s} on graphs.
 
 \begin{Definition}
 	Suppose that $u\in C^{1}(\Omega\times[0,T]) \cap C^{0}(\bar{\Omega}\times[0,T]) $. If $u$ satisfies 
 	\begin{equation*}
 		\begin{cases}
 				u_t-\Delta_{\Omega} u\ge (\le) f(x,t,u),~&(x,t)\in \Omega\times (0,T], \\
 		\mathcal{B}u(x,t) \ge (\le) g,~&(x,t)\in \partial \Omega\times(0,T] ,\\
 		u(x,0)\ge (\le) \psi,~&x\in \Omega,		
 		\end{cases}	
 	\end{equation*}
 then we call $u$ an upper (a lower) solution of the problem \eqref{1s}.
\end{Definition}
 
 \begin{Theorem}{\rm (Ordering of upper and lower solutions)} \label{o}
 	Let $\bar{u}$, $\underline{u}$ be an upper and a lower solution of \eqref{1s}, respectively. Denote $\sigma=\inf\limits_{\Omega_T}\min\{\bar{u},\underline{u} \}$ and $\omega=\sup\limits_{\Omega_T} \max\{\bar{u},\underline{u} \}$. If $f$ satisfies the Lipschitz condition in $u\in [\sigma,\omega]$, i.e., there exists a constant $M>0$ such that \begin{equation}\label{4.5}
 		|f(x,t,u)-f(x,t,v)| \le M |u-v|,~\forall~(x,t)\in \Omega\times[0,T],~u,v\in [\sigma,\omega], 
 	\end{equation}then $\bar{u}\ge \underline{u}$ in $\bar{\Omega}\times [0,T]$. Moreover, $\bar{u}>\underline{u}$ in $\Omega \times (0,T]$ if $\bar{u}(x,0)\not\equiv \underline{u}(x,0)$.
 \end{Theorem}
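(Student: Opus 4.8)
The plan is to reduce the ordering assertion to the (weak and strong) maximum principles already proved in Theorem \ref{max} and Theorem \ref{sm}, applied to the difference $w:=\bar u-\underline u$. Since $\Omega$ is finite and $[0,T]$ is compact while $\bar u$ and $\underline u$ are continuous, the numbers $\sigma$ and $\omega$ are finite real numbers, and every value $\bar u(x,t),\underline u(x,t)$ lies in $[\sigma,\omega]$; hence the Lipschitz bound \eqref{4.5} is available along these two functions.

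First I would subtract the defining inequalities of the upper and the lower solution. For $(x,t)\in\Omega\times(0,T]$ this gives
\[
w_t-\Delta_\Omega w\;\ge\; f(x,t,\bar u)-f(x,t,\underline u).
\]
Then I would introduce the coefficient
\[
k(x,t):=
\begin{cases}
-\dfrac{f(x,t,\bar u(x,t))-f(x,t,\underline u(x,t))}{\bar u(x,t)-\underline u(x,t)}, & \bar u(x,t)\neq\underline u(x,t),\\
0, & \bar u(x,t)=\underline u(x,t),
\end{cases}
\]
so that $|k(x,t)|\le M$ on $\Omega\times(0,T]$ by \eqref{4.5}, and $f(x,t,\bar u)-f(x,t,\underline u)=-k(x,t)\,w(x,t)$ holds identically. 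Consequently $w$ satisfies $w_t-\Delta_\Omega w+k(x,t)\,w\ge 0$ in $\Omega\times(0,T]$ with $k$ bounded.

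Next I would verify the remaining hypotheses. From $\bar u(x,0)\ge\psi\ge\underline u(x,0)$ one gets $w(x,0)\ge 0$ on $\Omega$. Both admissible boundary operators, $\mathcal{B}u=u$ and $\mathcal{B}u=\frac{\partial u}{\partial_{\Omega} n}$, are linear and are the special cases $(\alpha,\beta)=(1,0)$ and $(0,1)$ of the operator $\mathcal{A}$ in Theorem \ref{max}; hence $\mathcal{B}\bar u\ge g\ge\mathcal{B}\underline u$ yields $\mathcal{A}w=\mathcal{B}w\ge 0$ on $\partial\Omega\times[0,T]$. Since $w\in C^{1}(\Omega\times[0,T])\cap C^{0}(\bar\Omega\times[0,T])$ as a difference of functions in that class, Theorem \ref{max} applies and gives $w\ge 0$ on $\bar\Omega\times[0,T]$, that is $\bar u\ge\underline u$. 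If moreover $\bar u(x,0)\not\equiv\underline u(x,0)$, then $w(x,0)\ge,\,\not\equiv 0$, and $w$ still satisfies the same differential inequality together with $\mathcal{B}w\ge 0$; the strong maximum principle Theorem \ref{sm} then yields $w>0$ in $\Omega\times(0,T]$ (and on $\bar\Omega\times(0,T]$ in the Neumann case), which is the claimed strict inequality.

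I do not expect a genuine obstacle here: the only point requiring a word of care is that $k$ is defined through a difference quotient and is guaranteed only to be bounded, not continuous — but Theorems \ref{max} and \ref{sm} were stated precisely for bounded $k$, so the case split above is harmless, and the rest is a direct invocation of those results.
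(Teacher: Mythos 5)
Your proposal is correct and follows essentially the same route as the paper: both pass to the difference $w=\bar u-\underline u$, absorb $f(x,t,\bar u)-f(x,t,\underline u)$ into a bounded zeroth-order coefficient (you use the exact difference quotient, the paper uses the piecewise constant $\pm M$), and then invoke Theorem \ref{max} for $w\ge 0$ and Theorem \ref{sm} for strict positivity when $w(\cdot,0)\not\equiv 0$. The two choices of coefficient are interchangeable since the maximum principles require only boundedness, so no further comment is needed.
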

\begin{proof}
	Setting 
	\begin{equation*}
		\hat{c}(x,t)=\begin{cases}
			M,~\text{if}~\bar{u}(x,t)\ge \underline{u}(x,t), \\
			-M,~\text{if}~\bar{u}(x,t)< \underline{u}(x,t),
		\end{cases}
	\end{equation*}
and $u=\bar{u}-\underline{u}$. Then $u$ satisfies 
\begin{equation*}
	\begin{cases}
		u_{t}-\Delta_{{\Omega}} u+ \hat{c}(x,t) u(x,t) \ge 0,~~&(x,t)\in \Omega\times (0,T] ,\\
		\mathcal{B} u{\ge} 0,~& (x,t)\in \partial \Omega\times(0,T],\\
		u(x,0)\ge 0,~&x\in \Omega.
	\end{cases}
\end{equation*}
Clearly, $\hat{c}(x,t)$ is bounded. It follows from Theorem \ref{max} that $u(x,t)\ge 0$ for $x\in \bar{\Omega}$ and $t\in [0,T]$. Furthermore, if $\bar{u}(x,0)\not\equiv \underline{u}(x,0)$, then $u(x,0)\not\equiv 0$, $x\in \Omega$. Thus, we apply Theorem \ref{sm} to deduce that 
\begin{equation*}
	u(x,t)>0~\text{in}~\Omega \times (0,T]. 
\end{equation*} 
\end{proof}

\begin{Theorem}\label{sxj1}
	Let $\overline{u}$, $\underline{u}$ be an upper and a lower solution of \eqref{1s}, respectively,  $g:\partial\Omega\times [0,T]\to \mathbb{R}$ belong to $C^{0}(\partial{\Omega} \times[0,T]) $ and $\psi:\Omega\to \mathbb{R}$ be given. If 
	\begin{equation}\label{c}
			f(x,t,z)\text{ is continuous with respect to}~ (t,z)\in[0,T]\times[\underline{\eta},\bar{\eta}]
	\end{equation}
 for every $x\in\Omega$, $f$ satisfies the Lipschitz condition in $u\in [\underline{\eta},\bar{\eta}]$, i.e., there exists a positive constant $M$ such that 
	\begin{equation}\label{L}
		|f(x,t,u)-f(x,t,v)|\le M |u-v|
	\end{equation}
for all $(x,t)\in \Omega\times(0,T],~u,v\in [\underline{\eta},\bar{\eta}]$, 
  then \eqref{1s} admits a unique solution $ u\in C^{0}(\overline{\Omega}_{T}) \cap C^{1}({\Omega }\times[0,T]) $ satisfying $\underline{u}\le u \le \overline{u}$ on $\overline{\Omega}_{T}$, where $\overline{\Omega}_{T}:=\bar{\Omega }\times [0,T],\; \underline{\eta}=\min\limits_{\overline{\Omega}_{T}} \min \{\overline{u}, \underline{u}\}$ and $\overline{\eta}=\max\limits_{\overline{\Omega}_{T}} \max \{\overline{u}, \underline{u} \}$.
\end{Theorem}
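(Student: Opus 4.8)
The plan is to establish existence via the classical monotone iteration scheme, using the linear theory and maximum principles already proven. First I would fix a constant $M>0$ for which the Lipschitz condition \eqref{L} holds, and observe that the map $z\mapsto f(x,t,z)+Mz$ is nondecreasing on $[\underline{\eta},\bar{\eta}]$ for each $(x,t)$. Then I would set up two iteration sequences $\{\overline{u}^{(k)}\}$ and $\{\underline{u}^{(k)}\}$ starting from $\overline{u}^{(0)}=\overline{u}$ and $\underline{u}^{(0)}=\underline{u}$, where $\overline{u}^{(k+1)}$ solves the linear problem
\begin{equation*}
	\begin{cases}
		w_t-\Delta_\Omega w+Mw=f(x,t,\overline{u}^{(k)})+M\overline{u}^{(k)},~&(x,t)\in\Omega\times(0,T],\\
		\mathcal{B}w=g,~&(x,t)\in\partial\Omega\times(0,T],\\
		w(x,0)=\psi,~&x\in\Omega,
	\end{cases}
\end{equation*}
and similarly for $\underline{u}^{(k+1)}$ with $\overline{u}^{(k)}$ replaced by $\underline{u}^{(k)}$. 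Each such linear problem has a unique solution in $C^1(\Omega\times[0,T])$ by Theorem \ref{N} (after noting that the right-hand side is continuous and integrable, which follows from \eqref{c} and the finiteness of the graph), so the sequences are well defined.

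Next I would prove the ordering relations
\begin{equation*}
	\underline{u}=\underline{u}^{(0)}\le\underline{u}^{(1)}\le\cdots\le\underline{u}^{(k)}\le\cdots\le\overline{u}^{(k)}\le\cdots\le\overline{u}^{(1)}\le\overline{u}^{(0)}=\overline{u}
\end{equation*}
on $\overline{\Omega}\times[0,T]$ by induction. The base comparisons $\underline{u}^{(1)}\ge\underline{u}^{(0)}$ and $\overline{u}^{(1)}\le\overline{u}^{(0)}$ come from the defining inequalities of upper/lower solutions together with Theorem \ref{max} (applied to the difference, which satisfies a differential inequality with zero initial and boundary data and bounded zeroth-order coefficient $M$); the inductive step $\overline{u}^{(k+1)}-\underline{u}^{(k+1)}\ge0$ and the monotonicity of each sequence use the same maximum principle plus the monotonicity of $z\mapsto f+Mz$ on $[\underline{\eta},\bar{\eta}]$ — this is where I must also check that all iterates stay in the interval $[\underline{\eta},\bar{\eta}]$ so that \eqref{L} and the monotonicity are applicable, which follows inductively from the ordering chain. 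Consequently, for each fixed vertex $x$ and time $t$ the sequences $\overline{u}^{(k)}(x,t)$ and $\underline{u}^{(k)}(x,t)$ are monotone and bounded, hence converge pointwise to limits $\overline{u}^{\infty}(x,t)$ and $\underline{u}^{\infty}(x,t)$ with $\underline{u}\le\underline{u}^{\infty}\le\overline{u}^{\infty}\le\overline{u}$.

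Then I would pass to the limit. Since the graph is finite, pointwise convergence at each vertex is uniform in $x$; to get that the limits are actual $C^1$-in-$t$ solutions I would rewrite each iterate via the variation-of-constants/Duhamel representation supplied by the proof of Theorem \ref{N} (the explicit eigenfunction expansion \eqref{dj} or \eqref{dj22}), differentiate once in $t$ to see that $\partial_t\overline{u}^{(k+1)}$ is itself given by a formula continuous in the data, and use the dominated-convergence argument together with continuity of $f$ in its arguments to conclude $\overline{u}^{\infty},\underline{u}^{\infty}\in C^1(\Omega\times[0,T])$ solve \eqref{1s}. Finally I would prove uniqueness within the order interval: if $u_1,u_2$ are two solutions with $\underline{u}\le u_i\le\overline{u}$, then $w=u_1-u_2$ satisfies $w_t-\Delta_\Omega w+\hat{c}(x,t)w=0$ with $|\hat{c}|\le M$, zero boundary and initial data, so applying Theorem \ref{max} to both $w$ and $-w$ forces $w\equiv0$. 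I expect the main obstacle to be the regularity/convergence step — carefully justifying that the pointwise monotone limit of the iterates is differentiable in $t$ and genuinely satisfies the equation rather than merely an integral identity; the cleanest route is to exploit that on a finite graph the PDE is really a finite system of ODEs, so that once $\overline{u}^{(k)}\to\overline{u}^{\infty}$ uniformly the right-hand sides $f(x,t,\overline{u}^{(k)})+M\overline{u}^{(k)}$ converge uniformly on $\Omega\times[0,T]$, and the corresponding ODE solutions converge in $C^1$.
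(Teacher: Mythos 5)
Your proposal is correct and follows essentially the same route as the paper: the monotone iteration $\overline{u}^{(k)},\underline{u}^{(k)}$ built from the linear solvability result (Theorem \ref{N}), ordering of the iterates via the maximum principle (Theorem \ref{max}) and the Lipschitz-induced monotonicity of $z\mapsto f(x,t,z)+Mz$, and passage to the limit through an integral (Duhamel-type) identity exploiting that the problem is a finite system of ODEs. The only notable difference is that you explicitly carry out the uniqueness step by applying Theorem \ref{max} to $\pm(u_1-u_2)$, which the paper's written proof leaves implicit; this is a welcome addition rather than a deviation.
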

 \begin{proof}
If we define 
 	\begin{equation*}\label{dy}
 		<\underline{u},\bar{u}>=\{u\in C^{0}(\overline{\Omega}_{T}) | \underline{u}\le u \le \bar{u}~ \text{in}~ \overline{\Omega}_{T}\},
 	\end{equation*}	
 then for any given $v\in <\underline{u},\bar{u}>$,  we see that $v\in C^{0}(\Omega \times [0,T])$. Combining this with \eqref{c}, we deduce that
 	\begin{equation*}
 	H(x,t):=	f(x,t,v(x,t))+Mv(x,t) \in C^{0}(\Omega \times [0,T]) . 
 	\end{equation*}
Therefore, the linear problem 
 		\begin{equation*}\label{}
 		\left\{\begin{array}{l}
 			u_{t} (x, t)-\Delta_{\Omega} u(x, t)+Mu(x,t)=H(x, t),~ x \in \Omega,~ t \in(0, T], \\
 			\mathcal{B}u(x, t)=g(x, t), \quad x \in \partial \Omega, \quad t \in[0, T], \\
 			u(x, 0)=\psi(x), \quad x \in \Omega
 		\end{array}\right.
 	\end{equation*}
 admits a unique solution $u(x,t)\in C^{0}({\bar\Omega }\times[0,T]) \cap C^{1}({\Omega }\times[0,T]) $ by Theorem \ref{N}.

Define $\mathcal{F}:<\underline{u},\bar{u}>\to X_{T}:=C^{0}(\bar{\Omega}\times [0,T])$ by $u=\mathcal{F}v$ and
\begin{equation*}
	\underline{u}_{0}=\underline{u},~ \underline{u}_{k}=\mathcal{F}\underline{u}_{k-1},~ \bar{u}_{0}=\bar{u},~ \bar{u}_{k}=\mathcal{F}\bar{u}_{k-1},~ k=1,2,\cdots.
\end{equation*}
Then we prove that 
$$\underline{u}\le \underline{u}_{1}\le\cdots \le\underline{u}_{k}\le \bar{u}_{k}\le\cdots\le\bar{u}_{1}\le \bar{u}\text{~for ~all~} k\ge 1.$$
For any $u, v\in<\underline{u},\bar{u}>$ satisfying $u\le v$ on $\bar{\Omega}\times[0,T]$. Set $w_1 = \mathcal{F} u$ $w_2 = \mathcal{F} v$ and $w=w_2 - w_1$. In view of \eqref{L}, then $w$ satisfies 
\begin{equation*}\label{}
	\left\{\begin{array}{l}
		w_{t} -\Delta_{\Omega} w+Mw= f(x,t,v)-f(x,t,u)+M(v-u)\ge 0~ in~ \Omega\times(0,T], \\
		\mathcal{B}w = 0~\text{ on}~ \partial{\Omega}\times[0,T], \\
		w(x, 0)= 0 ~\text{ on}~ \Omega.
	\end{array}\right.
\end{equation*}
Therefore, we can apply Theorem \ref{max} to conclude that $w\ge 0$ on $\bar{\Omega}\times [0,T]$. This implies that $w_1=\mathcal{F}u\le\mathcal{F} v=w_2$ on $\bar{\Omega}\times[0,T]$. It's easily seen that $\bar{u}\ge \underline{u}$ in $\bar{\Omega} \times[0,T]$ by Theorem \ref{o} and hence that $\underline{u}_{k}\le \bar{u}_{k}$ in $\bar{\Omega }\times (0,T]$ for any integer $k\ge 1$ with induction. 

Now, we assert that $\underline{u}\le \underline{u}_{1}$. Let $z=\underline{u}_{1}-\underline{u}$. Then we use  Theorem \ref{max} to the following problem
 \begin{equation*}\label{}
 	\left\{\begin{array}{l}
 	z_{t} (x, t)-\Delta_{\Omega} z(x, t)+M z \ge 0,~ x \in \Omega,~ t \in(0, T], \\
 		\mathcal{B}z(x, t)=0, \quad x \in \partial \Omega, \quad t \in[0, T], \\
 		z(x, 0)=0, \quad x \in \Omega
 	\end{array}\right.
 \end{equation*}
obtain that $\underline{u}\le \underline{u}_{1}$ on $\bar{\Omega}\times[0,T]$.
 Suppose that $\underline{u}_{k+1}\ge \underline{u}_{k}$ on $\bar{\Omega}\times[0,T]$. Denote $U:=\underline{u}_{k+2}-\underline{u}_{k+1}$. It follows from \eqref{L} that $U$ satisfies 
\begin{equation*}\label{}
	\left\{\begin{array}{l}
		U_{t} (x, t)-\Delta_{\Omega} U(x, t)+MU(x,t)= f(x,t,\underline{u}_{k+1})-f(x,t,\underline{u}_{k})+M(\underline{u}_{k+1}-\underline{u}_{k}) \ge 0~ \\
	\qquad\qquad\qquad\qquad\qquad\qquad\qquad\qquad\qquad\qquad\qquad\qquad\qquad\qquad\qquad	in~ \Omega\times(0,T], \\
		\mathcal{B}U \ge 0~\text{ on}~ \partial{\Omega}\times [0,T], \\
		U(x, 0)\ge 0 ~\text{ on}~ \Omega.
	\end{array}\right.
\end{equation*}
We use Theorem \ref{max} to deduce that $\underline{u}_{k+2}\ge \underline{u}_{k+1}$ on $\bar{\Omega}\times[0,T]$.
Therefore, by induction, we see that $\underline{u}_{k+1}\ge \underline{u}_{k}$ on $\bar{\Omega}\times[0,T]$ for all $k\ge 1$. Similarly, since $\bar{u}_{k}$ satisfies 
\begin{equation}\label{j}
	\left\{\begin{array}{l}
		(\bar{u}_{k})_{t} -\Delta_{\Omega} \bar{u}_{k}+M\bar{u}_{k}=f(x,t,\bar{u}_{k-1})+M\bar{u}_{k-1}(x,t),~ x \in \Omega,~ t \in(0, T], \\
		\mathcal{B}\bar{u}_{k}(x, t)=g(x, t), \quad x \in \partial \Omega, \quad t \in[0, T], \\
		\bar{u}_{k}(x, 0)=\psi(x), \quad x \in \Omega,
	\end{array}\right.
\end{equation}
we use the similar arguments of above to show that 
$\bar{u}_{k+1}\le \bar{u}_{k}$ on $\bar{\Omega}\times[0,T]$ for all $k\ge 1$.

Next, we can define 
\begin{equation*}
	\underline{u}(x,t):=\lim\limits_{k\to +\infty} \underline{u}_{k}(x,t),~\bar{u}(x,t):=\lim\limits_{k\to +\infty} \bar{u}_{k}(x,t)
\end{equation*}
For any given $x\in\Omega$ and any $(t_1,t_2)\subset [0,T]$, it follows from the first equality in \eqref{j} that
\begin{equation}\label{j2}
	\bar{u}_{k}(x,t_2)-\bar{u}_{k}(x,t_1)=\int_{t_1}^{t_2}\left( \Delta_{{\Omega}} \bar{u}_{k}-M\bar{u}_{k}+f(x,t,\bar{u}_{k-1})+M\bar{u}_{k-1} \right)dt
\end{equation}
As $k\to +\infty$,  \eqref{j2} becomes the following equation
\begin{equation*}\label{}
	\bar{u}(x,t_2)-\bar{u}(x,t_1)=\int_{t_1}^{t_2} \left(\Delta_{{\Omega}} \bar{u}-M\bar{u}+f(x,t,\bar{u})+M\bar{u}~ \right)dt
\end{equation*}
which implies that 
\begin{equation*}
	\bar{u}_{t} -\Delta_{\Omega} \bar{u}+M\bar{u}=f(x,t,\bar{u})+M\bar{u}(x,t)~\text{for}~x\in \Omega,~t\in (0,T].
\end{equation*}
Letting $k\to\infty$ in the second and third equalities of \eqref{j}, we deduce that 
\begin{equation*}\label{}
	\left\{\begin{array}{l}
		\mathcal{B}\bar{u}(x, t)=g(x, t), \quad x \in \partial \Omega, \quad t \in[0, T], \\
		\bar{u}(x, 0)=\psi(x), \quad x \in \Omega
	\end{array}\right.
\end{equation*}
Therefore, $\bar{u}\in C^{1}(\Omega\times[0,T])\cap C^{0}(\bar{\Omega}\times[0,T])$ satisfies \eqref{1s}.
Similarly, we can show that $\underline{u}\in C^{1}(\Omega\times[0,T])\cap C^{0}(\bar{\Omega}\times[0,T])$ satisfies \eqref{1s}.
 \end{proof}

\subsubsection{The parabolic equation with initial value} In this subsection, we establish the upper and lower solutions method for the problem \eqref{34}.

\begin{Definition}
	Suppose that $u(x,t)\in C^{1}(V \times [0,T] )$ satisfies 
	\begin{equation*}
		\begin{cases}
			u_{t}-\Delta_{V} u \ge (\le) f(x,t,u),~x\in V \times (0,T], \\
			u(x,0)\ge(\le) u_{0} (x),~x\in V.
		\end{cases}	
	\end{equation*}
	Then $u$ is called a supersolution (a subsolution) of the problem \eqref{34}.
\end{Definition}

\begin{Theorem}{\rm (Ordering of upper and lower solutions)} \label{oc}
	Let $\bar{u}$, $\underline{u}$ be an upper and a lower solution of \eqref{34}, respectively, and $u_0:V \to \mathbb{R}$ be given. Denote $\sigma=\inf\limits_{V \times[0,T ]}\min\{\bar{u},\underline{u} \}$ and $A=\sup\limits_{V \times[0,T ]} \max\{\bar{u},\underline{u} \}$. If $f$ satisfies the Lipschitz condition in $u\in [\sigma,A]$, i.e., there exists a constant $M>0$ such that \begin{equation}\label{4.51}
		|f(x,t,u)-f(x,t,v)| \le M |u-v|,~\forall~(x,t)\in V \times(0,T ],~u,v\in [\sigma,A], 
	\end{equation}then $\bar{u}\ge \underline{u}$ in $V \times[0,T ]$. Moreover, $\bar{u}>\underline{u}$ in $V \times (0,T ]$ if $\bar{u}(x,0)\not\equiv \underline{u}(x,0)$.
\end{Theorem}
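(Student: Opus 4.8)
The plan is to reproduce, in the boundary-free setting, the argument used for Theorem \ref{o}, with Theorems \ref{maxc} and \ref{smc} playing the roles that Theorems \ref{max} and \ref{sm} played there. Put $u := \bar u - \underline u$; since $\bar u,\underline u \in C^{1}(V\times[0,T])$, also $u\in C^{1}(V\times[0,T])$. Introduce the sign-adapted, bounded coefficient
\begin{equation*}
	\hat c(x,t)=
	\begin{cases}
		M, & \bar u(x,t)\ge \underline u(x,t),\\
		-M, & \bar u(x,t)<\underline u(x,t).
	\end{cases}
\end{equation*}

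First I would check the differential inequality $u_{t}-\Delta_{V}u+\hat c(x,t)u\ge 0$ on $V\times(0,T]$ by a pointwise case distinction. Because $\bar u$ and $\underline u$ take values in $[\sigma,A]$, the Lipschitz bound \eqref{4.51} applies and yields $|f(x,t,\bar u)-f(x,t,\underline u)|\le M|\bar u-\underline u|=M|u|$. Subtracting the defining inequality for $\underline u$ from that for $\bar u$ gives $u_{t}-\Delta_{V}u\ge f(x,t,\bar u)-f(x,t,\underline u)\ge -M|u|$. At points with $\bar u\ge\underline u$ one has $u\ge 0$ and $\hat c=M$, so $-M|u|=-Mu=-\hat c u$; at points with $\bar u<\underline u$ one has $u<0$ and $\hat c=-M$, so $-M|u|=Mu=-\hat c u$. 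In either case $u_{t}-\Delta_{V}u+\hat c u\ge 0$. Since $u(x,0)=\bar u(x,0)-\underline u(x,0)\ge 0$ on $V$, Theorem \ref{maxc} (applied with $k=-\hat c$, which is bounded) gives $u\ge 0$ on $V\times[0,T]$, that is $\bar u\ge\underline u$.

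For the strict statement, suppose $\bar u(\cdot,0)\not\equiv\underline u(\cdot,0)$. Having just shown $\bar u\ge\underline u$ everywhere, the coefficient $\hat c$ equals $M$ identically, so $u$ satisfies $u_{t}-\Delta_{V}u+Mu\ge 0$ on $V\times(0,T]$ with $u(\cdot,0)\ge 0$, $\not\equiv 0$; Theorem \ref{smc} then yields $u(x,t)>0$ for all $x\in V$ and $t\in(0,T]$. I do not foresee a genuine obstacle: this is the exact boundary-free analogue of Theorem \ref{o}, and the only places demanding care are the sign bookkeeping in the two-case verification of the inequality for $u$ with the discontinuous coefficient $\hat c$, and the (trivial) observation that once the ordering is known $\hat c$ is smooth and constant, so the strong maximum principle Theorem \ref{smc} applies cleanly.
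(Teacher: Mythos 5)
Your proposal is correct and follows essentially the same route as the paper: the same difference $u=\bar u-\underline u$ with the sign-adapted coefficient $\hat c$, Theorem \ref{maxc} for the ordering, and Theorem \ref{smc} for strict positivity when $\bar u(\cdot,0)\not\equiv\underline u(\cdot,0)$. Your extra care with the two-case verification of the inequality and the remark that $\hat c\equiv M$ once the ordering is established only makes explicit what the paper leaves implicit.
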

\begin{proof}
If we define 
	\begin{equation*}
		\hat{c}(x,t)=\begin{cases}
			M,~\text{if}~\bar{u}(x,t)\ge \underline{u}(x,t), \\
			-M,~\text{if}~\bar{u}(x,t)< \underline{u}(x,t),
		\end{cases}
	\end{equation*}
	and $u=\bar{u}-\underline{u}$. Then $u$ satisfies 
	\begin{equation*}
		\begin{cases}
			u_{t}-\Delta_{{V}} u+ \hat{c}(x,t) u(x,t) \ge 0,~~&(x,t)\in V\times (0,T] ,\\
			u(x,0)\ge 0,~&x\in V.
		\end{cases}
	\end{equation*}
It follows from Theorem \ref{maxc} that $u(x,t)\ge 0$ for $x\in V$ and $t\in [0,T]$. Moreover, if $\bar{u}(x,0)\not\equiv \underline{u}(x,0)$, then we have $u(x,0)\not\equiv 0$, $x\in V$. Then, we use Theorem \ref{smc} to conclude that 
$
	u(x,t)>0~\text{in}~V \times (0,T]. 
	$ 
\end{proof}

\begin{Theorem}\label{sxc}
	Let $\overline{u}$, $\underline{u}$ be an upper and a lower solution of \eqref{34} respectively, and  $u_0:V \to \mathbb{R}$ be given. If 
	\begin{equation}\label{c1}
		f(x,t,z)\text{ is continuous with respect to}~ (t,z)\in[0,T]\times[\underline{\eta},\bar{\eta}]
	\end{equation}
	for every $x\in V$, $f$ satisfies the Lipschitz condition in $u\in [\underline{\eta},\bar{\eta}]$, i.e., there exists a positive constant $M$ such that 
	\begin{equation}\label{L2}
		|f(x,t,u)-f(x,t,v)|\le M |u-v|
	\end{equation}
	for all $(x,t)\in V \times(0,T],~u,v\in [\underline{\eta},\bar{\eta}]$, 
	then \eqref{34} admits a unique solution $u\in C^{1}(V\times[0,T])$ satisfying $\underline{u}\le u \le \overline{u}$ on $V\times[0,T]$, where $\underline{\eta}=\min\limits_{V\times[0,T]} \min \{\overline{u}, \underline{u}\}$ and $\overline{\eta}=\max\limits_{V\times[0,T]} \max \{\overline{u}, \underline{u} \}$.
\end{Theorem}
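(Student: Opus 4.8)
The plan is to imitate the proof of Theorem \ref{sxj1} (the initial-boundary value case), since the structure is identical: construct monotone iteration sequences sandwiched between $\underline{u}$ and $\overline{u}$, pass to the limit, and verify the limits solve \eqref{34}; uniqueness then follows from Theorem \ref{oc}.

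First I would define the order interval $\langle\underline{u},\overline{u}\rangle=\{u\in C^{0}(V\times[0,T])\mid \underline{u}\le u\le\overline{u}\text{ on }V\times[0,T]\}$. For $v$ in this interval, the continuity assumption \eqref{c1} gives that $H(x,t):=f(x,t,v(x,t))+Mv(x,t)\in C^{0}(V\times[0,T])\cap L^{1}(V\times(0,T))$ (finiteness of $V$ makes the $L^1$ condition automatic once the function is continuous on $[0,T]$), so Theorem \ref{3.3} produces a unique solution $u\in C^{1}(V\times[0,T])$ of
\begin{equation*}
\begin{cases}
u_{t}-\Delta_{V}u+Mu=H(x,t), & (x,t)\in V\times(0,T],\\
u(x,0)=u_{0}(x), & x\in V.
\end{cases}
\end{equation*}
This defines a map $\mathcal{F}\colon\langle\underline{u},\overline{u}\rangle\to C^{0}(V\times[0,T])$, $u=\mathcal{F}v$. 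The key monotonicity step is: if $u\le v$ on $V\times[0,T]$ with both in the order interval, then $w:=\mathcal{F}v-\mathcal{F}u$ satisfies $w_{t}-\Delta_{V}w+Mw=f(x,t,v)-f(x,t,u)+M(v-u)\ge 0$ on $V\times(0,T]$ (using the Lipschitz bound \eqref{L2} and $M>0$) and $w(x,0)=0$; Theorem \ref{maxc} then gives $w\ge 0$, i.e.\ $\mathcal{F}$ is monotone.

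Next I would set $\underline{u}_{0}=\underline{u}$, $\overline{u}_{0}=\overline{u}$, $\underline{u}_{k}=\mathcal{F}\underline{u}_{k-1}$, $\overline{u}_{k}=\mathcal{F}\overline{u}_{k-1}$, and prove by induction the chain
\begin{equation*}
\underline{u}\le\underline{u}_{1}\le\cdots\le\underline{u}_{k}\le\overline{u}_{k}\le\cdots\le\overline{u}_{1}\le\overline{u}.
\end{equation*}
The base case $\underline{u}\le\underline{u}_{1}$ follows by applying Theorem \ref{maxc} to $z:=\underline{u}_{1}-\underline{u}$, which satisfies $z_{t}-\Delta_{V}z+Mz\ge 0$ and $z(x,0)\ge 0$ because $\underline{u}$ is a lower solution; similarly $\overline{u}_{1}\le\overline{u}$. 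The middle inequality $\underline{u}_{k}\le\overline{u}_{k}$ comes from $\underline{u}\le\overline{u}$ (Theorem \ref{oc}) and monotonicity of $\mathcal{F}$; the successive inequalities propagate by induction exactly as in Theorem \ref{sxj1}, using monotonicity of $\mathcal{F}$ again. Hence the pointwise limits $\underline{u}_{\infty}:=\lim_{k}\underline{u}_{k}$ and $\overline{u}_{\infty}:=\lim_{k}\overline{u}_{k}$ exist and lie in $\langle\underline{u},\overline{u}\rangle$.

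Finally I would upgrade the pointwise limits to genuine solutions. Since $V$ is finite, for each fixed $x$ the sequence $\{\overline{u}_{k}(x,\cdot)\}$ is monotone and bounded, hence converges pointwise on $[0,T]$; integrating the ODE form $\overline{u}_{k}(x,t_{2})-\overline{u}_{k}(x,t_{1})=\int_{t_{1}}^{t_{2}}(\Delta_{V}\overline{u}_{k}-M\overline{u}_{k}+f(x,t,\overline{u}_{k-1})+M\overline{u}_{k-1})\,dt$ and letting $k\to\infty$ (dominated convergence, using boundedness of the integrand on the finite graph and continuity of $f$ in \eqref{c1}) shows $\overline{u}_{\infty}$ satisfies the integral identity, hence $\overline{u}_{\infty}\in C^{1}(V\times[0,T])$ and solves \eqref{34}; likewise for $\underline{u}_{\infty}$. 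Any solution $u$ with $\underline{u}\le u\le\overline{u}$ is simultaneously an upper and a lower solution, so Theorem \ref{oc} forces $\underline{u}_{\infty}\le u\le\overline{u}_{\infty}$ and also $\overline{u}_{\infty}\le u$ — giving $u=\overline{u}_{\infty}=\underline{u}_{\infty}$ and uniqueness. The main obstacle I anticipate is the passage to the limit in the regularity: one must justify that the pointwise monotone limit is actually $C^{1}$ in $t$ and that differentiation commutes with the limit, which is handled cleanly via the integral formulation and the fact that $\Delta_{V}$ is a finite linear combination of point evaluations, so $\Delta_{V}\overline{u}_{k}\to\Delta_{V}\overline{u}_{\infty}$ pointwise with no loss.
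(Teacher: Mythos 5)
Your proposal is correct and follows essentially the same route as the paper's own proof: the same operator $\mathcal{F}$ built from Theorem \ref{3.3}, the same monotonicity argument via Theorem \ref{maxc}, the same inductive sandwich of iterates, and the same passage to the limit through the integral identity. The only difference is that you make the uniqueness step explicit via Theorem \ref{oc} (any solution in the order interval is both an upper and a lower solution), which the paper leaves implicit; this is a harmless and correct addition.
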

\begin{proof}
	Define 
	\begin{equation*}\label{20220809}
		<\underline{u},\bar{u}>=\{u\in C^{0}(V \times[0,T]) | {\underline{u}\le u\le \bar{u}}~ \text{in}~ V \times[0,T] \}.
	\end{equation*} 
For any given $v\in <\underline{u},\bar{u}>$, we apply \eqref{c1} to obtain that
	$$
	H(x,t):=	f(x,t,v(x,t))+Mv(x,t) \in C^{0}(V \times [0,T]) . 
	$$
It follows from Theorem \ref{3.3} that the linear problem 
	\begin{equation*}\label{}
		\left\{\begin{array}{l}
			u_{t} (x, t)-\Delta_{V} u(x, t)+Mu(x,t)=H(x, t),~ x \in V,~ t \in(0, T], \\
			u(x, 0)=u_0(x), \quad x \in V
		\end{array}\right.
	\end{equation*}
	admits a unique solution $u(x,t)$.

	We now define $\mathcal{F}:<\underline{u},\bar{u}>\to X_{T}:=C^{0}(V\times [0,T])$ by $u=\mathcal{F}v$ and
\begin{equation*}
		\underline{u}_{0}=\underline{u},~ \underline{u}_{k}=\mathcal{F}\underline{u}_{k-1},~ \bar{u}_{0}=\bar{u},~ \bar{u}_{k}=\mathcal{F}\bar{u}_{k-1},~ k=1,2,\cdots.
\end{equation*}
Next, we claim that 
	\begin{equation}\label{x1}
		\underline{u}\le \underline{u}_{1}\le\cdots \le\underline{u}_{k}\le \bar{u}_{k}\le\cdots\le\bar{u}_{1}\le \bar{u}~\text{on}~V\text{~for ~all~} k\ge 1.
	\end{equation}
	
	For any $u, v\in<\underline{u},\bar{u}>$ satisfying $u\le v$ on $V\times[0,T]$. Let $w_1 = \mathcal{F} u$, $w_2 = \mathcal{F} v$ and $w=w_2 - w_1$. Then we can find $w$ satisfies the following problem
	\begin{equation*}\label{}
		\left\{\begin{array}{l}
			w_{t} -\Delta_{V} w+Mw= f(x,t,v)-f(x,t,u)+M(v-u)\ge 0~ in~ V\times(0,T], \\
			w(x, 0)= 0 ~\text{ on}~ V.
		\end{array}\right.
	\end{equation*}
It follows from Theorem \ref{maxc} that $w\ge 0$ on $V\times [0,T]$. This implies that $w_1=\mathcal{F}u\le\mathcal{F} v=w_2$ on $V\times[0,T]$. By Theorem \ref{oc}, we have $\bar{u}\ge \underline{u}$ in $V\times[0,T]$. It's easily seen that $\underline{u}_{k}\le \bar{u}_{k}$ in ${V }\times (0,T]$ for any integer $k\ge 1$ by induction. 
	
We shall show that $\underline{u}_{k+1}\ge \underline{u}_{k}$ on $V\times[0,T]$ for all $k\ge 1$ by induction. Let $z=\underline{u}_{1}-\underline{u}$; then 
	\begin{equation*}\label{}
		\left\{\begin{array}{l}
			z_{t} (x, t)-\Delta_{V} z(x, t)+M z \ge 0,~ x \in V,~ t \in(0, T], \\
			z(x, 0)=0, \quad x \in V.
		\end{array}\right.
	\end{equation*}
In view of Theorem \ref{maxc}, we see that $\underline{u}\le \underline{u}_{1}$ on $V\times[0,T]$.
We assume that $\underline{u}_{k+1}\ge \underline{u}_{k}$ on $V\times[0,T]$. Denote $U:=\underline{u}_{k+2}-\underline{u}_{k+1}$. Then we apply Theorem \ref{maxc}
	\begin{equation*}\label{}
		\left\{\begin{array}{l}
			U_{t} (x, t)-\Delta_{V} U(x, t)+MU(x,t)= f(x,t,\underline{u}_{k+1})-f(x,t,\underline{u}_{k})+M(\underline{u}_{k+1}-\underline{u}_{k}) \ge 0~ \\
			\qquad\qquad\qquad\qquad\qquad\qquad\qquad\qquad\qquad\qquad\qquad\qquad\qquad\qquad\qquad	in~ V\times(0,T], \\
			U(x, 0)\ge 0 ~\text{ on}~ V.
		\end{array}\right.
	\end{equation*}
to conclude that $\underline{u}_{k+2}\ge \underline{u}_{k+1}$ on $V\times[0,T]$.
	Therefore, by induction, we see that $\underline{u}_{k+1}\ge \underline{u}_{k}$ on $V\times[0,T]$ for all $k\ge 1$. Similarly, since $\bar{u}_{k}$ satisfies 
	\begin{equation}\label{jc}
		\left\{\begin{array}{l}
			(\bar{u}_{k})_{t} -\Delta_{V} \bar{u}_{k}+M\bar{u}_{k}=f(x,t,\bar{u}_{k-1})+M\bar{u}_{k-1}(x,t),~ x \in V,~ t \in(0, T], \\
			\bar{u}_{k}(x, 0)=u_0(x), \quad x \in V.
		\end{array}\right.
	\end{equation}
We argument similar to those shown above imply via induction that 
	$\bar{u}_{k+1}\le \bar{u}_{k}$ on $V\times[0,T]$ for all $k\ge 1$.
Thus, \eqref{x1} holds.	
	
	Therefore, we can define 
	$
	\underline{u}(x,t):=\lim\limits_{k\to +\infty} \underline{u}_{k}(x,t),~\bar{u}(x,t):=\lim\limits_{k\to +\infty} \bar{u}_{k}(x,t).
	$
For any given $x\in V$ and any $(t_1,t_2)\subset [0,T]$, it follows from the first equality in \eqref{jc} that
	\begin{equation}\label{j2c}
		\bar{u}_{k}(x,t_2)-\bar{u}_{k}(x,t_1)=\int_{t_1}^{t_2}\left( \Delta_{{V}} \bar{u}_{k}-M\bar{u}_{k}+f(x,t,\bar{u}_{k-1})+M\bar{u}_{k-1} \right)dt.
	\end{equation}
As $k\to +\infty$ in \eqref{j2c}, $\overline{u}$ satisfies 
	$$
	\bar{u}(x,t_2)-\bar{u}(x,t_1)=\int_{t_1}^{t_2}\left( \Delta_{{V}} \bar{u}-M\bar{u}+f(x,t,\bar{u})+M\bar{u}~ \right)dt.
	$$
	This implies that 
	\begin{equation*}
		\bar{u}_{t} -\Delta_{V} \bar{u}+M\bar{u}=f(x,t,\bar{u})+M\bar{u}(x,t)~\text{for}~x\in V,~t\in [0,T].
	\end{equation*}
As $k\to\infty$ in the second equality of \eqref{jc}, we obtain 
	$	
	\bar{u}(x, 0)=u_0(x), ~ x \in V	.
	$
In other word, $\bar{u}\in C^{1}(V\times[0,T])$ satisfies \eqref{34}. Similarly, $\underline{u}\in C^{1}(V\times[0,T])$ satisfies \eqref{34}.
\end{proof}

\subsection{The upper and lower solutions method for elliptic problems}
\subsubsection{The elliptic equation with boundary}
In this subsection, we establish the upper and lower solutions method for the problem \eqref{t}.

 Define 
\begin{equation*}
	H_{0}^{1}(\bar{\Omega}):=\{u\in H^{1}(\Omega) | u(x)=0~\text{for}~x\in \partial{\Omega}\}
\end{equation*}
and 
\begin{equation*}
	||u||_{\bar{\Omega }}:=\left( \int\limits_{\bar{\Omega}} \left(|\nabla u|^{2}+u^{2}\right)d\mu\right) ^{\frac{1}{2}},~\forall ~u\in H_{0}^{1}(\bar{\Omega}). 
\end{equation*}
It is easy to check that $\left( H_{0}^{1}(\bar{\Omega}), \| \cdot\|_{\bar{\Omega}}\right) $ is a Banach space.
\begin{Definition}
	A function $u:\bar{\Omega}\to \mathbb{R}$ is called an upper (a lower) solution of \eqref{t}, if 
	\begin{equation*}\label{}
		\begin{cases}
			-\Delta_{{\Omega}} u\ge (\le)~ f(x,u),~&x\in \Omega,  \\
			\mathcal{B} [u]\ge (\le)~ \phi(x),~&x\in \partial{\Omega}.
		\end{cases}
	\end{equation*}
\end{Definition}

 \begin{Theorem}\label{44}
 	Let $\bar{u}$ and $\underline{u}$ be an upper and a lower solution of \eqref{t} respectively and $\phi:\bar{\Omega}\to \mathbb{R}$ be given. Denote $\underline{c}:=\min\limits_{\bar{\Omega}}\underline{u}$, $\overline{c}:=\max\limits_{\bar{\Omega}}\bar{u}$. Suppose that $\underline{u}\le \bar{u}$ on $\bar{\Omega}$ and that there exists constant $M>0$ such that 
 	\begin{equation}\label{L3}
 		f(x,u)-f(x,v)\ge -M(u-v)~\forall x\in \overline{\Omega},~u,v\in [\underline{c},\overline{c}] .
 	\end{equation}
 Then there exist sequences $\{\underline{u}_{m} \}_{m=1}^{+\infty}$ and $\{ \overline{u}_{m}  \}_{m=1}^{+\infty}$ such that $\{\underline{u}_{m} \}_{m=1}^{+\infty}$ is monotone nondecreasing with respect to $m$, $\{ \overline{u}_{m}  \}_{m=1}^{+\infty}$ is monotone nonincreasing with respect to $m$, 
 \begin{equation*}
 	\underline{u}\le \underline{u}_{m}\le \overline{u}_{m} \le  \overline{u}~\forall m\ge 1,
 \end{equation*} 
and 
\begin{equation*}
	\underline{u}_{m} \to \tilde{u},~\overline{u}_{m} \to \hat{u}~\text{as}~m\to +\infty,
\end{equation*}
where $\tilde{u}$ and $\hat{u}$ are the minimal and maximal solutions of \eqref{t} in the following sense: if $u$ is a solution to \eqref{t} satisfying $\underline{u}\le u\le \bar{u}$ on $\bar{\Omega}$, then $ \tilde{u}\le u \le \hat{u}$ on $\bar{\Omega}$.
 \end{Theorem}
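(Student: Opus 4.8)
The plan is to run a monotone iteration based on a linearized elliptic problem and to close it with the comparison principle of Theorem~\ref{25}. Fix $M>0$ as in \eqref{L3}. Given any $v\colon\bar\Omega\to\mathbb R$ with $\underline u\le v\le\bar u$, consider the linear problem
\begin{equation*}
  -\Delta_\Omega u+Mu=f(x,v(x))+Mv(x),\quad x\in\Omega,\qquad \mathcal Bu=\phi(x),\quad x\in\partial\Omega .
\end{equation*}
First I would argue that this problem has exactly one solution $u=:\mathcal Fv$. Since $\Omega$ is finite, the unknowns $u$ (on $\Omega$ when $\mathcal Bu=u$, and on $\bar\Omega$ together with the boundary equations when $\mathcal Bu=\frac{\partial u}{\partial_\Omega n}$) satisfy a square linear system; applying Theorem~\ref{25} with $\vec b=0$ and $c\equiv M>0$ to the difference of two solutions shows the associated homogeneous system has only the trivial solution, hence the system is invertible. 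Alternatively, when $\mathcal Bu=u$ one may obtain $u$ as the unique minimizer over an affine translate of $H^1_0(\bar\Omega)$ of the strictly convex, coercive functional $\tfrac12\int_{\bar\Omega}|\nabla u|^2+\tfrac M2\int_{\bar\Omega}u^2-\int_{\bar\Omega}(f(\cdot,v)+Mv)u$, and analogously for the Neumann condition.

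Next I would record the structural properties of $\mathcal F$. \emph{Monotonicity}: if $\underline u\le v_1\le v_2\le\bar u$ then $w:=\mathcal Fv_2-\mathcal Fv_1$ satisfies $-\Delta_\Omega w+Mw=[f(x,v_2)-f(x,v_1)]+M(v_2-v_1)\ge0$ by \eqref{L3} and $\mathcal Bw=0$, so $w\ge0$ on $\bar\Omega$ by Theorem~\ref{25}, giving $\mathcal Fv_1\le\mathcal Fv_2$. \emph{Compatibility}: since $\underline u$ is a lower solution, $z:=\mathcal F\underline u-\underline u$ satisfies $-\Delta_\Omega z+Mz\ge0$ in $\Omega$ and $\mathcal Bz\ge0$ on $\partial\Omega$, whence $\underline u\le\mathcal F\underline u$ by Theorem~\ref{25}; symmetrically $\mathcal F\bar u\le\bar u$. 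Together these show $\mathcal F$ maps the order interval $[\underline u,\bar u]$ into itself. Setting $\underline u_0=\underline u$, $\bar u_0=\bar u$, $\underline u_m=\mathcal F\underline u_{m-1}$, $\bar u_m=\mathcal F\bar u_{m-1}$, an induction using monotonicity and the hypothesis $\underline u\le\bar u$ yields $\underline u\le\underline u_1\le\cdots\le\underline u_m\le\bar u_m\le\cdots\le\bar u_1\le\bar u$, so $\{\underline u_m\}$ is nondecreasing, $\{\bar u_m\}$ is nonincreasing, and both stay in $[\underline u,\bar u]$.

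For each vertex $x$ the sequences $\underline u_m(x)$ and $\bar u_m(x)$ are monotone and bounded, hence convergent; set $\tilde u(x):=\lim_m\underline u_m(x)$ and $\hat u(x):=\lim_m\bar u_m(x)$. Passing to the limit in $-\Delta_\Omega\underline u_m+M\underline u_m=f(x,\underline u_{m-1})+M\underline u_{m-1}$ and $\mathcal B\underline u_m=\phi$ is immediate on the linear side (these are finite combinations of the $\underline u_m$), and on the right-hand side it uses continuity of $f(x,\cdot)$; this gives $-\Delta_\Omega\tilde u=f(x,\tilde u)$ with $\mathcal B\tilde u=\phi$, i.e. $\tilde u$ solves \eqref{t}, and likewise $\hat u$. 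Finally, if $u$ is any solution of \eqref{t} with $\underline u\le u\le\bar u$, then $u=\mathcal Fu$ by uniqueness for the linear problem, so $\underline u_0\le u\le\bar u_0$ together with the monotonicity of $\mathcal F$ gives $\underline u_m\le u\le\bar u_m$ for all $m$ by induction, and therefore $\tilde u\le u\le\hat u$ — the asserted minimality and maximality.

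I expect the only genuinely delicate point to be the linear solvability step and its uniform treatment of the two boundary operators $\mathcal Bu=u$ and $\mathcal Bu=\frac{\partial u}{\partial_\Omega n}$, together with the continuity of $f(x,\cdot)$ tacitly needed to pass to the limit; once these are in place the remainder is a routine monotone-iteration argument built entirely on Theorem~\ref{25}.
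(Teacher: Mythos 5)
Your argument is correct and follows the same monotone-iteration skeleton as the paper: linearize with the shift $M$, let $\mathcal F$ (the paper's $T$) be the solution operator of $-\Delta_\Omega u+Mu=f(x,v)+Mv$, $\mathcal Bu=\phi$, prove monotonicity of $\mathcal F$ and the compatibility $\underline u\le\mathcal F\underline u$, $\mathcal F\bar u\le\bar u$ from Theorem~\ref{25}, iterate, pass to the limit, and get minimality/maximality from $u=\mathcal Fu$ plus induction. The genuine difference is in the solvability of the linearized problem, which is where the paper spends most of its effort: it uses Lax--Milgram with coercivity estimates on $H^1_0(\bar\Omega)$ in the Dirichlet case, and in the Neumann case first solves an auxiliary problem via the cited Chung--Berenstein result and then expands in the Neumann eigenfunctions of Proposition~\ref{Pro1}. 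Your route --- note that on a finite graph the problem is a square linear system and that Theorem~\ref{25} applied to $\pm$ the difference of two solutions (with $\vec b=0$, $c\equiv M>0$) gives injectivity, hence invertibility --- is more elementary, treats both boundary operators $\mathcal Bu=u$ and $\mathcal Bu=\frac{\partial u}{\partial_\Omega n}$ uniformly, and avoids the Neumann auxiliary-problem bookkeeping; what the paper's variational/spectral route buys is machinery that would survive on infinite graphs, but nothing extra here. Two small remarks: (i) as you correctly flag, passing to the limit in $f(x,\underline u_{m-1})$ tacitly uses continuity of $f(x,\cdot)$ on $[\underline c,\overline c]$; the paper uses this silently as well, since \eqref{L3} alone (read as a one-sided Lipschitz bound for $u\ge v$, i.e.\ $f(x,\cdot)+M\cdot$ nondecreasing) does not force $f(x,\underline u_m(x))\to f(x,\tilde u(x))$; (ii) \eqref{L3} must indeed be read for $u\ge v$, which is exactly the form your monotonicity and compatibility steps use, and which the paper's Remark following the theorem confirms is the intended hypothesis.
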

\begin{proof}
	For any function $v\in <\underline{u},\bar{u}>:=\{v\in R\bar{\Omega}~ |~ \underline{u}(x)\le v(x) \le \bar{u}(x)~\text{on}~\bar{\Omega}  \}$. Considering the following problem
	\begin{equation}\label{32}
		\begin{cases}
			-\Delta_{{\Omega}} u+ Mu= f(x,v)+Mv,~x\in\Omega,\\
			\mathcal{B} u= \phi,~x\in \partial{\Omega}.
		\end{cases}
	\end{equation}

Now, we are going to prove that \eqref{32} has a unique solution. We first deal with the case that $\mathcal{B}u=u$. 
Suppose that $u$ satisfies \eqref{32}.
Let $w=u-\phi$; then $w$ satisfies 
\begin{equation*}
	\begin{cases}
		-\Delta_{{\Omega}} w+Mw= f(x,v)+Mv-M\phi+ \Delta_{{\Omega}} \phi=:g(x),~&x\in \Omega, \\
		w=0, &x\in \partial{\Omega}.
	\end{cases}
\end{equation*}
If we define
\begin{equation}\label{561}
	B(u,v):=\int\limits_{\bar\Omega}  \left(\Gamma(u,v)+Muv\right) d\mu,~u,v\in H_{0}^{1}(\bar{\Omega}).
\end{equation}
It follows from Cauchy Schwartz inequality and H$\ddot{\text{o}}$lder inequality to deduce that
\begin{equation*}
	\begin{aligned}
		|B(u, v)| 
		& \leq \int\limits_{\bar{\Omega}} \left[\sum_{y\in \bar{\Omega}:y \sim x} \frac{\omega_{x y}}{2 \mu(x)}(u(y)-u(x))(v(y)-v(x))\right] d \mu+M \left(\int\limits_{\bar{\Omega}} u^{2} d \mu \right)^{\frac{1}{2}}\left(\int\limits_{\bar{\Omega}} v^{2} d \mu\right)^{\frac{1}{2}} \\		
		&=\int\limits_{\bar{\Omega}}[\Gamma(u, u)]^{\frac{1}{2}}[\Gamma(v, v)]^{\frac{1}{2}} d \mu+ M ||u||_{L^2(\bar{\Omega})}||v||_{L^2(\bar{\Omega})} \\
		& \leq\left(\int\limits_{\bar{\Omega}}|\nabla u|^{2} d \mu\right)^{\frac{1}{2}}\left(\int_{\bar{\Omega}}|\nabla v|^{2} d \mu \right)^{\frac{1}{2}}+M||u||_{L^2(\bar{\Omega})}||v||_{L^2(\bar{\Omega})}\\
		&\le (1+M)\left[ \left(\int\limits_{\bar{\Omega}}|\nabla u|^{2} d \mu\right)^{\frac{1}{2}}\left(\int_{\bar{\Omega}}|\nabla v|^{2} d \mu \right)^{\frac{1}{2}}+||u||_{L^2(\bar{\Omega})}||v||_{L^2(\bar{\Omega})}\right] \\
		&\le (1+M)\|u \|_{H_{0}^{1}(\bar{\Omega})} \|v \|_{H_{0}^{1}(\bar{\Omega})}.
	\end{aligned}
\end{equation*}
In the light of \eqref{561} , we have 
\begin{equation*}\label{}
	\begin{aligned}
			B(u,u)\ge \int\limits_{\bar{\Omega}} \left( |\nabla u|^2 + M u^2\right) d\mu \ge \min\{1,M \} \int\limits_{\bar{\Omega}} \left(|\nabla u|^{2} + u^{2}\right) d\mu
			=\min\{1, M \} \|u \|_{H_{0}^{1}(\bar{\Omega})}.
	\end{aligned}
\end{equation*}
It is easy to check that $B$ is bilinear. Then, by the Lax-Milgram theorem, for any function $g:$ $\bar{\Omega}\to \mathbb{R}$, there exists a unique function $U\in H_{0}^{1}(\bar{\Omega})$ such that $
	B(U,v)=\int_{\bar\Omega} gv d\mu~\text{~for~any~}v\in H_{0}^{1}(\bar{\Omega}).
$
This is equivalent to 
\begin{equation*}\label{}
	\begin{cases}
		-\Delta_{{\Omega}} U+M U= g~\text{in}~\Omega,\\
		U=0~\text{ on}~\partial \Omega.
	\end{cases}
\end{equation*}
That is to say, \eqref{32} admits a unique solution $U$. 
 
In the following, we consider the case that $\mathcal{B}u= \frac{\partial u}{\partial_{\Omega} n}$. Clearly, there exists a function $I(x)$ such that 
$
 -	\int\limits_{\Omega} I(x) d\mu = \int\limits_{\partial\Omega} \phi(x) d \mu.
$
Then, we use \cite[Theorem 3.8]{CB} to conclude that the elliptic problem 
\begin{equation*}
	\begin{cases} -\Delta_{\Omega} Z(x)=I(x) & x \in \Omega, \\
		\frac{\partial Z}{\partial_{\Omega} n}(x)=\phi(x)  , & x \in \partial{\Omega} \end{cases}
\end{equation*}
admits a unique solution $Z(x)$.
 Suppose that $u$ satisfies \eqref{32}. Let $w=u-Z$. It's easy to check that $w$ satisfies 
 \begin{equation}\label{42}
 	\begin{cases} -\Delta_{\Omega} w(x)+Mw=f(x,v)+Mv-I-MZ=:F(x), & x \in \Omega, \\
 		\frac{\partial w}{\partial_{\Omega} n}(x)=0  , & x \in \partial{\Omega}. \end{cases}
 \end{equation}
We now consider the following expansions
$$
	w(x)=\sum_{i=1}^{n} c_{i} \Phi_{i}(x),~
F(x)=\sum_{i=1}^{n} F_{i} \Phi_{i}(x),~x\in \Omega,
$$
where 
$
	c_i=\sum\limits_{x \in \Omega} w(x)\Phi_{i} (x),
	F_i=\sum\limits_{x \in \Omega} F(x)\Phi_{i} (x).
$
As we substitute these expansions into \eqref{42}, then it follows that
$$
	c_{i}=\frac{F_{i}}{K_{i}+ M},~i=1,\cdots,n.
$$
Therefore, we can conclude that 
\begin{equation}\label{q}
	w(x)=\sum_{i=1}^{n} \frac{F_{i}(x)}{K_{i}+M} \Phi_{i}(x)
\end{equation}
is a solution of \eqref{42}. Applying Theorems \ref{25}, we deduce that $w(x)$ defined by \eqref{q} is the unique solution to \eqref{42}. Thus $Z=u-w$ is the unique solution to \eqref{32}.
So that we can define an operator $T$ by $u=Tv$. 

Next, we first show that $T$ is monotone. For any $v,w\in <\underline{u},\bar{u}>$ and $v\le w$. Let $z=Tw$, $u=Tv$; then $h=z-u$ satisfies 
\begin{equation*}
	\begin{cases}
		-\Delta_{{\Omega}} h+Mh=\left[ f(x,w)-f(x,v)  \right] + M(w-v)\ge 0,~&x\in \Omega, \\
		\mathcal{B} h =0 ,&x\in\partial{\Omega}.
	\end{cases}
\end{equation*}
We use Theorem \ref{25} to deduce that $h\ge 0$ on $\bar{\Omega}$ and hence that 
$
	u=Tv\le z=Tw.
$

Define 
$$
	\underline{u}_{1}=T\underline{u},~	\underline{u}_{m+1}=T\underline{u}_{m},
	\overline{u}_{1}=T\overline{u},~	\overline{u}_{m+1}=T\overline{u}_{m}
$$
for $m=1,2,3,\cdots$. It follows from the monotonicity of $T$ to deduce that $\underline{u}_{1}\le \overline{u}_{1}$. Denote $v=\overline{u}-\overline{u}_{1}$. Then $v$ satisfies 
\begin{equation*}
	\begin{cases}
	-\Delta_{{\Omega}} v +Mv \ge f(x,\bar{u})+M\bar{u}- [f(x,\bar{u})+M\bar{u}]=0,~x\in \Omega,\\
	\mathcal{B} v \ge 0,~x\in \partial{\Omega}.
	\end{cases}
\end{equation*}
Due to Theorem \ref{25}, we obtain that
$
	v\ge 0~\text{ on}~\bar{\Omega},
$
i.e, $\bar{u} \ge \bar{u}_{1}$. Similarly, we can show that $\underline{u}\le \underline{u}_{1}$ on $\bar{\Omega}$. Thus, by induction, we have
\begin{equation*}
	\underline{u}\le \underline{u}_{1}\le \cdots \le \underline{u}_{m} \le \overline{u}_{m}\le \cdots \le \overline{u}_{1} \le \bar{u}~\text{ on}~\bar{\Omega}
\end{equation*}
for all $m\ge 1$.

Therefore, we can define 
$
	\tilde{u}(x):=\lim\limits_{m\to +\infty} \underline{u}_{m}(x)
$
and
$
	\hat{u}(x):=\lim\limits_{m\to +\infty} \overline{u}_{m}(x).
$
Letting $m\to +\infty$ in the following elliptic problem
\begin{equation*}
	\begin{cases}
		(-\Delta_{{\Omega}}+M)\underline{u}_{m+1}=M \underline{u}_{m}+f(x,\underline{u}_{m}),~x\in \Omega,\\
		\mathcal{B}\underline{u}_{m+1}=\phi,~x\in \partial{\Omega}.
	\end{cases}
\end{equation*} 
we obtain
$
	\tilde{u}=T\tilde{u},~\hat{u}=T\hat{u}.
$
That is to say, $\tilde{u}$ and $\hat{u}$ are solutions of \eqref{t}. Set $\underline{u}\le u\le \overline{u}$ be a solution of \eqref{t}. Then we have $u=Tu$. In view of $T$ is nondecreasing, we see that 
$
\underline{u}_{1}=T \underline{u}\le Tu=u \le T\overline{u}=\overline{u}_{1},
$
and thus, by induction,
\begin{equation*}\label{gd}
\underline{u}_{m}=T\underline{u}_{m-1}\le	 Tu=u\le T\overline{u}_{m-1}=\overline{u} _{m}~\text{on}~\bar{\Omega}~\text{ for ~all~}m\ge 1.	
\end{equation*}
As $m\to \infty$ in \eqref{gd}, we obtain that  
$
\tilde{u}\le u\le \hat{u}~\text{on}~\bar{\Omega}.
$
\end{proof}

\begin{Remark}
	Theorem \ref{44} still holds with the assumption \eqref{L3} replaced with 
	\begin{equation*}\label{L4}
		|f(x,u)-f(x,v)|\le M |u-v|.
	\end{equation*}
	 
\end{Remark}

\subsubsection{The elliptic equation without boundary} In this subsection, we establish the upper and lower solutions method for the problem \eqref{tc}.
\begin{Definition}
	A function $u$ is called an upper (a lower) solution of \eqref{tc}, if 
	\begin{equation*}\label{}
			-\Delta_{{V}} u-\vec{b}\cdot\nabla u+c(x)u \ge (\le)~ f(x,u),~x\in V.
	\end{equation*}
\end{Definition}

\begin{Theorem}\label{44c}
	Let $\bar{u}$ and $\underline{u}$ be an upper and a lower solution of \eqref{tc} respectively. Denote $\underline{c}:=\min\limits_{V}\underline{u}$, $\overline{c}:=\max\limits_{V}\bar{u}$. Suppose that $\underline{u}\le \bar{u}$ on $V$ and $b(x)\ge 0$ on $V$ and that there exists constant $M^{'}>0$ such that 
	\begin{equation}\label{L5}
		|f(x,u)-f(x,v)|\le M^{'}|u-v|~\forall x\in V,~u,v\in [\underline{c},\overline{c}] .
	\end{equation}
	Then there exist sequences $\{\underline{u}_{m} \}_{m=1}^{+\infty}$ and $\{ \overline{u}_{m}  \}_{m=1}^{+\infty}$ such that $\{\underline{u}_{m} \}_{m=1}^{+\infty}$ is monotone nondecreasing with respect to $m$, $\{ \overline{u}_{m}  \}_{m=1}^{+\infty}$ is monotone nonincreasing with respect to $m$, 
$
		\underline{u}\le \underline{u}_{m}\le \overline{u}_{m} \le  \overline{u}~\forall m\ge 1,
$
	and 
$
		\underline{u}_{m} \to \tilde{u},~\overline{u}_m \to \hat{u}~\text{as}~m\to +\infty,
$
	where $\tilde{u}$ and $\hat{u}$ are the minimal and maximal solutions of \eqref{tc} in the following sense: if $u$ is a solution to \eqref{tc} satisfying $\underline{u}\le u\le \bar{u}$ on $\bar{\Omega}$, then $\tilde{u}\le u \le \hat{u}$ on $\bar{\Omega}$.
\end{Theorem}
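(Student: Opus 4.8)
The plan is to repeat the monotone iteration of Theorem \ref{44}, with the boundary--value linear solver replaced by a linear solver on all of $V$ and with the maximum principle Theorem \ref{27} in place of Theorem \ref{25}. Enlarging $M'$ if necessary (the graph is finite, and \eqref{L5} only imposes a lower bound on $M'$), I may assume $c(x)+M'>0$ for every $x\in V$. For a fixed $v$ in $<\underline{u},\bar{u}>:=\{v\in RV\mid \underline{u}(x)\le v(x)\le \bar{u}(x)~\text{on}~V\}$, consider the linear problem
\[
	-\Delta_{V}u-\vec{b}\cdot\nabla u+\left(c(x)+M'\right)u=f(x,v(x))+M'v(x),\quad x\in V.
\]
Since $V$ has finitely many vertices this is a square linear system; its homogeneous version $-\Delta_{V}u-\vec{b}\cdot\nabla u+(c+M')u=0$ forces $u\equiv 0$, because Theorem \ref{27} applied to $u$ and to $-u$ gives $u\ge 0$ and $u\le 0$. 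Hence the system is uniquely solvable, and this defines an operator $T$ on $<\underline{u},\bar{u}>$ via $u=Tv$.

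Next I would show that $T$ is monotone nondecreasing. If $v\le w$ in $<\underline{u},\bar{u}>$, then $h:=Tw-Tv$ satisfies
\[
	-\Delta_{V}h-\vec{b}\cdot\nabla h+(c+M')h=[f(\cdot,w)-f(\cdot,v)]+M'(w-v)\ge 0,
\]
where the inequality uses \eqref{L5} in the form $f(\cdot,w)-f(\cdot,v)\ge -M'(w-v)$; Theorem \ref{27} then gives $h\ge 0$, i.e. $Tv\le Tw$. Using that $\underline{u}$ is a lower and $\bar{u}$ an upper solution of \eqref{tc}, the same maximum principle applied to $\bar{u}-T\bar{u}$ and to $T\underline{u}-\underline{u}$ yields $T\bar{u}\le\bar{u}$ and $\underline{u}\le T\underline{u}$. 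Setting $\underline{u}_{1}=T\underline{u}$, $\underline{u}_{m+1}=T\underline{u}_{m}$, $\bar{u}_{1}=T\bar{u}$, $\bar{u}_{m+1}=T\bar{u}_{m}$, an induction combining the monotonicity of $T$ with $\underline{u}\le\bar{u}$ and with $\underline{u}\le T\underline{u}$, $T\bar{u}\le\bar{u}$ produces the chain $\underline{u}\le\underline{u}_{1}\le\cdots\le\underline{u}_{m}\le\bar{u}_{m}\le\cdots\le\bar{u}_{1}\le\bar{u}$ on $V$ for every $m\ge 1$, so the two sequences are monotone and bounded.

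Because $V$ is finite the sequences converge pointwise: $\underline{u}_{m}\to\tilde{u}$ and $\bar{u}_{m}\to\hat{u}$. Passing to the limit in $(-\Delta_{V}-\vec{b}\cdot\nabla+c+M')\underline{u}_{m+1}=M'\underline{u}_{m}+f(\cdot,\underline{u}_{m})$ --- legitimate since all the operations are finite sums and $f(x,\cdot)$ is continuous by \eqref{L5} --- shows $\tilde{u}=T\tilde{u}$, i.e. $\tilde{u}$ solves \eqref{tc}; the same works for $\hat{u}$. For the extremality claim, any solution $u$ of \eqref{tc} with $\underline{u}\le u\le\bar{u}$ satisfies $u=Tu$, and monotonicity gives $\underline{u}_{m}\le u\le\bar{u}_{m}$ for all $m$ by induction, whence $\tilde{u}\le u\le\hat{u}$ after letting $m\to\infty$.

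The only point that needs real care is the unique solvability of the linear problem defining $T$: the drift term $\vec{b}\cdot\nabla u$ destroys the symmetry that made the variational (Lax--Milgram / eigenfunction) argument of Theorem \ref{44} convenient, which is why I route solvability through the maximum principle, using that uniqueness implies invertibility on the finite--dimensional space $RV$, rather than through an energy functional. Everything else is a routine transcription of the scheme in Theorem \ref{44}, with $\Omega$ replaced by $V$ and the boundary conditions removed.
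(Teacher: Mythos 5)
Your proposal is correct, and the monotone iteration itself (definition of $T$, monotonicity via Theorem \ref{27}, the chain $\underline{u}\le\underline{u}_1\le\cdots\le\underline{u}_m\le\bar{u}_m\le\cdots\le\bar{u}_1\le\bar{u}$, pointwise limits on the finite graph, passage to the limit, and the extremality argument) coincides with the paper's proof. Where you genuinely diverge is the unique solvability of the linear problem defining $T$: the paper introduces a constant $M>M'$ chosen so that $M+c_0-\tfrac{|\vec b|^2}{2}>1$ and runs a variational argument, verifying boundedness and coercivity of the nonsymmetric bilinear form $B(u,w)=\int_V\bigl[\Gamma(u,w)-(\vec b\cdot\nabla u)w+(c+M)uw\bigr]d\mu$ and invoking Lax--Milgram, whereas you exploit finite dimensionality: the operator $u\mapsto -\Delta_V u-\vec b\cdot\nabla u+(c+M')u$ is a square linear system on $RV$, the homogeneous equation has only the trivial solution by applying Theorem \ref{27} to $u$ and $-u$ (after enlarging $M'$ so that $c+M'>0$, which is harmless for \eqref{L5}), and injectivity gives invertibility. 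Your route is more elementary, sidesteps the coercivity estimate with the drift term entirely, and makes explicit the positivity condition $c+M'>0$ that the maximum principle needs (the paper secures it implicitly through its choice of $M$); the paper's variational argument, on the other hand, is the one that would survive in settings where the finite-dimensional ``injective implies surjective'' shortcut is unavailable. Both are valid here.
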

\begin{proof}
Define 
 $$<\underline{u},\bar{u}>:=\{u\in RV | \underline{u}\le u \le \bar{u}~\text{ on}~V\}.$$	
 For any $v\in <\underline{u},\bar{u}>$, we choose a sufficiently large $M>M'$ such that $M+c_0-\frac{|\vec{b}|^{2}}{2}>1$ with $c_0 =\min\limits_{{V} } c(x)$ and  consider the following problem
	\begin{equation}\label{32c}
			-\Delta_{{V}} u-\vec{b}\cdot\nabla u+c(x)u +Mu= f(x,v)+Mv,~x\in V.
	\end{equation} 
 
Denote $E(x)=M+c(x)$. If we define a bilinear form $B[\cdot,\cdot]$ by
\begin{equation*}\label{561,}
	B(u,w):=\int\limits_{V}  \left[\Gamma(u,w)-\left( \vec{b}\cdot\nabla u\right)w +Euw\right] d\mu
\end{equation*}
for $~u,w\in H^{1}(V)$.
Then we use Cauchy Schwartz inequality and H$\ddot{\text{o}}$lder inequality to obtain that
\begin{equation*}
	\begin{aligned}
		|B(u, w)| & \leq \int\limits_{V}  \left[\Gamma(u, w)+\sqrt{\sum_{x \in V}b^{2}(x)} |\nabla u||w|+E  |u||w| \right]d \mu \\	
		&= ||\nabla u||_{L^2(V)}||\nabla w||_{L^2(V)}+\sqrt{\sum_{x \in V}b^{2}(x)}	||\nabla u||_{L^2(V)} || w||_{L^2(V)}
	+E || u ||_{L^2(V)} ||w||_{L^2(V)} \\
		&\le \left( 1+E+\sqrt{\sum_{x \in V}b^{2}(x)} \right) \|u \|_{H^{1}(V)} \|w \|_{H^{1}(V)}.
	\end{aligned}
\end{equation*}
It's obviously that 
$$
	B(u,u)=\int\limits_{V}  \left(|\nabla u|^2 + E u^2-u\vec{b}\cdot \nabla u \right) d\mu.
$$
If we choose $\epsilon$ small enough so that $\epsilon|b|=\frac{1}{2}$, by Cauchy's inequality, we have 

\begin{equation*}\label{}
	\begin{aligned}
		B(u,u)&\ge\int\limits_{V}\left(|\nabla u|^2 + E u^2-|\vec{b}|\nabla u||u|\right)d\mu\\
		&\ge\int\limits_{V}\left(|\nabla u|^2 + E u^2\right) d\mu-|\vec{b}|\left(\int_{V} \epsilon|\nabla u|^{2}+\frac{1}{4\epsilon} u^{2} d\mu\right)\\
		&\ge \int\limits_{V} \left[\frac{1}{2} |\nabla u|^2 +\left(M+c_0-\frac{|\vec{b}|}{4\epsilon} \right)u^{2}  \right]d\mu\\
		&\ge \min\{\frac{1}{2}, M+c_0-\frac{|\vec{b}|^{2}}{2} \} \int\limits_{V}\left( |\nabla u|^{2} + u^{2} \right)d\mu .
	\end{aligned}
\end{equation*}
 
  Denote $F(x):=f(x,v)+Mv$, we use Lax-Milgram theorem to find a unique function $u\in H^{1}(V)$ such that $$B(u,g)=\int\limits_{V}Fg d\mu\;\forall g\in H^{1}(V).$$ 
  This is equivalent to 
 the problem \eqref{32c} admits a unique solution $u$. Now, we can define an operator $T$ by $u=Tv$ and show that $T$ is monotone. For any $v,w\in <\underline{u},\bar{u}>$ satisfying $v\le w$. Let $z=Tw$, $u=Tv$; then by \eqref{L5}, we can find $h=z-u$ satisfies 
	\begin{equation}
			-\Delta_{V} h-\vec{b}\cdot\nabla h+\left( c(x)+ M\right)h=\left[ f(x,w)-f(x,v)  \right] + M(w-v)\ge 0,~x\in V.
	\end{equation}
It follows from Theorem \ref{27} that $h\ge 0$ on $V$ and hence that 
$
		u=Tv\le Tw=z.
$
	
	Define
$$
		\underline{u}_{1}=T\underline{u},~	\underline{u}_{m+1}=T\underline{u}_{m},
		\overline{u}_{1}=T\overline{u},~	\overline{u}_{m+1}=T\overline{u}_{m}
$$
	for $m=1,2,3,\cdots$. By the monotonicity of $T$, $\underline{u}_{1}\le \overline{u}_{1}$. Setting $v=\overline{u}-\overline{u}_{1}$; then we can find $v$ satisfies 
	\begin{equation*}
			-\Delta_{V} v-\vec{b}\cdot\nabla v+\left( c(x)+ M\right)v \ge f(x,\bar{u})+M\bar{u}- [f(x,\bar{u})+M\bar{u}]=0,~x\in V.
	\end{equation*}
Applying Theorem \ref{27}, we have
	$
		v\ge 0~\text{on}~V,
$
	i.e, $\bar{u} \ge \bar{u}_{1}$ on $V$. Similarly, $\underline{u}\le \underline{u}_{1}$ on $V$. Furthermore, by induction, we have
$$
		\underline{u}\le \underline{u}_{1}\le \cdots \le \underline{u}_{m} \le \overline{u}_{m}\le \cdots \le \overline{u}_{1} \le \bar{u}~\text{on}~V
$$
	for all $m\ge 1$.
	
Therefore, we can define 
$
		\tilde{u}(x):=\lim\limits_{m\to +\infty} \underline{u}_{m}(x)
$
	and
$
		\hat{u}(x):=\lim\limits_{m\to +\infty} \overline{u}_{m}(x).
$
Considering the elliptic problem
$$(-\Delta_{{V}}-\vec{b}\cdot\nabla+c(x)+M)\underline{u}_{m+1}=M \underline{u}_{m}+f(x,\underline{u}_{m}),$$ we obtain
$
		\tilde{u}=T\tilde{u},~\hat{u}=T\hat{u}
$ as $m\to +\infty$.
	Thus, $\tilde{u}$ and $\hat{u}$ are solutions of \eqref{tc}. Let $u\in <\underline{u},\bar{u}>$ be a solution of {\eqref{tc}}; it's obvious that $u$ satisfies $u=Tu$. Since $T$ is nondecreasing, we see that 
$
		 \underline{u}_{1}=T\underline{u} \le Tu=u \le T\overline{u}=\overline{u}_{1},
$
	and thus, by induction,
	\begin{equation}\label{c2}
	 \underline{u}_{m}=T\underline{u}_{m-1}\le Tu=u\le T\overline{u}_{m-1}=\overline{u} _{m}~\text{on}~V	
	\end{equation}
for all $m\ge 2$.
	Letting $m\to \infty$ in \eqref{c2}, we deduce that  
$
	\tilde{u}\le u\le \hat{u}~\text{on}~V.
$
\end{proof}

\section{The extinction and propagation of solutions for parabolic problems}
In this section, we study the monotonicity and convergence of solutions of the initial-boundary value problem \eqref{IB} and the initial value problem \eqref{5c}.
\subsection{Monotonicity and convergence}
\subsubsection {The parabolic equation of the initial-boundary problem}
In this subsection, we investigate the monotonicity and convergence of solutions of the problem
\begin{equation}\label{IB}
	\begin{cases}u_{t}-\Delta_{{\Omega}} u=f(x, u), & x \in \Omega, \quad 0<t\lesssim T, \\ 
	\mathcal{B}u=g(x), & x \in \partial \Omega,~ 0<t\lesssim T, \\ u(x, 0)=\varphi(x), & x \in \Omega,\end{cases}
\end{equation}
where  $g:\partial{\Omega} \to \mathbb{R}$ and $\varphi:\Omega:\to \mathbb{R}$ are given functions and $0<T\le \infty$.

In the following, the monotonicity of solutions to \eqref{IB} is proved when initial data $\varphi$ is a subsolution or supersolution to \eqref{t}.

\begin{Theorem}
	Assume that $f$ satisfies \eqref{4.5} for any $\sigma,\omega\in \mathbb{R}$. If the initial data $\varphi$ of \eqref{IB} is a subsolution (a supersolution) of \eqref{t}, and the problem \eqref{IB} admits a solution $u$ which is defined in $\bar{\Omega}\times [0,T]$ for some $T\le \infty$. Then $u$ is monotone nondecreasing (nonincreasing) in time $t$, i.e., $u(x,s)\le(\ge) u(x,t)$ for $0\le s<t\lesssim T$ and $x\in \bar{\Omega}$.
\end{Theorem}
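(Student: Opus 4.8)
The plan is to treat the case in which $\varphi$ is a subsolution of \eqref{t}, so that the assertion is that $u$ is nondecreasing in $t$ (the supersolution case follows by reversing every inequality). The argument rests on two features of \eqref{IB}, which is a special case of \eqref{1s}: the equation is autonomous in $t$, so for each $h>0$ the shifted function $u_h(x,t):=u(x,t+h)$ again solves the parabolic equation in \eqref{IB} on $\bar\Omega\times[0,T-h]$ with the same time-independent boundary datum $g$ and with initial value $u(\cdot,h)$; and the constant-in-time function $\varphi$ is a lower solution of \eqref{IB}. Granting these, I would first deduce $u\ge\varphi$ everywhere, hence $u(\cdot,h)\ge\varphi=u(\cdot,0)$, and then compare $u_h$ with $u$ via the parabolic maximum principle Theorem \ref{max}.

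\emph{Step 1: $u\ge\varphi$.} Since $\partial_t\varphi\equiv0$ and $\varphi$ is an elliptic subsolution of \eqref{t}, i.e.\ $-\Delta_\Omega\varphi\le f(x,\varphi)$ on $\Omega$ and $\mathcal B\varphi\le g$ on $\partial\Omega$, the time-independent function $\varphi$ satisfies $\varphi_t-\Delta_\Omega\varphi=-\Delta_\Omega\varphi\le f(x,\varphi)$, $\mathcal B\varphi\le g$ and $\varphi(x,0)=\varphi(x)$, so it is a lower solution of \eqref{IB}; the solution $u$ is trivially an upper solution. Fix any finite $T'\lesssim T$; on the finite set $\bar\Omega\times[0,T']$ the functions $u$ and $\varphi$ take values in some bounded interval $[\sigma,\omega]$, on which \eqref{4.5} holds by hypothesis, so Theorem \ref{o} gives $u\ge\varphi$ on $\bar\Omega\times[0,T']$. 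Letting $T'$ exhaust $[0,T)$ yields $u\ge\varphi$ on $\bar\Omega\times[0,T]$; in particular $u(x,h)\ge u(x,0)$ for every $h\in(0,T)$ and $x\in\bar\Omega$.

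\emph{Step 2: comparison with the time shift.} Fix $h\in(0,T)$ and set $w(x,t):=u(x,t+h)-u(x,t)$ on $\bar\Omega\times[0,T-h]$; since $u\in C^1(\Omega\times[0,T])\cap C^0(\bar\Omega\times[0,T])$, the function $w$ has the regularity required by Theorem \ref{max}. Subtracting the equation in \eqref{IB} for $u$ from that for $u(\cdot,\cdot+h)$,
\begin{equation*}
  w_t(x,t)-\Delta_\Omega w(x,t)=f\bigl(x,u(x,t+h)\bigr)-f\bigl(x,u(x,t)\bigr)=k(x,t)\,w(x,t),
\end{equation*}
where $k(x,t):=\bigl[f(x,u(x,t+h))-f(x,u(x,t))\bigr]/w(x,t)$ wherever $w(x,t)\neq0$ and $k(x,t):=0$ otherwise; by \eqref{4.5}, $|k(x,t)|\le M$, so $k$ is bounded on $\Omega\times(0,T-h]$. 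By linearity of $\mathcal B$ and the time-independence of $g$, $\mathcal B w=g-g=0$ on $\partial\Omega$, and $\mathcal B$ has the admissible form $\mathcal A w=\alpha w+\beta\frac{\partial w}{\partial_{\Omega} n}$ with $(\alpha,\beta)=(1,0)$ for Dirichlet and $(\alpha,\beta)=(0,1)$ for Neumann; finally $w(x,0)=u(x,h)-u(x,0)\ge0$ by Step 1. Thus $w$ satisfies the hypotheses of Theorem \ref{max} (with the bounded coefficient $k$), so $w\ge0$ on $\bar\Omega\times[0,T-h]$, i.e.\ $u(x,t+h)\ge u(x,t)$. Since $h>0$ and $t$ with $t+h\lesssim T$ were arbitrary, $u(x,s)\le u(x,t)$ for all $0\le s<t\lesssim T$ and $x\in\bar\Omega$. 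If $\varphi$ is a supersolution of \eqref{t}, the same two steps with reversed inequalities give $u\le\varphi$ and then $w\le0$ (apply Theorem \ref{max} to $-w$), so $u$ is nonincreasing in $t$.

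\emph{Main obstacle.} Once the time-translation observation is made, the proof is largely bookkeeping, and there is no substantial analytic difficulty; the points needing care are (a) that the difference-quotient coefficient $k(x,t)$ is genuinely bounded uniformly in $t$, which is exactly what the global Lipschitz hypothesis \eqref{4.5} supplies, so that Theorem \ref{max} is applicable, and (b) the routine reduction to finite time intervals when $T=\infty$. The essential structural point is that $\varphi$ being an \emph{elliptic} subsolution of \eqref{t} upgrades it, at no cost, to a \emph{parabolic} lower solution of \eqref{IB} (because $\partial_t\varphi=0$), and this is precisely what forces the shifted difference $w(\cdot,0)=u(\cdot,h)-\varphi$ to be nonnegative and thereby starts the maximum-principle argument.
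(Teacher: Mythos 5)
Your proof is correct and follows essentially the same route as the paper: first upgrade the elliptic subsolution $\varphi$ to a parabolic lower solution of \eqref{IB} to get $u\ge\varphi$, then compare $u$ with its time shift $u(\cdot,\cdot+h)$. The only cosmetic difference is that in the second step you apply Theorem \ref{max} directly to the difference $w=u(\cdot,\cdot+h)-u$, whereas the paper invokes Theorem \ref{o} for the shifted problem, which amounts to the same comparison.
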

\begin{proof}
We only deal with the case that $\varphi$ is a subsolution of \eqref{t}, and the case that $\varphi$ is a supersolution of \eqref{t} can be proved by a similar approach. 	It's easy to check that $\varphi$ is a lower solution of \eqref{IB}.  Applying Theorem \ref{o} and the locally Lipschitz continuous of $f$ in $u$, we deduce that $u(x,t)\ge \varphi(x)$ for  $x\in \bar{\Omega}$ and $ 0\le t \lesssim T$. For $0<\epsilon<T$, it's easy to check that $v(x,t)=u(x,\epsilon+t)$ is a solution of the problem
	\begin{equation*}
		\left\{\begin{array}{lll}
			v_{t}-\Delta_{{\Omega}} v=f(x, v), & x \in \Omega, & 0<t\lesssim T-\epsilon, \\
			\mathcal{B}v=g(x), & x \in \partial \Omega, & 0<t\lesssim T-\epsilon, \\
			v(x, 0)=u(x, \epsilon), & x \in \Omega.
		\end{array}\right.
	\end{equation*}
It follows from Theorem \ref{o} that $v(x,t)=u(x,\epsilon+t)\ge u(x,t)$ for $x\in \bar{\Omega}$, $0\le t\lesssim T-\epsilon$.
\end{proof}

Next, we study the long time behavior of solutions of \eqref{IB} which is monotone with respect to $t$.

\begin{Theorem}\label{45}
	Suppose that \eqref{IB} admits a solution $u(x,t)$, which is monotone nondecreasing (nonincreasing) with respect to $t$ for $t>0$ and $x\in \bar{\Omega}$. Then \eqref{t} has at least one solution $S(x)$, and 
	\begin{equation}\label{589}
		\lim\limits_{t\to +\infty} \max\limits_{x\in \bar{\Omega}}|u(x,t)-S(x)|=0.
	\end{equation}
\end{Theorem}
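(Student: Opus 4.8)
The plan is to lean on the fact that $\bar\Omega$ is a \emph{finite} graph, so every ``uniform in $x$'' assertion reduces to a finite maximum. First I would observe that for each fixed vertex $x\in\bar\Omega$ the trajectory $t\mapsto u(x,t)$ is monotone, hence has a limit in $[-\infty,+\infty]$; provided this limit is finite, set $S(x):=\lim_{t\to+\infty}u(x,t)$. Since $|\bar\Omega|<\infty$, the function $S:\bar\Omega\to\mathbb R$ is well defined, and by monotonicity $\sup_{s\ge t}|u(x,s)-S(x)|=|u(x,t)-S(x)|\to 0$, so the convergence $u(\cdot,t)\to S$ is automatically uniform on each half-line $[t,+\infty)$. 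The one ingredient that is not free from the hypotheses exactly as written is the boundedness of $u$ (without it, a monotone nondecreasing solution could run off to $+\infty$ and no equilibrium need exist); I would either add it as a standing assumption or note that it is automatic in the applications of this section, where constants are sub- and supersolutions of \eqref{IB} and pin $u$ into a bounded interval. With boundedness, the limit $S$ exists.

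Next I would show that $S$ solves the elliptic problem \eqref{t}. A solution of \eqref{IB} is $C^{1}$ in $t$, so the identity $u_{t}(x,s)=\Delta_{\Omega}u(x,s)+f(x,u(x,s))$ holds classically on $\Omega$, and integrating it over $[t,t+1]$ yields
\begin{equation*}
	u(x,t+1)-u(x,t)=\int_{t}^{t+1}\bigl(\Delta_{\Omega}u(x,s)+f(x,u(x,s))\bigr)\,ds,\qquad x\in\Omega.
\end{equation*}
As $t\to+\infty$ the left-hand side tends to $S(x)-S(x)=0$. On the right, $\Delta_{\Omega}u(x,s)$ is a finite linear combination of the values $u(y,s)$, $y\in\bar\Omega$, each converging to $S(y)$ uniformly for $s\ge t$, and $f(x,u(x,s))\to f(x,S(x))$ uniformly for $s\ge t$ by the Lipschitz (hence continuity) hypothesis \eqref{4.5} on $f$; thus the integrand converges uniformly on $[t,t+1]$ to $\Delta_{\Omega}S(x)+f(x,S(x))$, forcing the right-hand side to that same limit. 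Hence $-\Delta_{\Omega}S(x)=f(x,S(x))$ for every $x\in\Omega$. For the boundary condition I would pass to the limit in $\mathcal{B}u(z,t)=g(z)$, $z\in\partial\Omega$: if $\mathcal{B}u=u$ this is $u(z,t)=g(z)$, so $S(z)=g(z)$; if $\mathcal{B}u=\frac{\partial u}{\partial_{\Omega}n}$ it is $\sum_{y\in\Omega}(u(z,t)-u(y,t))\frac{\omega_{zy}}{\mu(z)}=g(z)$, and letting $t\to+\infty$ gives $\frac{\partial S}{\partial_{\Omega}n}(z)=g(z)$. Therefore $S$ is a solution of \eqref{t}.

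Finally, \eqref{589} is immediate: for each $x\in\bar\Omega$ the quantity $|u(x,t)-S(x)|$ equals $S(x)-u(x,t)$ in the nondecreasing case (respectively $u(x,t)-S(x)$ in the nonincreasing case) and decreases monotonically to $0$, and $\max_{x\in\bar\Omega}|u(x,t)-S(x)|$ is the maximum of finitely many such terms, hence also tends to $0$. The only genuine difficulty is the point flagged above --- securing an a priori bound on the monotone trajectory $t\mapsto u(x,t)$ so that $S(x)$ is finite; once that is available, the remainder is a routine passage to the limit, made painless by the finiteness of the graph.
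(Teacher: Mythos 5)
Your proposal is correct, and it follows the same overall strategy as the paper (define $S$ as the pointwise monotone limit, pass to the limit in the equation to see that $S$ is an equilibrium, and get uniformity for free from the finiteness of $\bar\Omega$), but the limiting device is different. The paper multiplies the equation by an arbitrary test function $\varphi$ and takes Ces\`aro time averages $\frac1T\int_0^T(\cdot)\,dt$, using $\frac{u(x,T)-u(x,0)}{T}\to0$ to kill the $u_t$ term and the convergence of time averages of $\Delta_\Omega u$ and $f(x,u)$ to identify $-\Delta_\Omega v=f(x,v)$; you instead integrate the equation over the unit interval $[t,t+1]$ and use uniform convergence of the integrand (again a consequence of the graph being finite), which is slightly more elementary, avoids test functions, and has the advantage that you spell out the passage to the limit in the boundary condition $\mathcal{B}u=g$, which the paper only asserts. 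Your flag about boundedness is a fair point rather than a defect of your argument: the theorem as stated does not guarantee that the monotone trajectories stay bounded, and the paper's proof silently assumes this both when it defines $v(x)=\lim_{t\to+\infty}u(x,t)$ and when it claims $\frac{u(x,T)-u(x,0)}{T}\to0$; in the applications (Theorems 5.7, 5.8) the solutions are indeed pinned between constant sub/supersolutions. One small caveat: you invoke the Lipschitz condition \eqref{4.5} for the convergence $f(x,u(x,s))\to f(x,S(x))$, but \eqref{4.5} is not formally a hypothesis of this theorem; continuity of $f(x,\cdot)$ on the range of $u$ is what is needed, and the paper's proof uses exactly the same implicit assumption when it exchanges the limit with the time average of $f(x,u)$, so this is a shared, harmless gap rather than an error peculiar to your argument.
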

\begin{proof}
Note that $u(x,t)$ is monotone with respect to $t$, then we can find $v$ such that 
\begin{equation}\label{s}
		v(x)=\lim\limits_{t\to +\infty} u(x,t)~\text{for}~x\in \bar{\Omega}.
	\end{equation}
For any given function $\varphi(x): \Omega \to \mathbb{R}$ and any $T>0$, we multiply the first equation of \eqref{IB} by $\varphi$ and integrate on $\Omega\times [0,T]$ to obtain
\begin{equation}\label{591}
\frac{1}{T}	\int_{0}^{T}\int_{\Omega} \left(u_{t}\varphi- \varphi \Delta_{{\Omega}} u  \right)d\mu dt= \int_{0}^{T}\int_{\Omega} \frac{f(x,u)\varphi}{T} d\mu dt. 
\end{equation}
In view of \eqref{s} and the finiteness of $\Omega$ , we see that
\begin{equation*}
	\lim\limits_{T\to +\infty}\int_{0}^{T}\int_{\Omega} \frac{u_{t}\varphi}{T} d\mu dt=\int_{\Omega} \lim\limits_{T\to +\infty} \frac{u(x,T)-u(x,0)}{T} \varphi d\mu=0,
\end{equation*}	
\begin{equation*}
	\lim\limits_{T\to +\infty}\int_{0}^{T}\int_{\Omega} \frac{\varphi\Delta_{{\Omega}}u }{T} d\mu dt=\int_{\Omega} \lim\limits_{T\to +\infty} \int_{0}^{T}\frac{\Delta_{{\Omega}} u }{T}dt ~\varphi d\mu
	=\int_{\Omega} \Delta_{{\Omega}} v(x)\varphi d\mu,		
\end{equation*}
and
\begin{equation*}
		\lim\limits_{T\to +\infty}\int_{0}^{T}\int_{\Omega} \frac{f(x,u)\varphi }{T} d\mu dt=\int_{\Omega} \lim\limits_{T\to +\infty} \int_{0}^{T}\frac{f(x,u) }{T}dt ~\varphi d\mu
		=\int_{\Omega} f(x,v) \varphi d\mu.		
\end{equation*}
Thus, letting $T\to +\infty$ in \eqref{591}, we conclude that 
$$
	-\int_{\Omega} \Delta_{{\Omega}} v(x) \varphi(x) d\mu= \int_{\Omega} f(x,v) \varphi d\mu.
$$
It follows that
\begin{equation*}
	\begin{cases}
		-\Delta_{{\Omega}} v(x)= f(x,v),~&x\in \Omega, \\
		\mathcal{B}v=g(x),~&x\in\partial \Omega.
		\end{cases}
\end{equation*}
Therefore, we conclude that
$
	\lim\limits_{t\to +\infty} u(x,t)=v(x)~\text{for}~x\in \bar{\Omega}.
$
As $\bar{\Omega}$ is finite, we deduce that 
$$
	\lim\limits_{t\to +\infty} \max\limits_{x\in \bar{\Omega}}|u(x,t)-v(x)|=0.
$$ Taking $S(x)=v(x)$, then we obtain \eqref{589}.
\end{proof}

\subsubsection{The parabolic equation of the initial value}
In this subsection, we investigate the monotonicity and convergence of solutions of the following problem
\begin{equation}\label{5c}
	\begin{cases}u_{t}-\Delta_{V} u=f(x, u), & x \in V, \quad 0<t\lesssim T, \\ u(x, 0)=\varphi(x), & x \in V,\end{cases}
\end{equation}
where $0<T\le \infty$ and $\varphi:V\to \mathbb{R}$ are given function.

In the following, the monotonicity of solutions to \eqref{5c} is proved when initial data $\varphi$ is a subsolution or supersolution to \eqref{tc}.

\begin{Theorem}
	Assume that $f$ satisfies \eqref{4.51} for any $\sigma,A\in \mathbb{R}$. If the initial data $\varphi$ of \eqref{5c} is a subsolution (a supersolution) of \eqref{tc}, and the problem \eqref{5c} admits a solution $u$ which is defined in $V \times [0,T]$ for some $T\le \infty$. Then $u$ is monotone nondecreasing (nonincreasing) in time $t$, i.e., $u(x,s)\le(\ge) u(x,t)$ for $0\le s<t\lesssim T$ and $x\in V$.
\end{Theorem}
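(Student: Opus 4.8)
The plan is to run the same argument just used for problem \eqref{IB}, with Theorem \ref{o} replaced by its counterpart Theorem \ref{oc} for the initial value problem \eqref{34}. As before it suffices to treat the case in which $\varphi$ is a subsolution of \eqref{tc}; the supersolution case is obtained by reversing all inequalities. The only way $\varphi$ enters is through the stationary differential inequality $-\Delta_V\varphi\le f(x,\varphi)$ that its being a subsolution provides.

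First I would observe that the time-independent function $(x,t)\mapsto\varphi(x)$ is a lower solution of \eqref{5c}: since $\partial_t\varphi\equiv 0$ we get $\varphi_t-\Delta_V\varphi=-\Delta_V\varphi\le f(x,\varphi)$ on $V\times(0,T]$, while trivially $\varphi(x,0)=\varphi(x)$, so the definition of a subsolution of \eqref{5c} is met. Because $V$ is finite and $u\in C^1(V\times[0,T])$, on every slab $V\times[0,T']$ with $T'\lesssim T$ the numbers $\sigma=\inf\min\{u,\varphi\}$ and $A=\sup\max\{u,\varphi\}$ are finite, so the hypothesis \eqref{4.51} supplies a Lipschitz constant on $[\sigma,A]$. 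Applying Theorem \ref{oc} to the pair $\overline u=u$, $\underline u=\varphi$ then yields
\begin{equation*}
	u(x,t)\ge\varphi(x)\qquad\text{for all }x\in V,\ 0\le t\lesssim T.
\end{equation*}

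To promote this to monotonicity in $t$, I would fix $0<\epsilon<T$ and set $v(x,t):=u(x,t+\epsilon)$. A direct substitution shows that $v$ solves \eqref{5c} on $V\times(0,T-\epsilon]$ with initial datum $v(x,0)=u(x,\epsilon)$, and the previous step gives $u(x,\epsilon)\ge\varphi(x)$. Hence $v$ is an upper solution of the original problem \eqref{5c} --- it satisfies the equation exactly and its initial datum dominates $\varphi$ --- while $u$ is a solution, hence a lower solution. Theorem \ref{oc} again gives $v(x,t)=u(x,t+\epsilon)\ge u(x,t)$ for $x\in V$ and $0\le t\lesssim T-\epsilon$. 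As $\epsilon\in(0,T)$ is arbitrary, this is exactly $u(x,s)\le u(x,t)$ whenever $0<s<t\lesssim T$; together with the displayed inequality (the case $s=0$) this proves the claim.

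There is no serious obstacle: once Theorem \ref{oc} is in hand the proof is pure bookkeeping. The points that do require care are verifying that the constant-in-time extension of $\varphi$ genuinely qualifies as a subsolution of \eqref{5c} (only the inequality $-\Delta_V\varphi\le f(x,\varphi)$, the stationary analogue of \eqref{5c}, is needed), and, when $T=\infty$, ensuring the Lipschitz bound is used on bounded time intervals so that $\sigma$ and $A$ remain finite; both follow from the finiteness of $V$ and the continuity of $u$.
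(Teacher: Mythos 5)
Your proposal is correct and follows essentially the same route as the paper: show that the time-independent extension of $\varphi$ is a lower solution of \eqref{5c}, apply Theorem \ref{oc} to get $u\ge\varphi$, then compare $u$ with its time shift $v(x,t)=u(x,t+\epsilon)$ via Theorem \ref{oc} again. Your extra remarks (restricting to bounded time slabs when $T=\infty$ so the Lipschitz bound applies) only make explicit what the paper leaves implicit.
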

\begin{proof}
We only deal with the case that $\varphi$ is a subsolution of \eqref{tc}. For the case of supersolution $\varphi$ to \eqref{tc}, we can use analogy methods to achieve the conclusion.	It is easy to check that $\varphi$ is a lower solution of \eqref{5c}. We use Theorem \ref{oc} and  the locally Lipschitz continuous of $f$ in $u$ to obtain that $u(x,t)\ge \varphi(x)$ for $x\in V $ and $0\le t \lesssim T$. Let $0<\epsilon<T$. It is easy to check that $v(x,t)=u(x,\epsilon+t)$ is a solution of the problem
\begin{equation*}
		\left\{\begin{array}{lll}
			v_{t}-\Delta_{V} v=f(x, v), & x \in V, & 0<t\lesssim T-\epsilon, \\
			v(x, 0)=u(x, \epsilon), & x \in V .
		\end{array}\right.
\end{equation*}
	Thus, by Theorem \ref{oc}, we conclude that $v(x,t)=u(x,\epsilon+t)\ge u(x,t)$ for $x\in V$, $0< t\lesssim T-\epsilon$.
\end{proof}

Now, we study the long time behavior of solutions of \eqref{5c} which is monotone with respect to $t$.

\begin{Theorem}\label{45c}
	Suppose that \eqref{5c} admits a solution $u(x,t)$, which is monotone nondecreasing (nonincreasing) with respect to time $t$ for $t>0$ and $x\in V$. Then {\eqref{tc}} has at least one solution $e(x)$, and 
	\begin{equation}\label{101}
		\lim\limits_{t\to +\infty} \max\limits_{x\in V}|u(x,t)-e(x)|=0.
	\end{equation}
\end{Theorem}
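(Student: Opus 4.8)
The plan is to reproduce, with the obvious changes, the argument given for Theorem \ref{45}: work on $V$ instead of $\bar\Omega$, use $\Delta_V$ in place of $\Delta_\Omega$, and drop all boundary terms. We treat the case where $u$ is nondecreasing in $t$; the nonincreasing case is handled by an identical argument.

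First I would pass to the limit in time, pointwise in $x$. Since $t \mapsto u(x,t)$ is monotone nondecreasing for each fixed $x \in V$ and $V$ is finite, the limit $e(x) := \lim_{t\to+\infty} u(x,t)$ exists for every $x \in V$, which defines the candidate function $e : V \to \mathbb{R}$. (Here one uses that the monotone global solution stays bounded, so that this limit is finite.)

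Next, fix an arbitrary function $\varphi : V \to \mathbb{R}$ and $T > 0$. Multiplying the first equation of \eqref{5c} by $\varphi(x)$, integrating over $V$ against $\mu$, then over $t \in [0,T]$, and dividing by $T$ yields
\begin{equation*}
\frac{1}{T}\int_0^T\!\!\int_V \bigl( u_t\varphi - \varphi\,\Delta_V u \bigr)\, d\mu\, dt = \frac{1}{T}\int_0^T\!\!\int_V f(x,u)\,\varphi\, d\mu\, dt .
\end{equation*}
Now I let $T \to +\infty$ term by term, exactly as in the proof of Theorem \ref{45}. The first term telescopes, $\frac{1}{T}\int_0^T\!\int_V u_t\varphi\, d\mu\, dt = \int_V \frac{u(x,T)-u(x,0)}{T}\,\varphi\, d\mu \to 0$, since $u(x,T)\to e(x)$ stays bounded. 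For the remaining two terms: because $u(x,t)\to e(x)$ for each $x$, because $\Delta_V$ is a fixed finite linear combination of point values, and because $f(x,\cdot)$ is continuous, we have $\Delta_V u(x,t)\to\Delta_V e(x)$ and $f(x,u(x,t))\to f(x,e(x))$ as $t\to+\infty$; the time-averages of these convergent integrands converge to the same limits, so $\frac{1}{T}\int_0^T\!\int_V \varphi\,\Delta_V u\, d\mu\, dt \to \int_V \varphi\,\Delta_V e\, d\mu$ and $\frac{1}{T}\int_0^T\!\int_V f(x,u)\varphi\, d\mu\, dt \to \int_V f(x,e)\varphi\, d\mu$. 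Hence $-\int_V \Delta_V e(x)\,\varphi(x)\, d\mu = \int_V f(x,e(x))\,\varphi(x)\, d\mu$ for every $\varphi$, which forces $-\Delta_V e(x) = f(x,e(x))$ for all $x\in V$; thus $e$ is a solution of \eqref{tc}. Finally, since $V$ is finite, the pointwise convergence $u(x,t)\to e(x)$ immediately gives $\lim_{t\to+\infty}\max_{x\in V}|u(x,t)-e(x)| = 0$, which is \eqref{101}.

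The one genuinely delicate point is the finiteness of the monotone limit $e(x)$: one must know that the given globally defined monotone solution does not grow without bound as $t\to+\infty$, for otherwise the telescoping term need not vanish and $e$ need not even be well defined. Granting that (either as part of the hypotheses or from an a priori bound, e.g.\ via comparison with a constant upper solution and Theorem \ref{oc}), the remainder is the same finite-graph bookkeeping already carried out for Theorem \ref{45}.
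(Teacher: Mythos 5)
Your proposal is correct and follows essentially the same route as the paper's own proof: pass to the pointwise limit $e(x)=\lim_{t\to+\infty}u(x,t)$, test the equation against an arbitrary $\varphi$, average in time over $[0,T]$, let $T\to+\infty$ term by term to get $-\Delta_V e=f(x,e)$, and use finiteness of $V$ to upgrade pointwise to uniform convergence. Your closing remark about needing boundedness of the monotone solution is a fair observation (the paper tacitly assumes it when asserting the limit exists), but it does not change the argument.
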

\begin{proof}
As $u(x,t)$ is monotone with respect to $t$, we have 
\begin{equation}\label{sc}
		v(x):=\lim\limits_{t\to +\infty} u(x,t)~\text{for}~x\in V.
\end{equation}
	For any given function $\varphi(x): V \to \mathbb{R}$ and any $T>0$, we multiply the first equation of \eqref{5c} by $\varphi$ and integrate on $V\times[0,T]$, then 
	\begin{equation}\label{5}
		\frac{1}{T}	\int_{0}^{T}\int_{V}\left( u_{t}\varphi- \varphi \Delta_{{V}} u\right)  d\mu dt= \int_{0}^{T}\int_{V} \frac{f(x,u)\varphi}{T} d\mu dt. 
	\end{equation}
Thanks to \eqref{sc} and the finiteness of $V$, we have  
	\begin{equation*}\label{11c}
		\lim\limits_{T\to +\infty}\int_{0}^{T}\int_{V} \frac{u_{t}\varphi}{T} d\mu dt=\int_{V} \lim\limits_{T\to +\infty} \frac{u(x,T)-u(x,0)}{T} \varphi d\mu=0,
	\end{equation*}
	\begin{equation*}
			\lim\limits_{T\to +\infty}\int_{0}^{T}\int_{V} \frac{\varphi\Delta_{{V}}u }{T} d\mu dt=\int_{V} \lim\limits_{T\to +\infty} \int_{0}^{T}\frac{\Delta_{{V}} u }{T}dt ~\varphi d\mu
			=\int_{V} \Delta_{{V}} v(x)\varphi d\mu,		
	\end{equation*}
	and
	\begin{equation*}
			\lim\limits_{T\to +\infty}\int_{0}^{T}\int_{V} \frac{f(x,u)\varphi }{T} d\mu dt=\int_{V} \lim\limits_{T\to +\infty} \int_{0}^{T}\frac{f(x,u) }{T}dt ~\varphi d\mu
			=\int_{V} f(x,v) \varphi d\mu.		
	\end{equation*}
Letting $T\to +\infty$ in \eqref{5}, we conclude that 
$$
		-\int_{V} \Delta_{{V}} v(x) \varphi(x) d\mu= \int_{V} f(x,v) \varphi d\mu.
$$
It follows that
$
			-\Delta_{{V}} v(x)= f(x,v),~x\in V
$ 
which implies that $\lim\limits_{t\to +\infty}u(x,t)=v(x)$. Then 
$$
	\lim\limits_{t\to +\infty} \max\limits_{x\in V}|u(x,t)-v(x)|=0
$$ 
by the finiteness of $V$.
If we take $e=v$, then \eqref{101} holds.
\end{proof}

\subsection{Applications}
In this subsection, we give some examples as applications of the upper and lower solutions method. 
\subsubsection{The parabolic problem with intial-boundary value} 
In this subsubsection, we consider the discrete Logistic model \eqref{1p1}.

 We first consider the initial boundary value problem
\begin{equation}\label{1p}
	\begin{cases}
		u_{t}-\Delta_{{\Omega}} u= u(a-b u),~&x\in \Omega,~t>0, \\
		u=0,~&x\in \partial{\Omega},~t>0,\\
		u(x,0)=u_0(x),~&x\in \Omega.
	\end{cases}
\end{equation}
Its equilibrium problem is 
\begin{equation}\label{2p}
	\begin{cases}
		-\Delta_{{\Omega}} u= u(a-bu),~x\in \Omega, \\
		u(x)=0,~x\in \partial\Omega. 
	\end{cases}	
\end{equation}

\begin{Theorem}\label{51}
Let $\Omega$ be a subgraph of a host graph $G$.	Suppose $u_0$ $\in R\bar{\Omega}$ satisfying $u_0 \ge, \not \equiv 0$ on $\Omega$ and $a,b>0$. Then \eqref{1p} admits a unique nonnegative global solution $u(x,t)$ and \eqref{2p} admits a unique positive solution $u_{s}(x)$. Furthermore, if $a>\lambda_{1}$, then  
\begin{equation}\label{20220819}
	\lim\limits_{t\to +\infty} u(x,t) = u_{s}(x)~\text{uniformly~for}~x\in \bar{\Omega};
\end{equation}
if $a\le \lambda_{1}$, then 
$
	\lim\limits_{t\to +\infty} u(x,t)=0
$
uniformly for $x\in \Omega$, where $\lambda_{1}$ is the least eigenvalue of the eigenvalue problem \eqref{D}.
\end{Theorem}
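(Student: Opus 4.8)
The plan is to run everything off the comparison/ordering theorems, the two sub–supersolution existence results, and the convergence result Theorem \ref{45}, handling the dichotomy $a>\lambda_1$ versus $a\le\lambda_1$ by comparing $u$ to explicit barriers built from the first Dirichlet eigenfunction $\phi_1$. \textbf{Well-posedness.} First I would note that the constant $K_0:=\max\{\max_{\Omega}u_0,\ a/b\}$ is an upper solution of \eqref{1p} on every interval $[0,T]$ (indeed $(K_0)_t-\Delta_{\Omega}K_0=0\ge K_0(a-bK_0)$ since $K_0\ge a/b$, while $K_0\ge 0=g$ on $\partial\Omega$ and $K_0\ge u_0$), and $0$ is a lower solution; since $f(u)=u(a-bu)$ is Lipschitz on $[0,K_0]$, Theorem \ref{sxj1} gives for each $T$ a unique solution with $0\le u\le K_0$, and by uniqueness these glue into the unique nonnegative global solution $u$. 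Because $u_0\ge,\not\equiv0$, Theorem \ref{sm} gives $u(x,t)>0$ for $x\in\Omega$, $t>0$.

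\textbf{The steady state.} If $w$ is a positive solution of \eqref{2p}, evaluating the equation at an interior maximum gives $0<w\le a/b$, and testing against $w$ together with the Rayleigh characterization $\tfrac12\sum_{x,y\in\bar\Omega}\omega_{xy}(w(x)-w(y))^2\ge\lambda_1\sum_{\Omega}w^2\mu$ forces $\lambda_1<a$; so no positive steady state exists when $a\le\lambda_1$. When $a>\lambda_1$, $\varepsilon\phi_1$ is a lower solution of \eqref{2p} for $\varepsilon$ small (as $-\Delta_{\Omega}(\varepsilon\phi_1)=\varepsilon\lambda_1\phi_1\le\varepsilon\phi_1(a-b\varepsilon\phi_1)$ once $b\varepsilon\max_{\Omega}\phi_1\le a-\lambda_1$) and $a/b$ an upper solution with $\varepsilon\phi_1\le a/b$, so Theorem \ref{44} yields a solution $u_s$ with $\varepsilon\phi_1\le u_s\le a/b$; in particular $u_s>0$ on $\Omega$ and $u_s=0$ on $\partial\Omega$. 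For uniqueness among positive solutions I would use a discrete sliding argument: given positive solutions $u_1,u_2$, put $\tau:=\min_{x\in\Omega}u_1(x)/u_2(x)$, attained at some $x_0\in\Omega$; if $\tau<1$ then $v:=u_1-\tau u_2\ge0$ on $\bar\Omega$ with $v(x_0)=0$, hence $-\Delta_{\Omega}v(x_0)=-\sum_{y\in\bar\Omega}v(y)\tfrac{\omega_{x_0 y}}{\mu(x_0)}\le0$, whereas the equations give $-\Delta_{\Omega}v(x_0)=b\,u_1(x_0)^2\bigl(\tfrac1\tau-1\bigr)>0$, a contradiction; thus $u_1\ge u_2$, and by symmetry $u_1=u_2$.

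\textbf{Convergence when $a>\lambda_1$.} Let $\underline v,\bar v$ be the solutions of \eqref{1p} with initial data $\varepsilon\phi_1$ and $K_0$. Since $\varepsilon\phi_1$ is a subsolution and $K_0$ a supersolution of \eqref{2p}, the monotonicity theorem preceding Theorem \ref{45} makes $\underline v$ nondecreasing and $\bar v$ nonincreasing in $t$, and Theorem \ref{45} makes each converge uniformly to a solution of \eqref{2p}; as $\underline v\ge\varepsilon\phi_1>0$ and, by Theorem \ref{o}, $\bar v\ge\underline v$, both limits are positive and hence equal $u_s$. For the given $u_0$, fix $t_1>0$: by the first step $u(\cdot,t_1)>0$ on the finite set $\Omega$, so after shrinking $\varepsilon$ we have $\varepsilon\phi_1\le u(\cdot,t_1)\le K_0$, and comparison (Theorem \ref{o}) applied to $u(\cdot,t_1+\cdot)$ gives $\underline v(x,t)\le u(x,t_1+t)\le\bar v(x,t)$ for all $t\ge0$; letting $t\to\infty$ squeezes $u(x,t)\to u_s(x)$, uniformly because $\bar\Omega$ is finite.

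\textbf{Extinction when $a\le\lambda_1$.} Here I would construct a decaying supersolution $w(x,t):=\eta(t)\phi_1(x)$, where $\phi_{\min}:=\min_{\Omega}\phi_1>0$ and $\eta$ solves $\eta'=(a-\lambda_1)\eta-\tfrac{b\phi_{\min}}{2}\eta^2$ with $\eta(0)=C\ge\max_{\Omega}u_0/\phi_{\min}$. Then $w_t-\Delta_{\Omega}w-f(x,w)=\phi_1\bigl(\eta'+(\lambda_1-a)\eta+b\eta^2\phi_1\bigr)=b\eta^2\phi_1\bigl(\phi_1-\tfrac{\phi_{\min}}{2}\bigr)\ge0$, while $w=0\ge g$ on $\partial\Omega$ and $w(\cdot,0)=C\phi_1\ge u_0$, so $w$ is a supersolution of \eqref{1p} and Theorem \ref{o} gives $0\le u(x,t)\le\eta(t)\phi_1(x)$; since $a-\lambda_1\le0$, $\eta$ is positive, strictly decreasing, and $\eta(t)\to0$ (exponentially if $a<\lambda_1$, like $1/t$ if $a=\lambda_1$), whence $u(x,t)\to0$ uniformly. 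The routine parts are the well-posedness and the squeezing step; the two points needing care are the uniqueness of the positive steady state — where the strict monotonicity of $u\mapsto f(u)/u=a-bu$ is essential and is exploited through the sliding computation — and the critical threshold $a=\lambda_1$, where the bare exponential barrier $e^{(a-\lambda_1)t}\phi_1$ no longer decays and one must add the logistic damping term $-\tfrac{b\phi_{\min}}{2}\eta^2$, the main computation being to check that $\eta(t)\phi_1(x)$ stays a supersolution after this modification.
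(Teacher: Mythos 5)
Your proposal is correct, and on well-posedness and the supercritical case it runs essentially parallel to the paper: constant and zero barriers plus Theorem \ref{sxj1} give the unique bounded global solution, and for $a>\lambda_1$ you squeeze $u(\cdot,t_1+\cdot)$ between the monotone-in-time solutions started from $\varepsilon\phi_1$ and a constant supersolution, then invoke the monotonicity theorem, Theorem \ref{45}, and uniqueness of the positive steady state (the paper uses $k\hat u$ rather than $K_0$ as the upper initial datum, which is immaterial). You genuinely diverge in two places. For uniqueness of the positive solution of \eqref{2p}, the paper first bounds any positive solution by $a/b$, compares it with the maximal solution produced by Theorem \ref{44}, and removes the gap through an integration-by-parts identity exploiting the monotonicity of $a-bu$; your sliding argument at the minimum point of $u_1/u_2$ is a correct, self-contained alternative that never needs the maximal solution, and the key computation $-\Delta_{\Omega}(u_1-\tau u_2)(x_0)=b\,u_1(x_0)^2(\tfrac1\tau-1)>0$ checks out. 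For extinction when $a\le\lambda_1$, the paper works with the scalar quantity $\int_{\Omega}u\phi_1\,d\mu$, derives a Riccati-type inequality $f'\le-\tfrac{b}{C_0}f^2$, and finishes with a subsequence/compactness step; your explicit barrier $w=\eta(t)\phi_1$ with $\eta'=(a-\lambda_1)\eta-\tfrac{b\phi_{\min}}{2}\eta^2$ is also correct (indeed $w_t-\Delta_{\Omega}w-w(a-bw)=b\eta^2\phi_1\bigl(\phi_1-\tfrac{\phi_{\min}}{2}\bigr)\ge0$ on $\Omega$), avoids the compactness step, and yields explicit decay rates, including the $O(1/t)$ rate at the critical value $a=\lambda_1$ that the pure exponential barrier misses. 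Two small remarks: your Rayleigh-quotient nonexistence argument for $a\le\lambda_1$ uses the variational characterization of $\lambda_1$, which the paper never states but which follows from Proposition \ref{20220803-1} by expanding in the Dirichlet eigenbasis; and that nonexistence observation (absent from the paper) shows the theorem's unconditional claim that \eqref{2p} has a positive solution must be read as holding only when $a>\lambda_1$ --- your treatment is consistent with that reading, as is the paper's own proof, whose subsolution $\delta\phi_1$ likewise requires $a>\lambda_1$.
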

\begin{proof}
  It is easy to see that $M:=\max\{\frac{a}{b}, \max\limits_{{\Omega}} u_0 \}$ is a supersolution of \eqref{1p} and $0$ is a subsolution of \eqref{1p}. By Theorem \ref{sxj1}, problem \eqref{1p} admits a unique positive solution $u(x,t)$ satisfying $0\le u(x,t) \le M$ for $x\in \bar{\Omega}$ and $t\ge 0$.
 
Clearly, 
  $\frac{a}{b}$ is a supersolution to \eqref{2p}, and there exists a sufficiently small $\delta^{'}>0$ such that for any $\delta<\delta^{'}$,  $\delta \phi_1$ is a subsolution of \eqref{2p}, where $\phi_1$ is an eigenfunction corresponding of $\lambda_{1}$. We applly Theorem \ref{44} to conclude that \eqref{2p} has a maximal solution $\tilde{u}(x):$ $\delta\phi_1 \le \tilde{u}\le \frac{a}{b}$. 
  
Now, we remain to show that $\tilde{u}$ is the unique positive solution to problem \eqref{2p}. Suppose $v$ is a positive solution of \eqref{2p} satisfying $v>0$ on $\Omega$. It is easy to check that 
 \begin{equation*}
 	\begin{cases}
 		-\Delta_{{\Omega}}(\frac{a}{b}-v)+bv(\frac{a}{b}-v)=0,~x\in \Omega	\\
 		\frac{a}{b}-v(x)\ge 0,~x\in\partial{\Omega}.
 	\end{cases}
 \end{equation*}
 Applying Theorem \ref{25}, we obtain that $\frac{a}{b}\ge v(x)$ on $\bar{\Omega}$. It is easily seen that $\frac{a}{b}$ and $0$ are upper and lower solutions to \eqref{2p} respectively. By Theorem \ref{44}, there exists a maximal solution $\hat{u}$ of \eqref{2p} with $0\le\hat{u}\le \frac{a}{b}$. Thus we see that $v\le\ \hat{u}$ on $\bar{\Omega}$.
 Then integration by parts gives
	\begin{equation*}
		\begin{aligned}
			\int\limits_{\Omega} v\hat{u} (a-b\hat{u}) d\mu&= \int\limits_{\Omega} v(-\Delta_{\Omega} \hat{u}) d\mu 
			=\int\limits_{\bar \Omega} v(-\Delta_{{\Omega}} \hat{u}) d\mu \\
			&= \int\limits_{\bar \Omega} -\hat{u}\Delta_{{\Omega}} v  d\mu 
		=\int_{\Omega} -\hat{u} \Delta_{{\Omega}} v d\mu	=\int\limits_{{\Omega} } \hat{u} v (a-bv) d\mu. 
		\end{aligned}
	\end{equation*}
This implies that
$
	0=\int_{\Omega} v\hat{u} b(\hat{u}-v) d\mu.
$ Due to $\hat{u}\ge v$ in $\Omega$, we conclude that $v\equiv \hat{u}$ on $\Omega$. In other word, the positive solution to \eqref{2p} is unique. 

  Suppose that $a>\lambda_{1}$. We shall prove that \eqref{20220819} holds.
Using Theorem \ref{sm}, we see that $u(x,t)>0$ for $x\in \Omega$, $t>0$. For fix $T>0$, we can find a sufficiently small $0<\epsilon <\delta'$ and a sufficiently large $k>0$ such that 
$
	\epsilon\phi_{1}(x) \le u(x,T) \le k\hat{u} (x),~x\in \bar{\Omega}.
$ 
It is easy to check that $\epsilon\phi_1$ ($k\hat{u}$) is a subsolution (supersolution) to  the following problem
\begin{equation}\label{58}
	\begin{cases}
	v_{t}- \Delta_{{\Omega}} v=v(a-bv)~&\text{in}~\Omega\times (0,T],\\
	v=0~&\text{on}~\partial{\Omega}\times [0,T],\\
	v(x,0)=u(x,T)~&\text{on}~\bar{\Omega}.
	\end{cases}
\end{equation}
 Then it follows from Theorem \ref{sxj1} that the problem \eqref{58} admits a unique solution $v(x,t)=u(x,t+T)$ satisfying $0\le v(x,t)\le M$ for $t\ge 0$, $x\in\bar{\Omega}$. Set $u_1$ and $u_2$ be solutions to \eqref{58} with $v(x,0)=\epsilon\phi_1$ and $v(x,0)=k\hat{u}$, respectively. By Theorem \ref{o}, we deduce that 
$$
	u_{1}(x,t)\le v(x,t) \le u_{2}(x,t)~\text{for}~x\in\bar{\Omega}~\text{and}~t\ge 0.
$$
 Combining Theorem \ref{45} and the fact that {$\hat{u}$} is the unique positive solution to {\eqref{2p}}, we deduce that 
$
	u_{1}(x,t)\to \hat{u}(x)~\text{and}~u_{2}(x,t)\to \hat{u}(x) 
$
uniformly for $x\in \bar{\Omega}$ as $t\to +\infty$. It's easily seen that  $
	u(x,t)=v(x,t-T)\to \hat{u}(x)
$ uniformly for $x\in\bar{\Omega}$ as $t\to +\infty$. Taking $u_s=\hat{u}$, then \eqref{20220819} holds.

Next, we deal with the case that $a\le \lambda_{1}$.
By H$\ddot{\text{o}}$lder inequality, we obtain 
	\begin{equation*}
			\left( \int\limits_{\Omega} u\phi_{1} d\mu\right) ^2 =\left( \int\limits_{\Omega} u \sqrt{{\phi}_{1}}\sqrt{{\phi}_{1}} d\mu\right) ^2 
			\le \int\limits_{\Omega} u^{2} {\phi}_{1} d\mu \int_{\Omega} {\phi}_{1} d\mu 
			\le C_0 \int\limits_{\Omega} u^{2} {\phi}_{1} d\mu,
	\end{equation*}
where $C_0=\text{Vol}(\Omega)\max_{\Omega}\phi_{1} $
Due to $u|_{\partial\Omega}={\phi}_{1}|_{\partial\Omega}=0$, by Proposition \ref{20220803-1} (4), integration by parts gives
\begin{equation*}
	-\int_{\Omega} {\phi}_{1} \Delta_{{\Omega}} u d\mu=-\int_{\bar{\Omega}} {\phi}_{1}\Delta_{{\Omega}} u d\mu=-\int_{\bar{\Omega}} u\Delta_{\Omega} {\phi}_{1} d\mu=-\int_{\Omega}u\Delta_{\Omega} {\phi}_{1} d\mu.
\end{equation*} 
Thus, we see that
	\begin{equation*}
		\begin{aligned}
	\int\limits_{\Omega}	\left(u_{t} {\phi}_{1}+  \lambda_{1} u{\phi}_{1} \right) d\mu=\int_{\Omega} \left(u_{t}{\phi}_{1}-u\Delta_{{\Omega}} {\phi}_{1} \right)d\mu &=	\int\limits_{\Omega}\left(	u_{t} {\phi}_{1}- {\phi}_{1}\Delta_{{\Omega}} u \right) d\mu = \int\limits_{\Omega} \left(au{\phi}_{1}-bu^{2} {\phi}_{1} \right)d\mu\\
			&\le \int\limits_{\Omega} au{\phi}_{1} d\mu -\frac{b}{C_0}(\int\limits_{\Omega} u{\phi}_{1} d\mu)^2.
		\end{aligned}		
	\end{equation*}

Set $f(t)=\int_{\Omega} u(x,t) {\phi}_{1}(x) d\mu$. It's easily seen that
$
	f^{\prime} (t) \le -\frac{b}{C_0} f^{2}(t)~\text{and}~f(t)>0
$
for $t\ge 0$. It follows that 
$$
	f(t) \le \frac{1}{\frac{bt}{C_0} +\frac{1}{f(0)}}.
$$
This implies that $\lim\limits_{t\to +\infty} f(t)=0$, i.e., 
$\int_{\Omega}u(x,t) {\phi}_{1}(x) d\mu\to 0$ as $t\to +\infty$. Since $0\le u \le M$ on $\overline{\Omega} \times [0,+\infty)$, we can find $t_i \to +\infty$ such that
 $$u(x,t_i)\to \tilde{u}(x) \;\text{uniformly for} \;x\in \overline{\Omega}\;\text{ as}\; i \to +\infty.$$ 
 Thus, we have 
$
	\int_{\Omega} \tilde{u}(x){\phi}_{1}(x) d\mu=0.
$
Recalling that $\tilde{u}(x)\ge 0$ for $x\in\overline{\Omega}$, then we have $\tilde{u}(x)\equiv 0$ on $\overline{\Omega}$. Therefore, we conclude that $u(x,t)\to 0$ uniformly for $x\in \overline{\Omega}$ as $t\to +\infty$. 
\end{proof}

Next we consider the example 
\begin{equation}\label{1n}
	\begin{cases}
		u_{t}-\Delta_{{\Omega}} u= u(a-b u),~&x\in \Omega,~t>0, \\
		\frac{\partial u}{\partial_{\Omega}n}(x,t)=0,~&x\in \partial{\Omega},~t>0,\\
		u(x,0)=u_0(x),~&x\in \Omega.
	\end{cases}
\end{equation}
\begin{Theorem}
Let $\Omega$ be a subgraph of a host graph $G$,	$a>0$ and $b>0$ be constants and $u_0\ge ,\not\equiv 0$ belong to $R\bar{\Omega}$. Then the problem \eqref{1n} has a unique nonnegative global solution $u$, and 
	$
	\lim\limits_{t\to +\infty} u(x,t)=\frac{a}{b}
	$ uniformly for $x\in\bar{\Omega}$. 
\end{Theorem}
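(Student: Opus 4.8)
The plan is to imitate the proof of Theorem~\ref{51}, keeping the Neumann boundary condition throughout and letting the constant $a/b$ play the role that the nontrivial Dirichlet equilibrium $u_s$ played there. First I would obtain global solvability, uniqueness and a priori bounds: set $M:=\max\{a/b,\ \max_{\Omega}u_0\}$; the constant $M$ is a supersolution of \eqref{1n} (since $M_t-\Delta_{\Omega}M=0\ge M(a-bM)$ because $M\ge a/b$, while $\partial M/\partial_{\Omega}n=0$ and $M\ge u_0$), and $0$ is a subsolution. As $f(u)=u(a-bu)$ is Lipschitz on $[0,M]$, Theorem~\ref{sxj1} gives on each $[0,T]$ a unique solution with $0\le u\le M$, and these patch together to a global one; any nonnegative solution $\tilde u$ has $\tilde u(\cdot,0)=u_0\le M$, hence $\tilde u\le M$ by Theorem~\ref{o}, whence uniqueness. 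Rewriting the equation as $u_t-\Delta_{\Omega}u-(a-bu)u=0$ with bounded coefficient and using $u_0\ge,\not\equiv0$, the Neumann strong maximum principle (Theorem~\ref{sm}) gives $u(x,t)>0$ on $\bar{\Omega}\times(0,\infty)$.

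Next I would prove that the only positive solution of the stationary problem $-\Delta_{\Omega}v=v(a-bv)$ in $\Omega$, $\partial v/\partial_{\Omega}n=0$ on $\partial\Omega$, is $v\equiv a/b$. Given a positive solution $v$ (note $v>0$ also on $\partial\Omega$, since the Neumann relation writes $v(z)$ as a convex combination of the positive values $v(y)$, $y\in\Omega$, $y\sim z$), put $w:=\frac ab-v$; using $\Delta_{\Omega}w=-\Delta_{\Omega}v=v(a-bv)$ and $bv-a=-bw$ one obtains
\begin{equation*}
-\Delta_{\Omega}w+bv\,w=0\ \text{ in }\Omega,\qquad \frac{\partial w}{\partial_{\Omega}n}=-\frac{\partial v}{\partial_{\Omega}n}=0\ \text{ on }\partial\Omega.
\end{equation*}
Since $bv>0$ on $\bar{\Omega}$, Theorem~\ref{25} (zero drift vector) forces $w\ge0$, i.e.\ $v\le a/b$; repeating the computation with $w:=v-\frac ab$ gives $v\ge a/b$, so $v\equiv a/b$.

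Then I would run the squeezing argument. Fix $T>0$; by positivity, $u(\cdot,T)>0$ on the finite set $\bar{\Omega}$, so choose $\varepsilon$ with $0<\varepsilon\le\min\{\min_{\bar{\Omega}}u(\cdot,T),\,a/b\}$. Because $\varepsilon\le a/b\le M$, the constants $\varepsilon$ and $M$ are respectively a sub- and a supersolution of the stationary Neumann problem, so by the monotonicity theorem for \eqref{IB} the solutions $u_1,u_2$ of \eqref{1n} with initial data $\varepsilon$ and $M$ (both global, with $\varepsilon\le u_1$ and $u_2\le M$ by Theorem~\ref{o}) are monotone nondecreasing, resp.\ nonincreasing, in $t$; comparing each with the stationary solution $a/b$ via Theorem~\ref{o} gives $\varepsilon\le u_1\le a/b\le u_2\le M$. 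Theorem~\ref{45} then yields uniform limits $u_1\to S_1$, $u_2\to S_2$ with $S_1,S_2$ solving the stationary Neumann problem and positive (as $S_1\ge\varepsilon$, $S_2\ge a/b$), so $S_1=S_2=a/b$ by the previous paragraph. Finally $u(\cdot,\cdot+T)$ solves \eqref{1n} with data $u(\cdot,T)\in[\varepsilon,M]$, so Theorem~\ref{o} (with $\varepsilon$ a lower and $M$ an upper solution of that problem) squeezes $u_1(x,t)\le u(x,t+T)\le u_2(x,t)$ on $\bar{\Omega}\times[0,\infty)$; letting $t\to\infty$ gives $\max_{\bar{\Omega}}|u(\cdot,t)-a/b|\to0$.

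The main obstacle is the uniqueness of the positive equilibrium under the Neumann condition, which has no direct counterpart in Theorem~\ref{51} (there a delicate integration-by-parts identity between two solutions was used). Here the substitution $w=\frac ab-v$ linearizes the stationary equation and reduces everything to the elliptic maximum principle Theorem~\ref{25}; the only point requiring care is verifying $v>0$ on $\partial\Omega$ as well, so that the zeroth-order coefficient $bv$ is strictly positive on all of $\bar{\Omega}$. The remaining steps are a routine transcription of the Dirichlet argument.
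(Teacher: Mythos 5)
Your proposal is correct, but it follows a genuinely different route from the paper. The paper's own proof is much lighter: after the same first step (constants $M=\max\{a/b,\max_\Omega u_0\}$ and $0$ as ordered upper/lower solutions, Theorem \ref{sxj1} for global existence and uniqueness, Theorem \ref{sm} for strict positivity), it exploits the fact that spatially constant functions are exact solutions of the graph problem (the graph Laplacian annihilates constants): with $m=\min_{\bar\Omega}u(\cdot,\delta)>0$ and $M=\max_{\bar\Omega}u(\cdot,\delta)$, Theorem \ref{o} sandwiches $u(x,t+\delta)$ between the logistic ODE solutions $Z(t;m)$ and $Z(t;M)$, both of which tend to $a/b$, and the convergence follows with no stationary analysis at all. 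You instead prove uniqueness of the positive Neumann equilibrium — via the substitution $w=\tfrac ab-v$, which linearizes the steady-state equation and lets Theorem \ref{25} (with coefficient $bv>0$, whose positivity on $\partial\Omega$ you rightly check through the Neumann relation) do the work — and then invoke the monotone-dynamics machinery (the monotonicity theorem for \eqref{IB} together with Theorem \ref{45}) to squeeze $u(\cdot,\cdot+T)$ between two monotone orbits converging to that equilibrium. Both arguments are sound; yours is heavier but yields a genuine byproduct not stated in the paper (the constant $a/b$ is the only positive solution of the stationary Neumann problem), and your equilibrium-uniqueness trick is in fact simpler than the integration-by-parts identity the paper uses in the Dirichlet case, whereas the paper's ODE sandwich is shorter and avoids equilibrium uniqueness altogether.
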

\begin{proof}
	Let $M:=\max\{\frac{a}{b}, \max\limits_{\Omega} u_0 \}$.  It is easy to check that $M$ is a supersolution and $0$ is a lower solution of \eqref{1n}. By the upper and lower solutions method (Theorem \ref{sxj1}), we deduce that \eqref{1n} admits a unique solution $u(x,t)\in C^{0}({\bar\Omega }\times[0,\infty) ) \cap C^{1}({\Omega }\times[0,\infty) )$ satisfying $0\le u(x,t)\le M$ for $x\in\bar{\Omega}$ and $t\in [0,+\infty)$.  
	
	It is well known that, for any given $Z_0>0$, the initial value problem 
	$
	Z^{'}(t)=Z(a-bZ),~Z(0)=Z_{0}
	$
	admits a unique solution $Z(t;Z_0)$ for $t\ge 0$, which satisfies $Z(t)\to \frac{a}{b}$ as $t\to +\infty$. Using Theorem \ref{sm}, we have $u(x,t)>0$, $x\in\bar{\Omega
	}$, $t>0$. For fixed $\delta>0$, define $M=\max\limits_{x\in\bar{\Omega}} u(x,\delta)$, $m=\min\limits_{x\in\bar{\Omega}} u(x,\delta)$, we conclude that
	$$
	Z(t;m)\le u(x,t+\delta) \le Z(t;M),~x\in\bar{\Omega},~t\ge 0
	$$
	by Theorem \ref{o}.
It follows that 
	$
	\lim\limits_{t\to+\infty} u(x,t)=\frac{a}{b} 
	$ uniformly for $x\in \bar{\Omega}$.
\end{proof}
\subsubsection{The parabolic equation with intial value}
In this subsubsection, we consider the discrete model \eqref{52}.

 We consider first the Cauchy problem
\eqref{1pb} with $f$ satisfies \eqref{11} and \eqref{52}. Its equilibrium problem is 
\begin{equation}\label{2pb}
		-\Delta_{{V}} u= f(u),~x\in V
\end{equation}

\begin{Theorem}\label{51b}
Let $G=G(V,E)$ be a graph. Suppose that $u_0\in RV$ satisfying $u_0\ge 0,\not \equiv 0$ on $V$,  $f$ satisfies \eqref{11}, \eqref{52} and 
	\begin{equation}\label{523}
	\frac{f(u)}{u}~\text{is monotone nonincreasing on}~[0,1].
	\end{equation}
	 Then \eqref{1pb} admits a unique  nonnegative global solution $u(x,t)$, \eqref{2pb} admits a unique positive solution $u_{s}(x)\equiv 1$ and
	\begin{equation}
		\lim\limits_{t\to +\infty} u(x,t) = 1~\text{uniformly~for}~x\in V;
	\end{equation}
\end{Theorem}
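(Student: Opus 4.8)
The plan is to run the upper–lower solutions machinery of Section~4 and then reduce the long–time behaviour to a scalar ODE, exploiting that $\Delta_{V}$ annihilates spatially constant functions. \emph{First}, I would settle global solvability, the a priori bound and positivity. Since $\Delta_{V}c\equiv 0$ for every constant $c$, the constant $0$ is a subsolution of \eqref{1pb} (as $f(0)=0$), while $M:=\max\{1,\max_{V}u_{0}\}$ is a supersolution, because $f(M)\le 0$: this is immediate if $M=1$ from \eqref{11}, and follows from \eqref{523} if $M>1$, since then $f(M)/M\le f(1)/1=0$. Applying Theorem~\ref{sxc} on each interval $[0,T]$ gives a unique solution with $0\le u\le M$, and as this bound is independent of $T$ the pieces glue into a unique global solution $u\in C^{1}(V\times[0,\infty))$ with $0\le u\le M$. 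Because $u_{0}\ge,\not\equiv 0$, Theorem~\ref{smc} then upgrades this to $u(x,t)>0$ for all $x\in V$ and $t>0$.

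\emph{Next}, I would identify the equilibrium. The constant $1$ solves \eqref{2pb} since $\Delta_{V}1=0=f(1)$. For uniqueness among positive solutions, let $v>0$ solve \eqref{2pb}; evaluating the equation at a vertex $x_{0}$ realizing $\max_{V}v$ gives $f(v(x_{0}))=-\Delta_{V}v(x_{0})\ge 0$, which together with \eqref{52} and \eqref{523} forces $v(x_{0})\le 1$, so $v$ has range in $(0,1]$. By the symmetry of $\Delta_{V}$ (integration by parts on $V$, no boundary term) one gets $\int_{V}f(v)\,d\mu=-\int_{V}\Delta_{V}v\,d\mu=0$; since $f\ge 0$ on $[0,1]$ and vanishes there only at $0$ and $1$, the nonnegative integrand must be identically zero, so $v(x)\in\{0,1\}$ at every $x$, and positivity yields $v\equiv 1$. (Equivalently, a discrete Picone/Brezis–Oswald computation uses \eqref{523} directly to force any two positive solutions to coincide.) Hence $u_{s}\equiv 1$ is the unique positive solution of \eqref{2pb}.

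\emph{Finally}, I would prove convergence. Fix $t_{1}>0$; by the first step $m:=\min_{V}u(\cdot,t_{1})>0$. Choose $\underline{Z}_{0}\in(0,\min\{m,1\})$ and $\overline{Z}_{0}:=\max_{V}u(\cdot,t_{1})$, and let $\underline{Z},\overline{Z}$ solve the scalar ODE $Z'=f(Z)$ with these data. Because $\Delta_{V}$ kills constants, $\underline{Z}$ and $\overline{Z}$ are spatially constant exact solutions of the graph equation, hence a lower and an upper solution of \eqref{1pb} with initial datum $u(\cdot,t_{1})$; since $\underline{Z}_{0}\le u(\cdot,t_{1})\le\overline{Z}_{0}$, Theorem~\ref{oc} yields $\underline{Z}(t)\le u(x,t_{1}+t)\le\overline{Z}(t)$ for all $x\in V$ and $t\ge 0$. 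By \eqref{11}, \eqref{52} and \eqref{523} the equilibrium $1$ of this ODE is globally attracting on $(0,\infty)$: orbits in $(0,1)$ increase to $1$ (since $f>0$ there and $f'(0)>0$), orbits started at or above $1$ are nonincreasing toward $1$ (since $f\le 0$ there), so $\underline{Z}(t)\to 1$ and $\overline{Z}(t)\to 1$. Squeezing, and using that $V$ is finite, gives $u(x,t)\to 1$ uniformly in $x$.

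The step I expect to be the real content is the identification of $u_{s}\equiv 1$ as the \emph{only} positive equilibrium: this is precisely where \eqref{523} is used, both to confine positive equilibria to $(0,1]$ and, on the Picone route, to force two positive solutions to agree. The remaining ingredients are comparatively soft on a finite graph; in particular the hair-trigger phenomenon, delicate on $\mathbb{R}^{n}$, reduces here to ODE comparison since $u$ can be dominated from below by a spatially homogeneous subsolution. The only bookkeeping to watch is promoting the finite-horizon existence of Theorem~\ref{sxc} to a genuinely global solution and checking that $f$ is Lipschitz on the range $[0,M]$ (automatic from $f\in C^{1}[0,1]$, after extending $f$ past $1$ consistently with \eqref{523} if $u_{0}$ is not already $\le 1$), so that Theorems~\ref{oc} and \ref{sxc} apply.
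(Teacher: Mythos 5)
Your proposal is correct, and the first and last steps (global existence via the constant barriers $0$ and $M$ with Theorem \ref{sxc}, positivity via Theorem \ref{smc}, and squeezing by spatially constant solutions of $Z'=f(Z)$) coincide in substance with the paper: the paper's comparison functions $\epsilon\Psi_0$ and $ku_s$ are themselves constants, so its invocation of Theorems \ref{oc} and \ref{45c} is just a packaged form of your ODE squeeze. Where you genuinely diverge is the uniqueness of the positive equilibrium. The paper first shows $v\le u_s$ for any positive solution $v$ by rewriting the equation as $-\Delta_V(1-v)+\frac{f(v)}{1-v}(1-v)=0$ and applying Theorem \ref{27}, builds the maximal solution via the monotone iteration of Theorem \ref{44c} between $\delta\Psi_0$ and $1$, and then concludes with a Picone-type identity $0=\int_V vu_s\bigl(\frac{f(u_s)}{u_s}-\frac{f(v)}{v}\bigr)d\mu$ in which the monotonicity \eqref{523} is essential. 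You instead confine the range of $v$ to $(0,1]$ by evaluating at a maximum vertex and then use the elementary identity $\int_V f(v)\,d\mu=-\int_V\Delta_V v\,d\mu=0$ together with the sign of $f$ on $[0,1]$ to force $v\equiv 1$ directly; this avoids the monotone iteration and Theorem \ref{44c} altogether and barely uses \eqref{523}, which is arguably cleaner. One shared caveat: both your maximum-point step and the paper's linearization/supersolution steps (e.g.\ taking $k>1$ as a supersolution) tacitly use information about $f$ beyond $u=1$, where \eqref{11}--\eqref{523} say nothing; with an extension where $f$ merely satisfies $f\le 0$ on $(1,\infty)$ and vanishes somewhere, constant equilibria above $1$ could appear, so strictly one should assume $f<0$ for $u>1$ (or $u_0\le 1$). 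You flag this explicitly, the paper does not, so this is not a defect of your argument relative to the paper's.
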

\begin{proof}
 Denote $M:=\max\{1, \max\limits_{V} u_0 \}$.	It is easy to see that $M$ is a supersolution of \eqref{1pb}, $0$ is a subsolution of \eqref{1pb}. By Theorem \ref{sxc},  problem \eqref{1pb} admits a unique solution $u(x,t)$ satisfying $0\le u(x,t) \le M$ for $x\in V$ and $t\ge 0$. 
 
 Noting that $\mu_{0}$ is the smallest eigenvalue of the eigenvalue problem $-\Delta_{V} u=\mu u$ on $V$, and $\Psi_0 $ is the eigenfunction corresponding to $\mu_0$.

 It is easy to see that $1$ is a supersolution to \eqref{2pb} and there exists a sufficiently small $\delta^{'}>0$ such that { $\delta \Psi_0$ is a subsolution of \eqref{2pb} }for all $\delta<\delta^{'}$. We apply Theorem \ref{44} to conclude that \eqref{2pb} has a maximal solution $\hat{u}(x)$ satisfying $\delta\Psi_0 \le \hat{u}\le 1$. 
 
 Next, we show that $\hat{u}$ is the unique positive solution to problem \eqref{2pb}. Suppose that $v$ is a positive solution of \eqref{2pb}. It is easy to check that $v$ satisfies 
 \begin{equation*}
	-\Delta_{{V}}(1-v)+\frac{f(v)}{1-v}(1-v)=0~\text{on~}V.
\end{equation*} 
Thanks to $f^{'}(1)\le 0$, we see that $\frac{f(v)}{1-v}\ge 0$ on $V$. It follows from {Theorem \ref{27}} that $1-v\ge 0$ on $V$. It is easy to see that $1$ is a supersolution of \eqref{2pb} and $0$ is a subsolution of \eqref{2pb}. By Theorem \ref{44c}, there exists a maximal solution $u_s$ to \eqref{2pb} satisfying $0\le u_s \le 1$. Thus, we obtain 
\begin{equation}\label{zx}
	v\le u_s \text{ on~} V.
\end{equation}

Then integration by parts gives
	\begin{equation*}
		\begin{aligned}
			\int\limits_{V} v f({u_s}) d\mu&= \int\limits_{V} v(-\Delta_{V} {u_s}) d\mu			 
			= \int\limits_{ V} -{u_s}\Delta_{{V}} v  d\mu 
			=\int\limits_{{V} } {u_s} f(v) d\mu. 
		\end{aligned}
	\end{equation*}
	Thus, we deduce that 
	\begin{equation*}
		0=\int_{V} vf({u_s})- {u_s}f(v) d\mu=\int_{V} v{u_s}\left(\frac{f({u_s})}{u_s}- \frac{f(v)}{v}   \right) d\mu.
	\end{equation*}
 By \eqref{11} and \eqref{zx}, we deduce that 
$$
		\int_{V} v{u_s}\left(\frac{f({u_s})}{{u_s}}- \frac{f(v)}{v}   \right) d\mu \le 0.
$$
	This implies that $v  \equiv u_{s}$ on $V$. Thus we deduce that the positive solution to \eqref{2pb} is unique. We observe that $1$ is a positive solution to \eqref{2pb}, then $u_{s}\equiv 1$. 
	
 Using Theorem \ref{smc}, we see that $u(x,t)>0$ for $x\in V$, $t>0$. For fixed $T$,  we can find a sufficiently small $0<\epsilon <1$ and a sufficiently large $k>1$ such that 
$
		\epsilon\phi(x) \le u(x,T) \le ku_{s} (x),~x\in V.
$ 
We now consider the following problem
	\begin{equation}\label{58c}
		\begin{cases}
			v_{t}- \Delta_{V} v=f(v)~&\text{in}~V\times (0,+\infty),\\
			v(x,0)=u(x,T)~&\text{on}~V.
		\end{cases}
	\end{equation}
It is easy to check that $\epsilon\phi$ is a subsolution of \eqref{58c} and $ku_{s}$ is a supersolution of \eqref{58c}. By Theorem \ref{sxc}, the problem \eqref{58c} admits a unique solution $v(x,t)=u(x,t+T)$ satisfying $0\le v(x,t)\le M$ for $t\ge 0$, $x\in V$. Let $u_1$ and $u_2$ be solutions to \eqref{58c} with $v(x,0)=\epsilon\phi$ and $v(x,0)=ku_{s}$, respectively. We apply Theorem \ref{oc} to deduce that 
$$
		u_{1}(x,t)\le v(x,t) \le u_{2}(x,t)~\text{for}~x\in V~\text{and}~t\ge 0.
$$
	By Theorem \ref{45c} and the fact that $u_{s}$ is the unique positive solution of \eqref{2pb}, we deduce that 
$
		u_{1}(x,t)\to u_{s}(x)~\text{and}~u_{2}(x,t)\to u_{s}(x) 
$
	uniformly for $x\in V$ as $t\to +\infty$. Thus, we see that $
		u(x,t)=v(x,t-T)\to u_{s}(x)
	$ uniformly for $x\in V$ as $t\to +\infty$.
\end{proof}

Next, we consider the Cauchy problem on graphs with $f$ satisfies \eqref{11} and \eqref{b}.

We begin with the following elementary lemma.
\begin{Lemma}\label{55}
Let $G=G(V,E)$ be a graph. Let $\gamma>0$ be a constant. Suppose that $f\in C^{1}[0,\gamma]$ and $f(0)=0$. Let $u(x,t)$ be a solution to $u_{t}-\Delta_{{V}} u= f(u)$ on $V$ and let $f(u)<0$ in $(0,\gamma]$. If $u(x,0)\in [0,\gamma]$, then $\lim\limits_{t\to \infty} u(x,t)=0 $ uniformly on $V$.
\end{Lemma}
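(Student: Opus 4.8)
The plan is to sandwich $u$ between the constant function $0$ and the spatially homogeneous solution of the associated ordinary differential equation, and then let time go to infinity. \textbf{Step 1 (global bounds).} First I would observe that the constant functions $0$ and $\gamma$ are respectively a lower and an upper solution of the Cauchy problem $u_{t}-\Delta_{V}u=f(u)$, $u(\cdot,0)=u(\cdot,0)$: indeed $0_{t}-\Delta_{V}0=0=f(0)$, while $\gamma_{t}-\Delta_{V}\gamma=0\ge f(\gamma)$ since $f(\gamma)<0$, and by hypothesis $0\le u(x,0)\le\gamma$. Since $f\in C^{1}[0,\gamma]$, it is Lipschitz on $[0,\gamma]$, so Theorem \ref{sxc} applies with $\underline{\eta}=0$, $\overline{\eta}=\gamma$ and shows that the solution is globally defined and satisfies $0\le u(x,t)\le\gamma$ for all $x\in V$ and all $t\ge 0$.

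\textbf{Step 2 (the ODE comparison function).} Next I would introduce $Z(t)$, the solution of $Z'(t)=f(Z(t))$ with $Z(0)=\gamma$. Because $f<0$ on $(0,\gamma]$, $Z$ is strictly decreasing while it stays positive; because $f(0)=0$ and $f\in C^{1}$ (so the ODE has unique solutions), $Z(t)$ cannot reach $0$ in finite time, hence $Z(t)>0$ and $0<Z(t)\le\gamma$ for all $t\ge 0$. Thus $Z$ is monotone and bounded, so $Z(t)\downarrow L$ for some $L\in[0,\gamma)$. If $L>0$, then $Z'(t)=f(Z(t))\to f(L)<0$, which is incompatible with $Z(t)$ converging to a finite limit; therefore $L=0$, i.e.\ $Z(t)\to 0$ as $t\to\infty$.

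\textbf{Step 3 (sandwiching and passing to the limit).} Viewing $Z(t)$ as a function on $V$ that is constant in $x$, one has $Z_{t}-\Delta_{V}Z=Z'(t)=f(Z)$, so $Z$ is in particular an upper solution of the Cauchy problem, with $Z(0)=\gamma\ge u(x,0)$. Both $u$ and $Z$ take values in $[0,\gamma]$, on which $f$ is Lipschitz, so Theorem \ref{oc} yields $u(x,t)\le Z(t)$ for all $x\in V$, $t\ge 0$; likewise the constant $0$ is a (lower) solution with $0\le u(x,0)$, giving $u(x,t)\ge 0$. Hence $0\le u(x,t)\le Z(t)$ for all $x\in V$ and $t\ge 0$, and since $Z(t)\to 0$ while the bound $Z(t)$ is independent of $x$ (and $V$ is finite in any case), we conclude $u(x,t)\to 0$ uniformly on $V$.

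\textbf{Expected main obstacle.} There is no genuinely hard step; the only points requiring care are invoking Theorems \ref{sxc} and \ref{oc} with the correct constants $\underline{\eta}=\sigma=0$ and $\overline{\eta}=A=\gamma$ so that the needed Lipschitz estimate is literally available on the relevant range, and the elementary phase-line argument in Step 2 showing that $Z(t)$ tends to $0$ and not to some positive equilibrium, which is exactly where the hypothesis that $f<0$ on \emph{all} of $(0,\gamma]$ (not merely near $\gamma$) is used.
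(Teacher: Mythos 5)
Your proposal is correct and follows essentially the same route as the paper: compare $u$ with the spatially constant ODE solution starting from $\gamma$, show that this ODE solution tends to $0$, and invoke the ordering theorem (Theorem \ref{oc}) to sandwich $0\le u(x,t)\le Z(t)$. The only cosmetic differences are that you justify $Z(t)\to 0$ by a monotone-limit/phase-line argument instead of the paper's implicit formula $t=\int_{v}^{\gamma}-\frac{du}{f(u)}$, and you add the (harmless, in fact clarifying) preliminary bound $0\le u\le\gamma$ via Theorem \ref{sxc}.
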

\begin{proof}
	Let $v$ be the solution of the problem $
			v_{t}=f(v),
	v(x,0)=\gamma.$	
Then $t=\int_{v}^{\gamma} -\frac{1}{f(u)} du $. It's easy to see that  $v\to 0$ as $t\to+\infty$. Due to $u(x,0)\ge 0$, by Theorem \ref{oc}, we see that $u(x,t)\ge 0 $ on $V\times [0,+\infty)$. Thanks to $v(x,0)\ge u(x,0)$, we use Theorem \ref{oc}  again to conclude that $v(x,t)\ge u(x,t)$ on $V\times [0,+\infty)$, and hence that $u(x,t)\to 0$ as  $t\to +\infty$ uniformly for $x\in V$.
\end{proof}

For any $\rho\in [0,\alpha)$, we define 
\begin{equation}\label{54}
	s(\rho):=\sup\{\frac{f(u)}{u-\rho}:u\in (\alpha,1) \} 
\end{equation}
and denote $[m]^{+}:=\max\{m,0\}$.

\begin{Theorem}
Let $G=G(V,E)$ be a graph.	Assume that $f(u)$ satisfies \eqref{11} and \eqref{b}. Then the problem \eqref{1pb} admits a unique nonnegative global solution $u(x,t)$. Furthermore, if $u_0$ satisfies 
	\begin{equation}\label{2c}
		e^{\frac{1}{2}}[u_0-\rho]^{+} \mu(x)< \alpha-\rho~\text{ for all}~x\in V, \text{and~ some}~ \rho\in [0,\alpha).
	\end{equation} 
then $
\lim\limits_{t\to +\infty} u(x,t)=0~\text{uniformly for}~x\in V;
$
if $u_0$ satisfies {$u_0> \alpha$} on $V$, then \begin{equation}\label{zcz}
	\lim\limits_{t\to +\infty} u(x,t)=1~\text{uniformly for}~x\in V.
\end{equation}
\end{Theorem}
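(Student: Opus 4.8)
The proof naturally splits into global well-posedness, extinction under \eqref{2c}, and propagation under $u_0>\alpha$; I sketch each in turn.

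\emph{Global well-posedness.} I would first extend $f$ to $[0,+\infty)$ so that it stays locally Lipschitz and $f(u)\le 0$ for $u\ge 1$ (for instance $f(u)=0$ there), and set $M:=\max\{1,\max_{V}u_0\}$. Then the constant $M$ is a supersolution and $0$ a subsolution of \eqref{1pb}, so Theorem \ref{sxc} gives, for every $T>0$, a unique solution on $V\times[0,T]$ with $0\le u\le M$; uniqueness lets these be glued into a global nonnegative solution. Since $V$ is finite, \eqref{1pb} is a finite ODE system with locally Lipschitz field, and the a priori bound $0\le u\le M$ is precisely what excludes finite-time blow-up.

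\emph{Extinction.} The plan is to reduce everything to Lemma \ref{55}. Fix $\rho\in[0,\alpha)$ as in \eqref{2c}. If $u_0\le\rho$ on $V$ we are already done by Lemma \ref{55} (with any $\gamma\in[\max_{V}u_0,\alpha)$), so assume $[u_0-\rho]^+\not\equiv 0$. From \eqref{11}, \eqref{b} and the definition \eqref{54} one reads off the one-sided bound $f(u)\le s(\rho)\,[u-\rho]^+$ valid for all $u\ge 0$: on $(\alpha,1)$ it is the definition of $s(\rho)$, on $[\rho,\alpha]\cup\{1\}$ one has $f(u)\le 0\le s(\rho)(u-\rho)$, on $[0,\rho)$ one has $f(u)\le 0=s(\rho)[u-\rho]^+$, and $f\le 0$ on $[1,+\infty)$ after the extension. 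Consequently $v:=u-\rho$ is a subsolution of
\begin{equation*}
	w_t-\Delta_{V}w=s(\rho)[w]^+,\quad x\in V,\ t>0,\qquad w(x,0)=[u_0(x)-\rho]^+ .
\end{equation*}
The unique solution $W$ of this problem is nonnegative (comparison with $0$ via Theorem \ref{maxc}), hence $[W]^+=W$ and $W$ solves the linear equation $W_t-\Delta_{V}W=s(\rho)W$; by Theorem \ref{3.3} it equals $W(x,t)=e^{s(\rho)t}\sum_{y\in V}p(x,y,t)[u_0(y)-\rho]^+$ with $p$ the heat kernel appearing in Theorem \ref{smc}. Theorem \ref{oc} then gives $u(x,t)\le\rho+W(x,t)$ on $V\times[0,+\infty)$. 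Inserting \eqref{2c} and controlling $\sum_y p(x,y,t)[u_0(y)-\rho]^+$ via the heat-kernel identities ($p\ge 0$, $\sum_y p(x,y,t)=1$, together with the short-time behaviour of $p(x,y,t)/\mu(y)$), I expect to produce a time $t_1\ge 0$ with $\max_{x\in V}u(x,t_1)<\alpha$; this is exactly the step where the precise constant $e^{1/2}$ and the vertex weight $\mu(x)$ of \eqref{2c} enter, and it is the main obstacle. Once $t_1$ is found, $\tilde u(x,t):=u(x,t_1+t)$ solves \eqref{1pb} with $\tilde u(\cdot,0)=u(\cdot,t_1)\in[0,\gamma]$ for $\gamma:=\max_x u(x,t_1)<\alpha$, and since $f<0$ on $(0,\gamma]$, Lemma \ref{55} yields $u(x,t)\to 0$ uniformly on $V$.

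\emph{Propagation.} Here the plan is elementary. Since $V$ is finite and $u_0>\alpha$, $m_0:=\min_{V}u_0\in(\alpha,1]$ (with $u\le 1$ after the extension, and the case $m_0=1$ being trivial). Let $\underline v$ solve the scalar ODE $\underline v'=f(\underline v)$, $\underline v(0)=m_0$; as $f>0$ on $(\alpha,1)$, $\underline v$ increases, stays in $(\alpha,1)$, and $\underline v(t)\uparrow 1$. Regarded as a spatially constant function, $\underline v$ is a subsolution of \eqref{1pb} with $\underline v(\cdot,0)\le u_0$, so Theorem \ref{oc} gives $u(x,t)\ge\underline v(t)$ on $V\times[0,+\infty)$; combined with $u\le 1$ and the finiteness of $V$, this proves \eqref{zcz}, i.e.\ $u(x,t)\to 1$ uniformly on $V$.
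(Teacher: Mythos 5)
Your overall architecture coincides with the paper's: the same upper/lower solution pair $0$ and $M=\max\{1,\max_V u_0\}$ with Theorem \ref{sxc} for global existence, the same reduction $f(u)\le s(\rho)[u-\rho]^{+}$ with comparison against the linear problem $w_t-\Delta_V w=sw$, $w(\cdot,0)=[u_0-\rho]^{+}$, and the same scalar ODE comparison for the propagation case. However, in the extinction part there is a genuine gap, and you say so yourself: you never produce the time $t_1$ at which $\max_{x}u(x,t_1)<\alpha$, which is precisely where hypothesis \eqref{2c} — with its constant $e^{1/2}$ and the weight $\mu(x)$ — must be spent. The paper closes this step by evaluating the comparison at the \emph{specific} time $t=\tfrac{1}{2s}$, so that $e^{st}=e^{1/2}$, and then estimating the heat-kernel sum through its eigenfunction expansion,
\begin{equation*}
u\bigl(x,\tfrac{1}{2s}\bigr)\le e^{\frac12}\sum_{y\in V}\sum_{j=0}^{N-1}e^{-\mu_j\frac{1}{2s}}\Psi_j(x)\Psi_j(y)\mu(y)\,[u_0(y)-\rho]^{+}+\rho\le e^{\frac12}[u_0(x)-\rho]^{+}\mu(x)+\rho<\alpha ,
\end{equation*}
after which Lemma \ref{55} applies from time $\tfrac{1}{2s}$ onward. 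The tools you propose instead (nonnegativity and stochasticity of $p$, i.e.\ $\sum_y p(x,y,t)=1$, plus short-time behaviour) do not obviously close the step: stochasticity only gives $u(x,t)\le\rho+e^{st}\max_y[u_0(y)-\rho]^{+}$, and feeding \eqref{2c} into this yields at $t=\tfrac{1}{2s}$ only $u\le\rho+(\alpha-\rho)/\min_V\mu$, which is $<\alpha$ only when $\mu\ge 1$ on $V$; the pointwise weight $\mu(x)$ in \eqref{2c} cannot be recovered this way. So the missing step is not a routine verification but the actual content of the extinction statement.

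A secondary point: in the propagation case your choice of extension $f\equiv 0$ on $[1,+\infty)$ works against you. With that extension, a spatially constant datum $c>1$ is a steady state, so you cannot obtain an upper barrier tending to $1$, and your parenthetical claim that ``$u\le 1$ after the extension'' is false in finite time when $\max_V u_0>1$. The paper instead brackets $u$ between the two ODE solutions $v(t;m)$ and $v(t;M)$ with $\alpha<m<u_0<M$ and lets both tend to $1$ (implicitly using $f<0$ above $1$); if you insist on extending $f$, extend it negatively beyond $1$, or restrict to $u_0\le 1$, and then your lower-barrier argument combined with such an upper barrier does give \eqref{zcz}.
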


\begin{proof}
Let $M:=\max\{1, \max\limits_{V} u_0 \}$. It is easily seen that $M$ is an upper solution and $0$ is a lower solution of of \eqref{1pb}. We apply Theorem \ref{sxc} to deduce that \eqref{1pb} admits a unique nonnegative global solution $u(x,t):$ $0\le u \le M$ for $x\in V$ and $t\ge 0$.	Fix $\rho\in [0,\alpha)$. By the proof of Theorem \ref{smc}, the problem
	\begin{equation*}
		\begin{cases}
			w_{t}-\Delta_{{V}} w=sw, ~x\in V,~t>0,\\
			w(x,0)=[u(x,0)-\rho]^{+},~x\in V
		\end{cases}
	\end{equation*}
admits a unique solution 
\begin{equation}\label{3c}
	w(x,t)=e^{st} \sum\limits_{y \in V}p(x,y,t)[u(y,0)-\rho]^{+} \mu(y).
\end{equation} 
It follows from Theorem \ref{smc} that $w\ge 0$ on $V\times[0,+\infty)$ and hence that $w\equiv [w]^{+}$. In view of $f(u)\le 0$ on $[0,\alpha]$, by \eqref{54}, we deduce that $f(u)\le s[u-\rho]^{+}$. 

Setting $v(x,t):=u(x,t)-\rho$. Then 
$$
v_{t}-\Delta_{{V}} v-s[v]^{+}\le u_{t}-\Delta_{{V}} u -f(u)=0=w_{t}-\Delta_{{V}} w-s[w]^{+}.
$$
Using Theorem \ref{smc}, we have $v(x,t)\le w(x,t)$ so that $u(x,t)\le w(x,t)+\rho$. By \eqref{2c} and \eqref{3c}, we conclude that 
\begin{equation*}
	\begin{aligned}
		u(x,\frac{1}{2s}) &\le e^{\frac{1}{2}} \sum\limits_{y\in V} p(\frac{1}{2s},x,y)[u(y,0)-\rho]^{+}\mu(y)+ \rho\\
		&=e^{\frac{1}{2}} \sum\limits_{y\in V} \sum\limits_{j=0}^{N-1} e^{-\mu_{j}\frac{1}{2s}}\Psi_{j}(x)\Psi_{j}(y)\mu(y)[u(y,0)-\rho]^{+}+\rho \\
		&\le e^{\frac{1}{2}}[u(x,0)-\rho]^{+}\mu(x) +\rho \\
		&< \alpha  .
	\end{aligned}	
\end{equation*}
Then by Lemma \ref{55}, we obtain $u(x,t)\to 0$ uniformly for $x\in V$.

Suppose that $u_0(x)> \alpha$ on $V$. We use \eqref{1pb} to obtain that $u(x,t)$ satisfies
$$
		u_{t}-\Delta_{{V}} u-\frac{f(u)}{u} u=0,~x\in V,~t>0, 
		u(x,0)=u_0(x)\ge 0,~x\in V,
$$ 
Due to $0\le u\le M$, by \eqref{b}, for any given $T>0$, $\frac{f(u)}{u}$ is bounded for $x\in V$ and $t\in [0,T]$. Thus, by Theorem \ref{smc}, $u(x,t)>0$ for $x\in V$ and $t>0$. Choosing $m,M\in\mathbb{R}$ such that {$\alpha<m<u_0<M$} on $V$. It is well-known that the initial problem 
$$	
		v_{t}=f(v),
		v(0)=q
$$
admits a unique solution $v(t;q)$ satisfying 
$
	\lim\limits_{t\to +\infty} v(t;q)=1,
$
where $q=m$ or $M$.
We use Theorem \ref{oc} to conclude that $v(t;m)\le u(x,t)\le v(t;M)$ for $x\in V$ and $t>0$. Therefore, we obtain \eqref{zcz}.
\end{proof}

Finally, we give an example and numerical experiments to demonstrate Theorem \ref{51}.

\begin{example}\label{61}
	Choose a graph $\overline{\Omega}$ satisfying $\Omega=\{x_1 ,x_2, x_3 \}$ and $\partial{\Omega}=\{x_4, x_5 \}$ whose vertices are linked as the following figure with a weight $\omega$ satisfying 
$
		\omega_{x_i x_j}=\begin{cases}
			1,~x_i \sim x_j,\\
			0,~x_i \not= x_j
		\end{cases}
$
for $i,j=1,2,\cdots,5$. 

\begin{center}
\begin{tikzpicture}[>=stealth, scale=1.2]
	
	\draw(0,0)node{\hspace{-.1cm}$\circ$}--(1,0)node{$\bullet$}--(2,0)node{$\bullet$}--(3,0)node{\hspace{.1cm}$\circ$};
	\draw(1,0)--(1.5,0.867)node{$\bullet$}--(2,0);
	\draw(0,0)node[below]{$x_4$}(1,0)node[below]{\hspace{-.1cm}$x_1$}(2,0)node[below]{\hspace{.3cm}$x_3$}(3,0)node[above]{$x_5$} (1.5,0.867)node[above]{$x_2$};
\end{tikzpicture}
\end{center}
Suppose that $u(x,t)$ is the unique global positive solution of \eqref{1p} with $b=1$, $u_{0}(x_1)=8$, $u_{0}(x_2)=1$, $u_{0}(x_3)=0.5$ and $\mu(x)=\sum\limits_{y\sim x} w_{xy}$ for $x\in \bar{\Omega}$. 
  Next, we calculate the principle eigenvalue $\lambda_{1}$ of $-\Delta_{\Omega}$ on $\overline{\Omega}$. 
  We need solve the following eigenvalue problem
\begin{equation}\label{1}
		-\Delta_{\Omega}\phi=\lambda \phi,~x\in \Omega,~\phi(x)=0,~x\in \partial\Omega.
\end{equation}

It is easy to check that the eigenvalues of the problem \eqref{1} are 
\begin{equation*}
	\lambda_{1}=\frac{5-\sqrt{13}}{6} ,~\lambda_{2}=\frac{5+\sqrt{13}}{6},~\lambda_{3}=\frac{4}{3}.
\end{equation*}

If we choose $a=0.1$, then $a<\lambda_{1}$. Therefore, by Theorem \ref{51}, we see that 
\begin{equation*}
	\lim\limits_{t\to +\infty} u(x,t)=0
\end{equation*}
uniformly for $x\in \bar{\Omega}$. The numerical experiment result is shown in Figure \ref{fig1} (a).
\end{example}

Besides, if we choose $a=1.8$, then $a>\lambda_{1}$. Thus, by Theorem \ref{51}, we deduce that 
\begin{equation*}
	\lim\limits_{t\to +\infty} u(x,t) = u_{s}(x)~\text{uniformly~for}~x\in \bar{\Omega},
\end{equation*}
where $u_{s}(x)$ is the unique positive solution to 
	\begin{equation*}\label{}
	\begin{cases}
		- \Delta_{{\Omega}} u=u(1.8-u)~&\text{in}~\Omega,\\
		u=0~&\text{on}~{\Omega}.
	\end{cases}
\end{equation*}
The numerical experiment result is shown in Figure \ref{fig1}  (b).

\begin{figure*}[!t]
	\centering
	\subfigure[Extinction] {\includegraphics[height=2in,width=3in,angle=0]{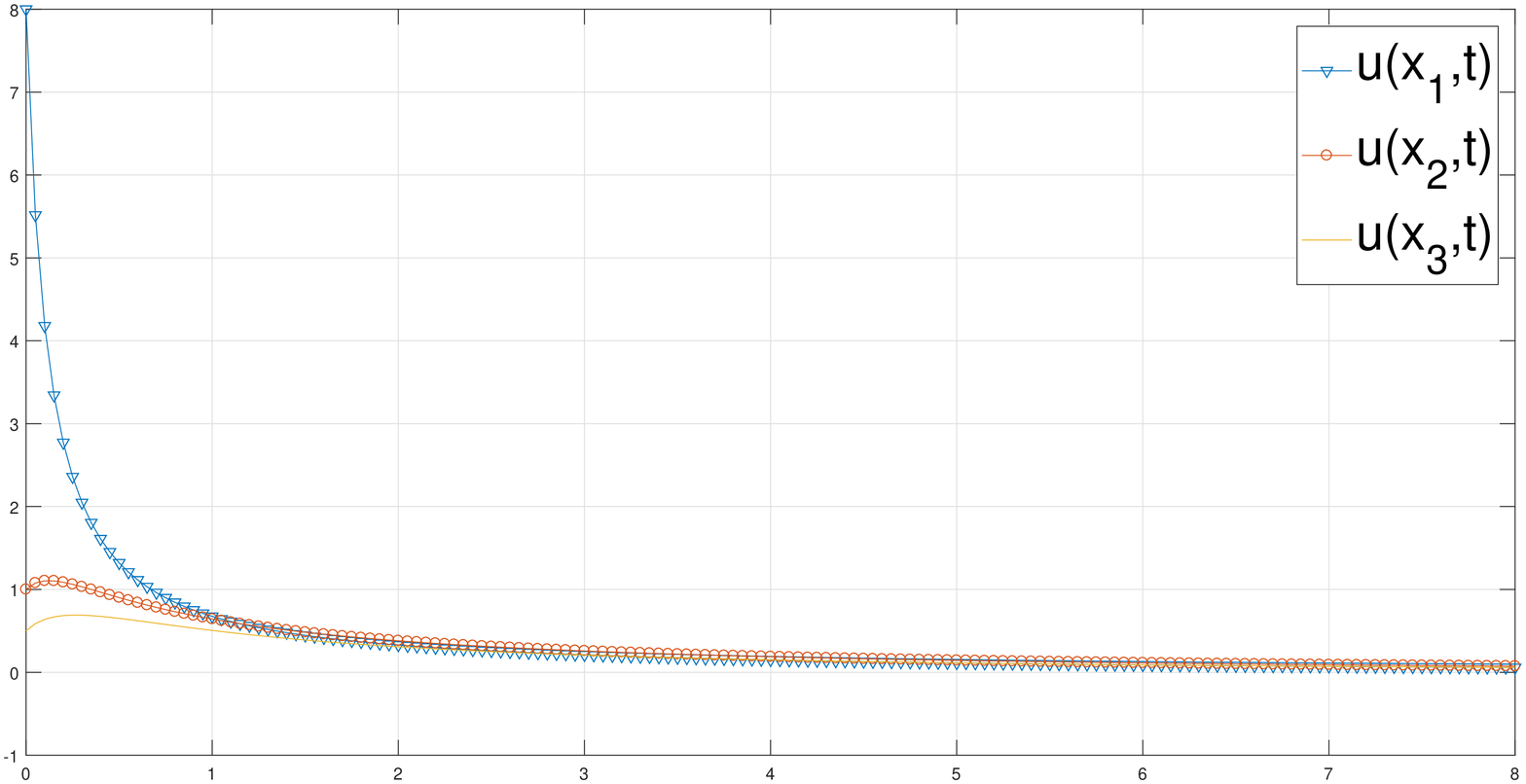}}
	\subfigure[Establishment] {\includegraphics[height=2in,width=3in,angle=0]{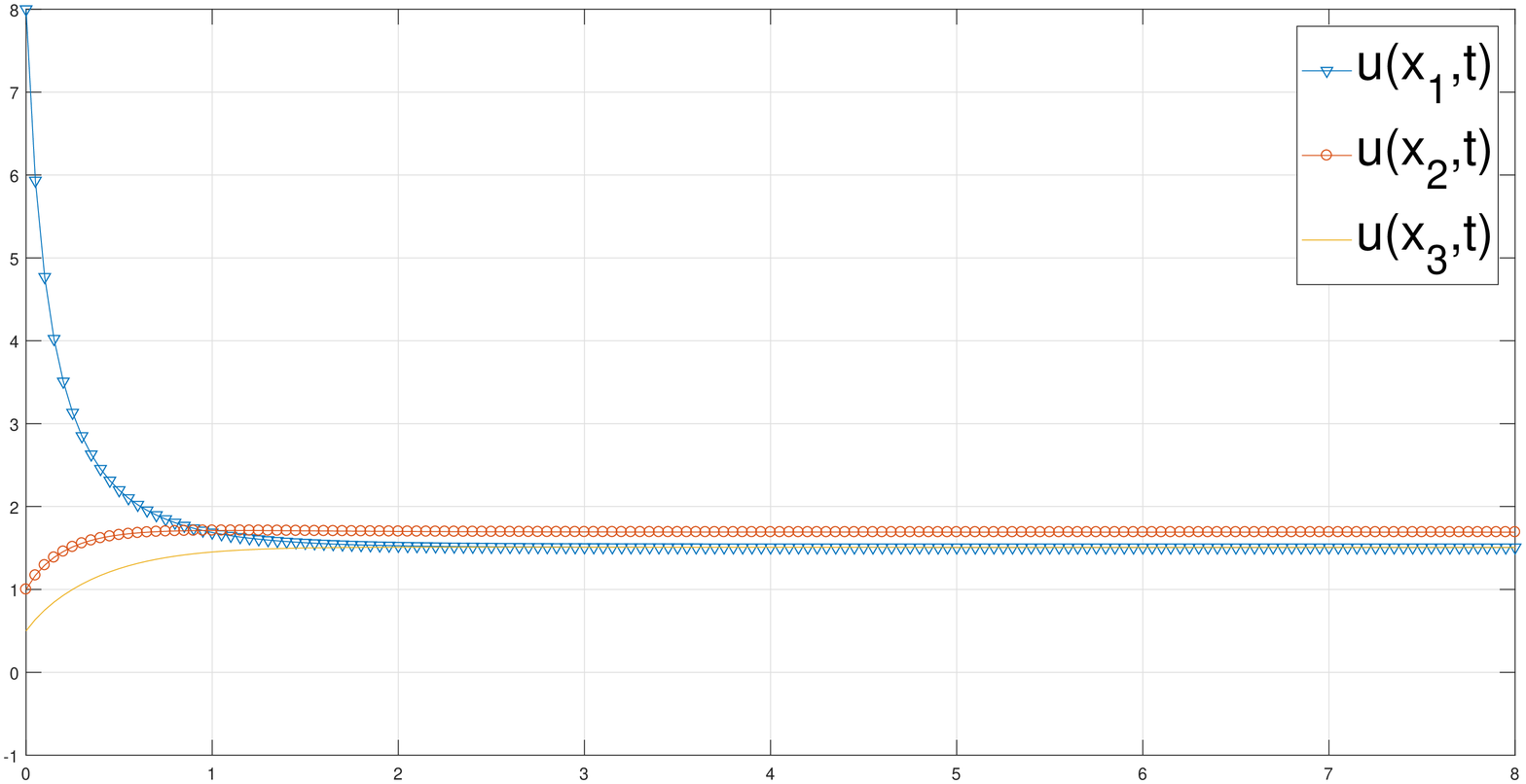}}
	\caption{}
	\label{fig1}
\end{figure*}

\begin{proof}[Acknowledgements.]\renewcommand{\qedsymbol}{}	
		The authors would like to thank the anonymous Referees for their valuable comments
	which helped to improve the manuscript.
\end{proof}

\end{document}